\theoremstyle{definition}
\newtheorem{defn}{Definition}[section]
\newtheorem{conj}{Conjecture}[section]
\newtheorem{ex}{Example}[section]
\newtheorem{thm}{Theorem}[section]
\newtheorem{prop}{Proposition}[section]
\newtheorem{cor}{Corollary}[section]
\newtheorem{lemma}{Lemma}[section]
\newtheorem{rmk}{Remark}[section]
\newcommand{\zz}{\mathbb{Z}}
\newcommand{\nn}{\mathbb{N}}
\newcommand{\supp}{\text{supp}}
\newcommand\stacklongleftrightarrow[1]{%
    \mathrel{{\stackon[4pt]{$\longleftrightarrow$}{$\scriptscriptstyle#1$}}}}
\title{Mixed Dimer Configuration Model in Type $D$ Cluster Algebras II: Beyond the Acyclic Case}
\author{Libby Farrell, Gregg Musiker, and Kayla Wright}
\date{\today}
\begin{document}

\maketitle

\begin{abstract}
This is a sequel to the second and third author's Mixed Dimer Configuration Model in Type $D$ Cluster Algebras where we extend our model to work for quivers that contain oriented cycles. Namely, we extend a combinatorial model for $F$-polynomials for type $D_n$ using dimer and double dimer configurations. In particular, we give a graph theoretic recipe that describes which monomials appear in such $F$-polynomials, as well as a graph theoretic way to determine the coefficients of each of these monomials. To prove this formula, we provide an explicit bijection between mixed dimer configurations and dimension vectors of submodules of an indecomposable Jacobian algebra module.
\end{abstract}

\begin{section}{Introduction}
After the positivity conjecture for the coefficients of Laurent polynomials for cluster variables was resolved in \cite{LeeSchiffler, ghkk}, many researchers have worked on trying to provide combinatorial interpretations for these coefficient sequences. Many particular classes of cluster algebras have been studied with this goal in mind. For example, cluster algebras coming from surfaces, first defined by \cite{fst}, have combinatorial interpretations for the cluster variables \cite{ms, mswpos}. In their model, each cluster variable has an associated graph called a snake graph and then the cluster expansion is given by some weighted generating function indexed over matchings on the snake graph. Since defining these snake graphs for cluster algebras from surfaces, many others have studied when such a construction holds in other contexts. For example, snake graphs have been defined for cluster-like algebras called quasi-cluster algebras arising from non-orientable surfaces \cite{w19}, cluster algebras from unpunctured orbifolds \cite{bk20} and super cluster algebras \cite{moz21}. \allowbreak \vspace{1em}

In this paper, we return to a classical case - cluster algebras of type $D_n$ or in other words, cluster algebras arising from a once-punctured $n$-gon. There are a few reasons that this is of interest. Firstly, there are some limitations to the snake graph formulation of cluster algebras from surfaces. For example, oftentimes there are nontrivial coefficients in the Laurent expansions for cluster variables that are given by the Euler characteristic of the quiver Grassmannian that are not recorded in a single snake graph. In addition to this, there are deep representation theoretic connections with the lattice of perfect matchings of snake graphs and the submodule lattice of a fixed indecomposable representation representation of the quiver of a given dimension vector that could use further exploring. For instance, a connection to between these lattices and the weak Bruhat order was recently discovered in \cite{cs21}.\allowbreak
\vspace{1em}

Another point of interest is that shedding light on the $D_n$ case can help us to study cluster algebras associated to coordinate rings of the Grassmannian. In these cluster algebras, a finite subset of cluster variables given by Pl\"ucker coordinates, admit a graphical combinatorial interpretations. More specifically, one can associate a planar bicolored graph embedded in a disk known as a plabic graph and the Laurent expansion is given by (almost) perfect matchings of this plabic graph as in \cite{lam2015dimers, marsh2016twists, muller2017twist, bkm16}.  However, combinatorial interpretations beginning with examples such as cluster algebras associated to the Grassmannian of $3$-planes in $6$-space, which is actually the type $D_4$ cluster algebra, still lack such interpretations.\allowbreak
\vspace{1em}

With the above as our inspiration, we further explore the connection between dimer configurations, representation theory and cluster algebras with the goal of providing a combinatorial interpretation for Laurent expansions of cluster variables that utilizes a mixture of dimer configurations and double dimer configurations. We ramp up previous work on quivers of type $D_n$ to the more complicated representation theoretic setting of allowing cycles in our quiver. More specifically, we focus on a single and double dimer configuration interpretation of the F-polynomial associated to a cluster variable or module over the associated Jacobian algebra. We provide a weighted generating function in terms of dimers and double dimers on a certain graph to give the F-polynomial for a particular cluster variable. We obtain the exact monomials by creating a bijection between these dimers and particular dimension vectors of submodules of a fixed indecomposable Jacobian algebra module and the coefficients are given by an Euler characteristic of the space of possible submodules with this given dimension vector. 

\allowbreak
\vspace{1em}

We begin this paper by reviewing cluster algebras from surfaces and representations of quivers in type $D_n$ in Section \ref{section:prelim}. We then describe our dimer theoretic interpretation of the F-polynomial in Section \ref{section:dimers} and our main result is Theorem \ref{thm:main} found in Section \ref{section:maintheorem}. Our results depend on a classification of the possible crossing vectors that can appear in type $D_n$ cluster algebras (which are notably no longer in bijection with positive roots when the quiver contains an oriented cycle). For a full catalog of such vectors, see Appendix \ref{section:d-vectors}.\allowbreak
\vspace{1em}

{\bf Acknowledgements:} The authors would like to thank Aaron Chan for helpful insights into the representation theoretic side of our paper as well as the support of the NSF, grants DMS-1745638 and DMS-1854162.
\end{section}

\begin{section}{Preliminaries}\label{section:prelim}
This section split into three subsections: Section \ref{subsection:surfacemodel} which discusses the surface model for type $D_n$ cluster algebras i.e. tagged triangulations of once-punctured $n$-gons and their associated type $D_n$ cluster algebras, Section \ref{subsection:principalcoefficients} which defines the $F$-polynomial associated to a cluster variables and Section \ref{subsection:alg} which defines the relevant representation theoretic notions we will need throughout the paper.

\begin{subsection}{Cluster Algebras from Punctured Surfaces}\label{subsection:surfacemodel}
In this section, we give a brief review of the cluster algebra structure on triangulated surfaces as defined by Fomin, Shapiro and Thurston in \cite{fst}. Since our focus is type $D_n$ cluster algebras, our exposition in this section will emphasize the role of punctures following both \cite{fst} and \cite{lf16}. More specifically, we will focus on the once-punctured disk; a helpful exposition of material for the single puncture case is given in \cite{dge2021}.

\begin{defn}\label{defn:markedsurface}
A \textbf{marked surface} is a pair $(S,M)$ where $S$ is a connected oriented Riemann surface and $M$ is a finite set of marked points such that there is at least one marked point on every boundary component of $S$. If a marked point is on the interior of $S$, we call it a puncture. 
\end{defn}

\begin{defn}\label{defn:arc}
An \textbf{arc} $\gamma$ is a curve in $S$, considered up to isotopy, such that
\begin{enumerate}
    \item its endpoints are in $M$;
    \item $\gamma$ is disjoint from $\partial S$, except for possibly its endpoints;
    \item $\gamma$ does not cut an unpunctured monogon or an unpunctured bigon.
\end{enumerate}
\end{defn}

We say that two arcs $\gamma, \delta$ are \textbf{compatible} if, up to isotopy, they do not intersect. More formally, let $e(\gamma, \delta)$ denote the minimal number of crossings between any isotopic representative of $\gamma$ and $\delta$. Then $\gamma$ and $\delta$ are compatible if $e(\gamma, \delta) = 0$. Maximal sets of pairwise compatible arcs are called \textbf{ideal triangulations} of $(S,M)$. For an example, see the right side of Figure \ref{fig:quiverfromtriangulation} for an ideal triangulation of the once-punctured pentagon. The elements of an ideal triangulation $T$, called ideal triangles, may not always have three distinct sides. Namely, we may have the case of triangles called self-folded triangles. We refer to the inner arc of a self-folded triangle as a radius and the outer edge wrapping around the puncture enclosing the radius as a loop. In Figure \ref{fig:quiverfromtriangulation}, the arcs 4 and 5 create a self-folded triangle, where 5 is the radius and 4 is the loop.\allowbreak
\vspace{1em}

Ideal triangulations are connected by ``quadrilateral flips;'' where the quadrilateral flip for any arc $\gamma \in T$ is given by the unique arc $\gamma' \neq \gamma$ that completes $T \setminus \{\gamma\}$ to a triangulation. For example, the resulting triangulation on the right in Figure \ref{fig:mutatedquiverfromtriangulation} is the mutation of arc 3 in Figure \ref{fig:quiverfromtriangulation}. This ``quadrilateral flip'' provides the cluster structure on triangulations of surfaces. In order to define the cluster structure, we associate a directed graph called a quiver to a triangulation as follows.

\begin{figure}
    \centering
    \includegraphics[width=.8\textwidth]{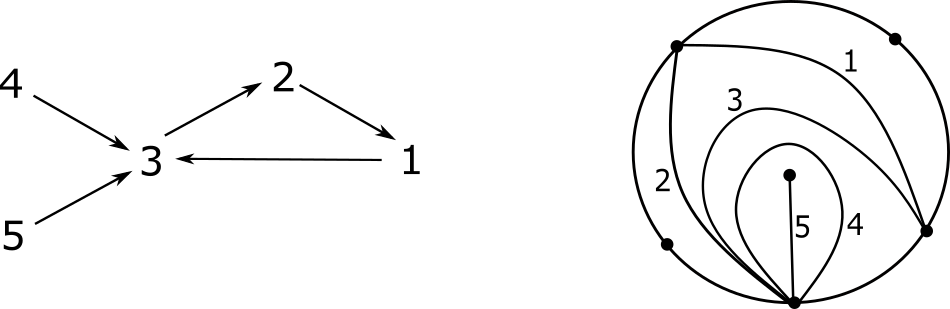}
    \caption{The quiver associated to a triangulation of the once-punctured pentagon.}
    \label{fig:quiverfromtriangulation}
\end{figure}

\begin{defn}
Let $T = \{\tau_1, \tau_2, \dots, \tau_n\}$ be an ideal triangulation of $(S,M)$, we define a \textbf{quiver} $Q_T = (Q_0, Q_1)$ with the vertex set $Q_0 = \{1,2, \dots, n\}$ such that each vertex $i$ is in correspondence with the arcs $\tau_i \in T$ and with arrow set $Q_1$ drawn in clockwise order for each ideal triangle of $T$. Given a self-folded triangle with radius $r$ and loop $\ell$, draw arrows connecting $r$ and $\ell$ to the adjacent arc(s), but not connecting $r$ to $\ell$. 
\end{defn}

To each arc $\tau_i$ in a triangulation $T$, we can associate an indeterminate $x_i$ to arc $\tau_i \in T$. Let $X_T = \{x_1, \dots, x_n\}$ associated to a triangulation $T$, then we call $(X,Q) = (X_T, Q_T)$ a \textbf{seed}.

\begin{figure}
    \centering
    \includegraphics[width=.95\textwidth]{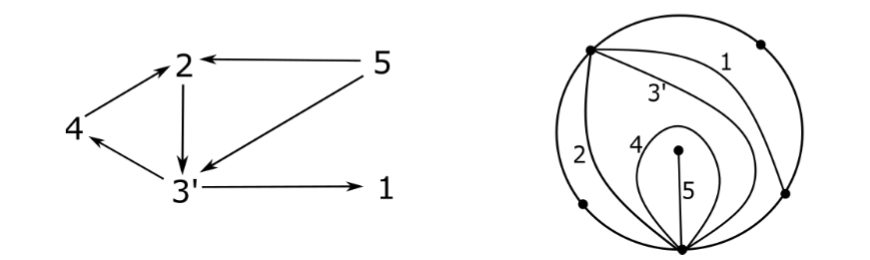}
    \caption{The mutation in direction 3 of quiver and triangulation from Figure \ref{fig:quiverfromtriangulation}.}
    \label{fig:mutatedquiverfromtriangulation}
\end{figure}

\begin{ex} \label{ex:quiverandtriangulation}
Consider the once-punctured pentagon triangulation on the right of Figure \ref{fig:quiverfromtriangulation}. The associated quiver is given by the quiver on the left of Figure \ref{fig:quiverfromtriangulation}.
\end{ex}

\begin{defn} \label{defn:mutation}
For each $1 \leq k \leq n$, define the \textbf{mutation in direction $k$} of the seed $(X,Q)$ by $\mu_k(X,Q) = ((X \setminus \{x_k\}) \cup \{x_{k'}\}, \mu_k(Q))$ where $x_{k'}$ is a rational function in $X \setminus \{x_k\}$ and $\mu_k(Q)$ is another quiver on $n$ vertices. More concretely, 

\begin{itemize}
    \item $x_{k'}x_k = \displaystyle \prod_{i \to k \in Q_1} x_i + \prod_{k \to j \in Q_1} x_j$;  
    \item $\mu_k(Q)$ is obtained from transforming $Q$ by the following three step process:

\begin{itemize}
    \item For any arrow incident to $k$, reverse the orientation;
    \item For any two path $i \to k \to j$, draw an arrow $i \to j$;
    \item Delete any created 2-cycles.
\end{itemize}
\end{itemize}

\end{defn}

\begin{ex}
For the quiver and triangulation from Example \ref{ex:quiverandtriangulation}, if we mutate the quiver in direction 3, we obtain the quiver shown in Figure \ref{fig:mutatedquiverfromtriangulation}. Moreover, $x_{3'} = \dfrac{x_1x_4x_5 + x_2}{x_3}$.
\end{ex}

\begin{defn}
Let $\mathcal{X}$ denote the union of all clusters obtainable by a sequence of mutations starting from a fixed initial seed $(X,Q)$. The \textbf{cluster algebra} $\mathcal{A}(X,Q)$ is the algebra generated by $\mathcal{X}$ over some ground ring $R$ i.e. $\mathcal{A}(X,Q) = R[\mathcal{X}]$. 
\end{defn}

One of the first remarkable properties proven about cluster algebras is the Laurent Phenomenon.

\begin{thm} {\cite[Theorem 3.1]{fzfoundations}} \label{thm:LP}
Any cluster variable $x \in \mathcal{A}$ can be expressed as 
$$x = \frac{N(x_1, \dots, x_n)}{x_1^{d_1} \cdots x_n^{d_n}}$$ 
where $N(x_1, \dots, x_n) \in \zz[x_{n+1}^{\pm 1}, \dots, x_{2n}^{\pm 1}][x_1, \dots, x_n]$ and is not divisible by any $x_i$. The \textbf{denominator vector} of $x$ is the vector $\underline{d} = (d_1, \dots, d_n)$.
\end{thm}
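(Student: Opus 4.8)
The plan is to prove this by the Fomin--Zelevinsky induction, carried out along the $n$-regular tree $\mathbb{T}_n$ of seeds with the fixed seed $(X,Q)$ at the root. Write $\mathcal{L}$ for the ring of Laurent polynomials $\zz[x_{n+1}^{\pm 1},\dots,x_{2n}^{\pm 1}][x_1^{\pm 1},\dots,x_n^{\pm 1}]$ in the initial cluster over the coefficient ring; this is a unique factorization domain. It suffices to prove that every cluster variable obtained from $(X,Q)$ by a finite sequence of mutations lies in $\mathcal{L}$: the refined shape $x = N/(x_1^{d_1}\cdots x_n^{d_n})$ with $N \in \zz[x_{n+1}^{\pm1},\dots,x_{2n}^{\pm1}][x_1,\dots,x_n]$ not divisible by any $x_i$ then follows by collecting the negative powers of $x_1,\dots,x_n$, once we also know that the resulting numerator shares no $x_i$-factor with the monomial $x_1^{d_1}\cdots x_n^{d_n}$ (this last point being part of the coprimality bookkeeping discussed at the end).

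I would induct on the minimal number $m$ of mutations needed to produce $x$. For $m=0$ there is nothing to show. For $m=1$ the exchange relation reads $x = \bigl(\prod_{i\to k}x_i + \prod_{k\to j}x_j\bigr)/x_k$, and since the quivers in play have no loops and (by the last clause of Definition~\ref{defn:mutation} for self-folded triangles, together with the fact that mutation deletes $2$-cycles) no $2$-cycles, the two monomials in the numerator involve disjoint sets of variables and neither is divisible by $x_k$; hence they are coprime, their sum is divisible by no $x_\ell$, and $x$ already has the asserted form with denominator exactly $x_k$.

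For the inductive step, write a shortest mutation sequence for $x$ as $\Sigma_0\to\cdots\to\Sigma_{m-2}\xrightarrow{\,\mu_j\,}\Sigma_{m-1}\xrightarrow{\,\mu_i\,}\Sigma_m$, with $x$ the cluster variable created at the last step. If $i=j$ then $\Sigma_m=\Sigma_{m-2}$, so $x$ is a cluster variable of $\Sigma_{m-2}$ and is producible in fewer than $m$ mutations; we are done by induction. So assume $i\neq j$. Applying the exchange relation for $x$ at $\Sigma_{m-1}$ in direction $i$ gives $x\cdot z = M_1+M_2$, where $z$ is the cluster variable of $\Sigma_{m-1}$ that the final mutation replaces and $M_1,M_2$ are monomials in the remaining cluster variables of $\Sigma_{m-1}$. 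Since $i\neq j$, the variable $z$ already lies in the cluster of $\Sigma_{m-2}$, and every variable in $M_1,M_2$ lies in the cluster of $\Sigma_{m-1}$; all of these are producible in fewer than $m$ mutations, so by induction $z,M_1,M_2\in\mathcal{L}$ and $x=(M_1+M_2)/z$ is a ratio of two elements of $\mathcal{L}$. To upgrade this to $x\in\mathcal{L}$ one must show $z\mid M_1+M_2$ in $\mathcal{L}$, which is the content of the \emph{Caterpillar Lemma}: by reorganizing the tail of the mutation sequence through a caterpillar-shaped subtree of $\mathbb{T}_n$ one produces a second expression $x=P/Q$ with $Q\in\mathcal{L}$ coprime to $z$, and then $z\mid M_1+M_2$ follows because $\mathcal{L}$ is a UFD; the same comparison delivers the reduced form of $x$.

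The invariant that has to be threaded through the induction --- and whose preservation under a single mutation I expect to be the main obstacle --- is a coprimality statement: in every cluster that appears, the two monomials of each exchange polynomial are coprime in $\mathcal{L}$, and in particular no exchange polynomial used to mutate in direction $j$ is divisible by the variable $x_j$ it exchanges. This is exactly what makes the denominator-clearing above valid and what yields the final assertion that the numerator $N$ is divisible by no $x_i$. Checking that mutation preserves this invariant is once more an elementary, $2$-cycle-free computation with monomials. (In the type $D_n$ setting of the present paper one could instead read the Laurent property off the explicit dimer expansion of Section~\ref{section:dimers}, but the inductive argument sketched here is self-contained.)
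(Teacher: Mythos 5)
This theorem is not proved in the paper at all; it is imported verbatim as \cite[Theorem 3.1]{fzfoundations}, so there is no ``paper's own proof'' to compare against. What you have sketched is (an outline of) the original Fomin--Zelevinsky argument: induct along the $n$-regular exchange tree, reduce the Laurent property to a divisibility statement via the exchange relation, and discharge the divisibility with the Caterpillar Lemma plus a coprimality invariant for exchange polynomials. The identification of the strategy is correct, and the base cases $m=0,1$ and the reduction-when-$i=j$ step are fine.

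The gap is that the Caterpillar Lemma is invoked rather than proved, and it \emph{is} the theorem. Once you have written $x=(M_1+M_2)/z$ with $z,M_1,M_2\in\mathcal{L}$ by induction, the only content left is $z\mid M_1+M_2$ in $\mathcal{L}$; your paragraph names this as the obstacle and then waves at it. Moreover, the mechanism you describe---``reorganize the tail of the mutation sequence through a caterpillar-shaped subtree to produce a second expression $x=P/Q$ with $Q$ coprime to $z$''---is not quite how the Fomin--Zelevinsky argument actually runs. Their proof maintains, as part of the induction along the spine of the caterpillar, that each cluster variable is Laurent \emph{simultaneously} with respect to the head seed and two auxiliary seeds one and two steps down the spine, and it maintains a coprimality condition between the exchange polynomial at the head and the cluster variable it exchanges; comparing the three Laurent expressions in a UFD is what forces the divisibility. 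That coprimality condition is itself propagated by an explicit rank-$\le 3$ calculation (three consecutive exchanges in directions $i,j,i$), and it is genuinely more than ``an elementary, $2$-cycle-free computation with monomials'': the exchange polynomials are polynomials in the \emph{current} cluster variables, which are themselves Laurent expressions, so coprimality in $\mathcal{L}$ is not a purely combinatorial statement about disjoint variable sets. To turn your sketch into a proof you would need to state the Caterpillar Lemma precisely, formulate the coprimality invariant in the ring $\mathcal{L}$, and carry out the three-step exchange computation that shows the invariant is preserved; as written, those are exactly the steps you have flagged and skipped.
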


Cluster algebras from surfaces have topological formulations that are helpful for understanding the algebra. Let $(S,M)$ be an unpunctured marked surface and let $T$ be a triangulation of $(S,M)$. Let $Q_T$ be the quiver associated to the triangulation $T$ and let $X_T$ be the initial cluster associated to $T$. In the cluster algebra $\mathcal{A} = \mathcal{A}(X_T,Q_T)$, there are bijections
$$\{\text{cluster variables of }\mathcal{A}\} \longleftrightarrow \{\text{arcs of } (S,M)\}$$
$$\{\text{clusters of }\mathcal{A}\} \longleftrightarrow \{\text{triangulations of } (S,M)\}.$$

\noindent Moreover, let $\gamma \in T$ be an internal arc that is not in a self-folded triangle and let $\gamma'$ be the arc obtained by flipping $\gamma$ in $T$. Then, cluster mutation $\mu_\gamma(X_T)$ in direction $\gamma$ is compatible with flip $\mu_\gamma(T)$ in a quadrilateral of the arc $\gamma$, that is,

$$\mu_{x_\gamma}(X_T) = (X_T \setminus \{x_{\gamma}\}) \cup \{x_{\gamma'}\} $$

\noindent corresponds to 

$$\mu_\gamma(T) = (T \setminus \{\gamma\}) \cup \{\gamma'\}.$$

After observing the above correspondence, when $(S,M)$ is unpunctured, all triangulations are connected by flips and arcs are in bijection with cluster variables of the associated cluster algebra. However, the above correspondence becomes more complicated if $(S,M)$ contains a puncture. For example, the radii of self-folded triangles are not mutable and as a consequence, our set of arcs is not in direct bijection with cluster variables in the associated cluster algebra. To deal with these complications, Fomin, Shapiro, and Thurston introduced a decoration one can place on arcs called a \textbf{tag} that allows us to fix the problem of having immutable arcs. When one of the endpoints of an arc is a puncture, we can choose to decorate it with a tag $\Bowtie$ or leave it plain. We let $\gamma^{\Bowtie}$ denote the tagged version of the arc $\gamma$ which gives the tagged versions of self-folded triangles, see Figure \ref{fig:idealvstagged}. Since we are only considering a once-punctured surface, the only arcs the can be tagged are arcs with one endpoint the unique puncture; that is, an arc whose endpoints are not the puncture are equal to their plain version. \\

\begin{figure}
    \centering
    \includegraphics[scale=.3]{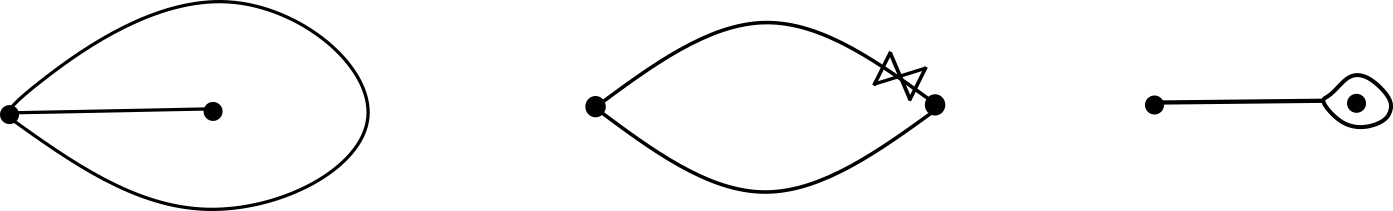}
    \caption{A self-folded triangle (on the left) corresponds to a tagged triangle (in the middle). A tagged arc can be interpreted as a ``lollipop'' as shown on the right.}
    \label{fig:idealvstagged}
\end{figure}

In order to define a tagged triangulation, we must define what it means for two arcs to be compatible with this new decoration. We define this notion specifically for the case of a once-punctured surface. 

\begin{defn}\label{defn:taggedcrossingvector}
Let $\gamma, \delta$ be two (possibly tagged) arcs. Then define the crossing number $e^{\Bowtie}(\gamma, \delta)$ as follows:
\begin{enumerate}
    \item if both arcs are plain, then $e^{\Bowtie}(\gamma, \delta) = e(\gamma, \delta)$;
    \item if both are tagged, then $e^{\Bowtie}(\gamma, \delta) = 0$;
    \item if exactly one is tagged, say $\gamma$, then $e^{\Bowtie}(\gamma, \delta) = e(\ell, \delta)$ where $\ell$ is the loop around the puncture.
\end{enumerate}

With that, we say two (possibly tagged) arcs $\gamma, \delta$ are compatible if $e^{\Bowtie}(\gamma, \delta) = 0$. A maximal set of pairwise compatible tagged arcs form a \textbf{tagged triangulation}. 
\end{defn}

With these definitions, we are now able to state the following result:

\begin{thm}\cite{fst}
Let $(S,M)$ be any marked surface except a once-punctured surface with empty boundary. Let $T$ be a triangulation of $(S,M)$ and $Q_T$ its associated quiver. Let $X_T$ be the initial cluster associated to $T$, in $\mathcal{A} = \mathcal{A}(X_T,Q_T)$, there are bijections
$$\{\text{cluster variables of }\mathcal{A}\} \longleftrightarrow \{\text{tagged arcs of } (S,M)\}$$
$$\{\text{clusters of }\mathcal{A}\} \longleftrightarrow \{\text{tagged triangulations of } (S,M)\}.$$
\end{thm}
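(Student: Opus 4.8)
The plan is to split the statement into two essentially independent facts and then combine them: \emph{(i)} any two tagged triangulations of $(S,M)$ are connected by a finite sequence of flips, and \emph{(ii)} the assignment $\gamma \mapsto x_\gamma$, sending a tagged arc $\gamma$ to the cluster variable obtained by mutating $(X_T,Q_T)$ to a seed whose triangulation contains $\gamma$, is well defined and injective. Granting these, the surjectivity of $\{\text{tagged arcs}\} \to \{\text{cluster variables of }\mathcal{A}\}$ is automatic: by \emph{(i)} every seed is reached from $(X_T,Q_T)$ by mutations, and each mutation — via the flip/mutation compatibility recalled above for the unpunctured case, extended here to the tagged setting where self-folded triangles are replaced by arcs tagged at the puncture so that \emph{every} arc becomes mutable — corresponds to a flip of the ambient tagged triangulation. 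Hence every cluster sits inside some tagged triangulation and every cluster variable equals some $x_\gamma$; since a seed and its underlying tagged triangulation determine one another, the bijection on clusters/triangulations follows from the one on variables/arcs.

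First I would establish \emph{(i)}. The untagged version — connectivity of the flip graph of \emph{ideal} triangulations — is classical and can be proved by induction on the number of marked points together with the genus, or cited from the topology of triangulated surfaces. To promote this to tagged triangulations: (a) a tagged triangulation with no arc tagged at the puncture is precisely the tagged incarnation of an ideal triangulation (self-folded triangle $\leftrightarrow$ arc tagged at the puncture), so all of these are mutually connected; and (b) the tag-changing moves at the puncture are realizable by flips. Step (b) is where the hypothesis ``not a once-punctured surface with empty boundary'' enters: when $\partial S \neq \varnothing$ — as for the once-punctured $n$-gon — one can drag a tagged endpoint along an arc meeting the boundary and trade $\Bowtie$ for a plain tag through a bounded sequence of flips, whereas on a once-punctured \emph{closed} surface the tagged flip graph genuinely has two components interchanged by the global tag-reversal involution, which is exactly why that case must be excluded.

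Next I would check, in the tagged setting, that each flip $\mu_\gamma(T) = (T\setminus\{\gamma\}) \cup \{\gamma'\}$ is matched by the seed mutation of Definition \ref{defn:mutation}, i.e. that $x_\gamma x_{\gamma'} = \prod_{i\to k} x_i + \prod_{k\to j} x_j$ coincides with the Ptolemy relation among the arcs of the (possibly degenerate) quadrilateral surrounding $\gamma$. This is a finite, local, case-by-case verification over the configurations around $\gamma$ — an ordinary quadrilateral, and the self-folded and once-punctured degenerations that tagging renders non-degenerate — and it is precisely tagging that removes the immutable-radius obstruction that breaks the naive correspondence on punctured surfaces.

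The main obstacle is \emph{(ii)}, the injectivity of $\gamma \mapsto x_\gamma$. The cleanest route is a geometric realization of $\mathcal{A}$: on the decorated Teichm\"uller space $\widetilde{\mathcal{T}}(S,M)$ of complete hyperbolic structures with a chosen horocycle at each marked point, Penner's lambda lengths $\lambda_\gamma$ assigned to tagged arcs (with the conjugate-horocycle convention for tags at the puncture) satisfy exactly the Ptolemy relations, so there is an algebra map from $\mathcal{A}$ into the ring of functions on $\widetilde{\mathcal{T}}(S,M)$ with $x_\gamma \mapsto \lambda_\gamma$. Since any finite set of distinct tagged arcs can be separated by a suitable decorated hyperbolic structure — their lambda lengths are then distinct functions — the $x_\gamma$ are pairwise distinct cluster variables, which is the injectivity we need. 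I would then assemble the pieces: connectivity plus flip/mutation compatibility give the two surjections, the Teichm\"uller realization gives injectivity, and together they yield both claimed bijections; a final sanity check confirms that the excluded once-punctured closed case is the unique place the argument (specifically step (b)) fails, matching the known disconnectedness of its tagged flip graph.
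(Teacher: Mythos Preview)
The paper does not prove this theorem; it is quoted verbatim from \cite{fst} as background and no argument is given. So there is nothing in the paper to compare your proposal against.

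That said, your outline is a reasonable sketch of the argument as it appears in \cite{fst} and the companion paper of Fomin and Thurston: connectivity of the tagged flip graph (with the once-punctured closed surface correctly identified as the obstruction), local verification that tagged flips match seed mutations, and injectivity via the lambda-length realization on decorated Teichm\"uller space. One point to tighten: in step (b) you say that when $\partial S \neq \varnothing$ one can ``drag a tagged endpoint along an arc meeting the boundary,'' but the hypothesis also allows punctured surfaces with empty boundary provided there are at least two punctures, and there the tag-changing argument works by using a second puncture rather than the boundary; your phrasing covers only the boundary case. Also, the well-definedness of $\gamma \mapsto x_\gamma$ (independence of the mutation path used to reach a triangulation containing $\gamma$) deserves its own sentence---it follows from the fact that any two such paths differ by moves that fix $x_\gamma$, but you have folded it into (ii) without quite saying why it holds.
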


Using this theorem, we can define a vector associated to each tagged arc of $(S,M)$ that will index our cluster variables.

\begin{defn}
Let $T = \{\tau_1, \dots, \tau_n\}$ be a triangulation of a marked surface $(S,M)$ and let $\gamma$ be an arc not in $T$. The \textbf{crossing vector associated to $\mathbf{\gamma}$}, denoted $\underline{d}_\gamma \in \zz_{\geq 0}^n$, is the vector defined by $d_i = e^{\Bowtie}(\tau_i, \gamma)$ for $1 \leq i \leq n$. That is, $\underline{d}_\gamma$ is the vector that records the number of times $\gamma$ crosses the arcs of $T$. 
\end{defn}

\begin{rmk}
When $Q_T$ is acyclic i.e. there are no internal triangles in the triangulation $T$, the crossing vector and denominator vector are the same. However, when $Q_T$ contains an oriented cycle, these vectors may not coincide. For example, see Figure 21 in \cite{fst}.
\end{rmk}

In this paper, we will be use initial triangulations that are ideal - as it makes the representation theory simpler. However, in light of the above theorem, we make sure to examine the full set of cluster variables by also considering tagged arcs. We will make sure these cases are always explicitly stated.\allowbreak
\vspace{1em}

In order to properly analyze all type $D_n$ cluster algebras, we heavily rely on Vatne's classification of all type $D_n$ quivers give in \cite{vatne}. Note that we will take these notations for the remainder of the paper.

\begin{thm}\label{thm:vatne} \cite{vatne}
When the underlying quiver is mutation-equivalent to the type $D_n$ Dynkin quiver, there are four forms the quiver can take. Namely, all type $D_n$ quivers must be of types I, II, III and IV as shown in Figure \ref{fig:vatnetypes}, where the subquivers labeled $Q, Q', Q'', \dots, Q^{(k)}$ are type $A_m$ quivers (that need not by acyclic). In type I, the arrows between $a$ and $c$ and between $b$ and $c$ can be oriented in either direction.
\end{thm}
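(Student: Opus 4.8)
\emph{Sketch of approach.} The plan is to derive the classification from the surface model recalled above rather than by manipulating quivers directly. By the theorem of Fomin--Shapiro--Thurston quoted above, a quiver is mutation equivalent to the type $D_n$ Dynkin quiver exactly when it is isomorphic to $Q_T$ for some tagged triangulation $T$ of the once-punctured $n$-gon, so it suffices to show that every such $Q_T$ has one of the four listed forms.

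First I would put $T$ into a normal form. Inspecting Definition~\ref{defn:taggedcrossingvector}, the arcs of a tagged triangulation of a once-punctured polygon that are incident to the puncture $p$ are either all plain, or all notched, or else are a single plain--notched pair to one common vertex (which is exactly a self-folded triangle), with no other arc at $p$ in that last case; moreover the involution reversing all tags at $p$ carries a triangulation to a triangulation with an isomorphic quiver. Hence we may assume $T$ is ideal, and near $p$ we see either a clockwise fan of $k\ge 2$ plain arcs $\gamma_1,\dots,\gamma_k$, or a self-folded triangle with radius $r$ and loop $\ell$ (the degenerate ``$k=1$'' case).

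Next I would cut the polygon along the arcs at $p$. Deleting $\gamma_1,\dots,\gamma_k$ (resp.\ deleting $\ell$) splits the $n$-gon into $k$ (resp.\ one) unpunctured sub-polygons, each triangulated by the restriction of $T$. Since arrows are read clockwise in each ideal triangle, the arcs $\gamma_1,\dots,\gamma_k$ carry precisely the oriented cycle $\gamma_1\to\gamma_2\to\cdots\to\gamma_k\to\gamma_1$, each arrow $\gamma_i\to\gamma_{i+1}$ coming from the unique fan triangle containing both; in the self-folded case the special rule in the definition of $Q_T$ instead yields the small gadget on $r$, $\ell$, and the one or two arcs adjacent to $\ell$ in the outer sub-polygon, with the arrows at $r$ and $\ell$ free to point either way -- the orientation freedom recorded in type~I. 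Invoking the classification of type $A_m$ mutation classes (provable, if desired, by the parallel and easier induction on the number of boundary edges of an unpunctured polygon), the full subquiver of $Q_T$ on the arcs interior to a given sub-polygon is the quiver of that sub-polygon's triangulation and so lies in a type $A$ mutation class, exactly as the statement demands. Reassembling $Q_T$ from the puncture gadget, the type-$A$ pieces, and the arrows attaching each piece to the gadget, and then sorting the genuinely distinct configurations from the degenerate ones (the value of $k$, which sub-polygons are empty, whether the triangle outside $\ell$ has one or two interior sides) yields exactly the four forms I--IV of Figure~\ref{fig:vatnetypes}.

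\emph{Main obstacle.} Each step above is conceptually short; the real work lies in the reassembly. One must enumerate the ways a type-$A$ piece can be glued onto the puncture gadget, confirm that the attaching arrows are precisely those in Figure~\ref{fig:vatnetypes}, and -- most importantly -- check that the resulting list of four forms is both exhaustive and non-redundant. A lesser but delicate point is the normal-form reduction, since a self-folded triangle sits exactly at the transition between the ``all plain at $p$'' and ``all notched at $p$'' configurations and must be treated as its own case. An alternative route avoiding surfaces is induction on $n$ from the type $A_{n-1}$ classification, adjoining a single vertex in all possible ways; there the difficulty migrates to showing the four types are closed under arbitrary quiver mutation, which is again a finite but lengthy case check.
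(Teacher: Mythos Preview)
The paper does not prove this theorem; it is simply quoted from \cite{vatne} as a known classification result and used as a tool throughout. So there is no ``paper's own proof'' to compare against.

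That said, your surface-model approach is a reasonable and self-contained route to the classification, and it is \emph{not} the approach taken in \cite{vatne}. Vatne works purely combinatorially with quivers: he defines the four types directly, checks by hand that each mutation $\mu_k$ applied to a quiver of type I--IV again yields a quiver of type I--IV (a lengthy but finite case analysis), and observes that the $D_n$ Dynkin quiver itself is of type I. Your argument instead exploits the Fomin--Shapiro--Thurston bijection between seeds and tagged triangulations of the once-punctured $n$-gon, reducing the problem to classifying the local configurations of arcs at the puncture and then invoking the (easier) type $A$ classification on the complementary unpunctured pieces. The surface route is conceptually cleaner and explains \emph{why} there are exactly four types (they correspond to the number and arrangement of arcs at the puncture), at the cost of importing the surface machinery; Vatne's route is elementary but opaque. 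Your identification of the main obstacle---the reassembly step and the careful handling of the self-folded case---is accurate: that is where the genuine bookkeeping lives, and it is also where a sketch can silently go wrong (for instance, one must verify that the ``spike'' triangles attaching the type $A$ pieces to the central cycle in type IV really do produce the oriented $3$-cycles shown in Figure~\ref{fig:vatnetypes}, and that no other attachment pattern is possible).
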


We use this classification and the correspondence between cluster variables, (possibly tagged) arcs and crossing vectors as initial data in our combinatorial model. To this end, we cataloged all families of arcs that can appear on the once-punctured disk in Appendix \ref{section:d-vectors}. The catalog is an exhaustive list of all the crossing vectors that can appear in each type, with respect to Vatne's classification, of any ideal initial triangulation of the once-punctured disk. The Appendix boils down to understanding when crossing vectors are fully supported on oriented cycles on the quiver and which vectors degenerate to the acyclic case. Moreover, we emphasize which crossing vectors are composed of 0's and 1's and which crossing vectors have 2's as this distinction complicates both the combinatorics and representation theory, as we will be associating representations to each crossing vectors by insisting that the dimension vector of the representation and the crossing vector coincide.

\begin{figure}[H]
    \centering
    \includegraphics[width=\textwidth]{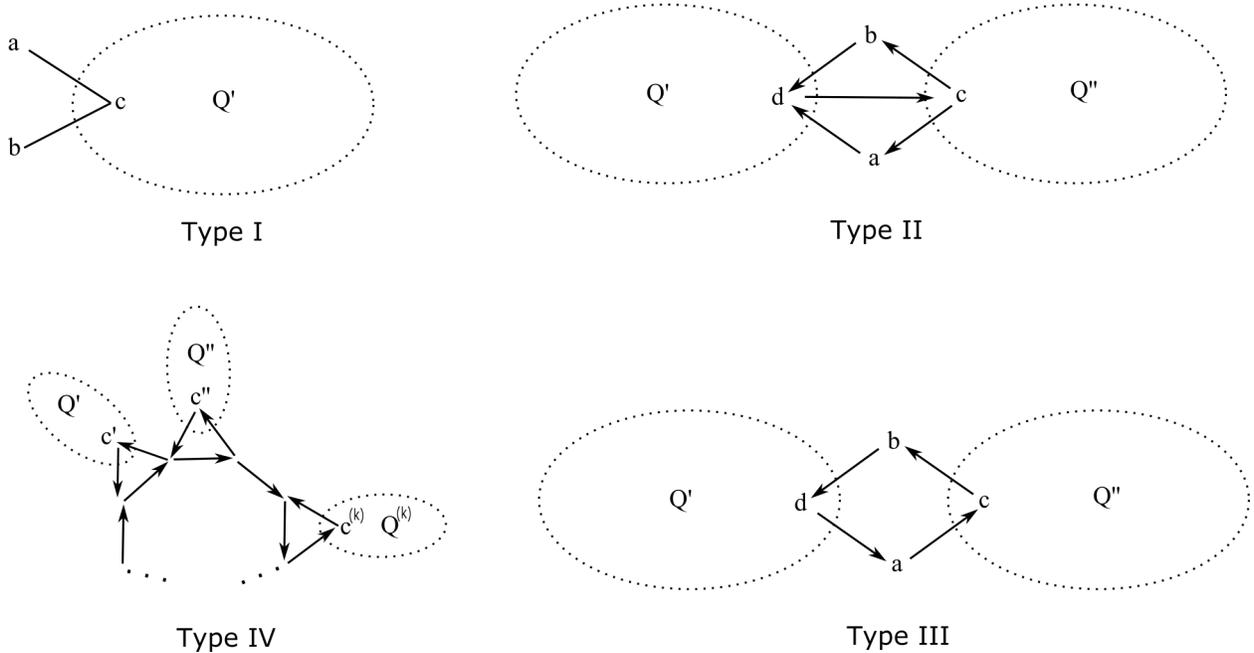}
    \caption{Vatne's classification of the four types of quivers in type $D_n$.}
    \label{fig:vatnetypes}
\end{figure}
\end{subsection}

\begin{subsection}{Principal Coefficients} \label{subsection:principalcoefficients}

Each crossing vector indexes a cluster variable associated to an arc $\gamma$. In this paper, we will give a combinatorial interpretation of the $F$-polynomial associated to a cluster variable $x_\gamma$ i.e. indexed by some vector $\underline{d}_\gamma$. In order to do this, we need to associate coefficients to our cluster algebra by adding what are called frozen vertices to the associated quiver. These are additional vertices of the quiver that record the dynamics of mutation without being mutable themselves. This follows the definitions and results about cluster algebras with coefficients as in \cite{ca4}.

\begin{defn}
A \textbf{framed (principal extension) quiver} $Q$ is a quiver on $2n$ vertices where there are $n$ mutable vertices $\{1, \dots, n\}$ and $n$ frozen vertices $\{1', \dots, n'\}$ such that each $i' \to i$ and no frozen vertices are connected in any other way.  
\end{defn}

Once we have the definition of a framed quiver, we associate an indeterminate $x_i$ to each mutable vertex $i \in Q_0$ and another indeterminate $y_j$ to each frozen vertex $j' \in Q_0$. Each set of $2n$ indeterminates form a \textbf{cluster} $\tilde{X} = \{x_1, \dots, x_n, y_1, \dots, y_n\}$. The pair $(\tilde{X},Q)$ is called an \textbf{extended seed}.

\begin{rmk}
The choice of frozen vertices in the quiver gives rise to cluster algebras with \textbf{principal coefficients.} We take this definition for our purposes in order to most conveniently define the $F$-polynomial to an associated cluster variable. 
\end{rmk}

To define the $F$-polynomial, we need to define the variables the polynomial is in. To this end, fix an initial seed $(X,Q)$, define the new variables for $1 \leq j \leq n$

$$\hat{y_j} = x_{n+j} \prod_{i=1}^n x_i^{\#\{i \to j \in Q_1\}}.$$

Using these new variables, we are ready to define the $F$-polynomial and also $g$-vectors. Although we state it as a definition, the following is also a theorem that cluster variables with principle coefficients have this form.

\begin{defn} \label{defn:fpoly}{\cite[Definition 3.3/Proposition 7.8]{ca4}}
Let $1\leq \ell \leq n$, there exists a unique primitive polynomial 
$F_{\ell} \in \zz[u_1, \dots, u_n]$
and a unique vector $\underline{g_{\ell}} = (g_1, \dots, g_n) \in \zz^n$ such that the cluster variable $x \in A( \tilde{X}, Q )$ is given by 

$$x = F_{\ell}(\hat{y_1}, \dots, \hat{y_n})x_1^{g_1}\cdots x_n^{g_n}.$$
The polynomial $F_{\ell}$ is called an \textbf{$\mathbf{F}$-polynomial} and $\underline{g_{\ell}}$ is called a \textbf{$\mathbf{g}$-vector}.
\end{defn}
\end{subsection}

\begin{subsection}{The Jacobian Algebra and Quivers with Potential Representations}\label{subsection:alg}

In this section, we review some representation theory that will dictate the behavior of our combinatorial model for the $F$-polynomial. Namely, we will define the Jacobian algebra of a triangulation of a punctured surface, first defined by Labardini-Fragoso in \cite{lf09}. For this section, assume that $\Bbbk $ is an (algebraically closed) field. 

\begin{defn}\label{defn:path}
Recall that we denote $Q = (Q_0,Q_1)$ to be a quiver where $Q_0$ is the set of vertices of $Q$ and $Q_1$ is the set of arrows of $Q$. For $i \xrightarrow\alpha$ $j$ in $Q_1$, let $s(\alpha) = i$ denote the source of the arrow $\alpha$ and let $t(\alpha) = j$ denote the target of $\alpha$. 
Let $\alpha_0, \dots, \alpha_p \in Q_1$. A \textbf{path} in $Q$ is a sequence $\alpha_0 \cdots \alpha_p$ where $t(\alpha_{i}) = s(\alpha_{i+1})$ for all $0 \leq i <p$. 
\end{defn}

\begin{defn}\label{defn:concatenation}
Let $\epsilon_i$ denote the path of length 0 at vertex $i$. Let $\alpha = \alpha_0 \cdots \alpha_p$ denote a path of length $p$ and $\beta = \beta_0\cdots\beta_q$ denote a path of length $q$. The concatenation of paths $\alpha \epsilon_i$ is given by 

$$\alpha \epsilon_i = \begin{cases}
\alpha &\text{if } t(\alpha_p) = i\\
0 &\text{otherwise}
\end{cases}$$

Similarly, the concatenation of $\epsilon_i \alpha$ is given by

$$\epsilon_i \alpha = \begin{cases}
\alpha &\text{if } s(\alpha_0) = i\\
0 &\text{otherwise}
\end{cases}$$

In general, the concatenation of $\alpha \beta$ is given by

$$\alpha \beta = \begin{cases}
\alpha_0 \dots \alpha_p \beta_0 \dots \beta_q &\text{if } t(\alpha_p) = s(\beta_0)\\
0 &\text{otherwise}
\end{cases}$$
\end{defn}

\begin{defn}\label{defn:pathalgebra}
The \textbf{path algebra of $Q$}, denoted $\Bbbk Q$ is the $\Bbbk $-algebra with basis paths in $Q$, including paths of length 0, and multiplication given by concatenation of paths. 
\end{defn}

\begin{ex}\label{example:pathalgebra}
Consider the following quiver:

\[\begin{tikzcd}
1 \arrow[r, "\alpha"] & 2 \arrow[d, "\beta"]\\
4 \arrow[u, "\delta"] & 3 \arrow[l, "\gamma"]
\end{tikzcd}\]

Then the path algebra $\Bbbk Q$ will be an infinite-dimensional algebra because of the oriented cycle $\alpha\beta\gamma\delta$. The basis will consist of elements such as

$$\{\epsilon_1, \epsilon_2, \epsilon_3, \epsilon_4, \alpha, \beta, \gamma, \delta, \alpha\beta, \beta\gamma, \gamma\delta, \delta\alpha, \alpha\beta\gamma, \beta\gamma\delta, \gamma\delta\alpha, \delta\alpha\beta,$$
$$\alpha\beta\gamma\delta, \beta\gamma\delta\alpha, \gamma\delta\alpha\beta,\delta\alpha\beta\gamma, \alpha\beta\gamma\delta\alpha, \beta\gamma\delta\alpha\beta, \gamma\delta\alpha\beta\gamma, \delta\alpha\beta\gamma\delta, \dots\}$$

\end{ex}

\begin{defn}
Let $R_Q$ be the ideal of $\Bbbk Q$ generated by the arrows of $Q$. More generally, denote by $R_Q^m$ the ideal of the path algebra $\Bbbk Q$ generated by paths of length $m$ in $Q$. A two-sided ideal $I$ of $A$ is \textbf{admissible} if there exists $m \geq 2$ such that $R_Q^m \subseteq I \subseteq R^2_Q$. If $Q$ is a quiver and $I$ is an admissible ideal of $\Bbbk Q$, then the pair $(Q, I)$ is a \textbf{bound quiver}.
\end{defn}

\begin{ex}\label{example:admissibleideal}
Let $(S,M)$ be an unpunctured marked surface and consider a triangulation $T$ of $(S,M)$. It is possible to associate an admissible ideal to the triangulation $T$, \cite{lf09, ABCJP10}. Let $I_T$ to be the ideal of $\Bbbk Q$ generated by all $2$-paths $\alpha\beta$ such that $s(\alpha) = t(\beta)$, but that $\alpha$ and $\beta$ arise from two different marked points in $(S,M)$; this ideal is admissible. On the level of the quiver $Q_T$, the ideal consists of all $2$-paths in $Q_T$ coming from the same triangle in $T$. 
\end{ex}

The admissible ideal referred to in Example \ref{example:admissibleideal} comes from a potential associated to a triangulation of a surface. We now define a potential of a quiver that arises from a triangulation of a surface.

\begin{defn} \label{defn:cycle}
Let $P = \alpha_0 \cdots \alpha_p$ be a path in $Q$. We say that $P$ is a \textbf{cycle} when $s(\alpha_0) = t(\alpha_p)$. 
\end{defn}

\begin{defn} \label{defn:potential}
A \textbf{potential} $W$ on $Q$ is a possibly infinite linear combination of cycles in $\Bbbk Q$. We call the pair $(Q,W)$ a \textbf{quiver with potential}.
\end{defn}

\begin{defn} \label{defn:cyclicequivalence}
Two potentials $W$ and $W'$ on $Q$ are \textbf{cyclically equivalent} when $W-W'$ lies in the closure of the span of all elements of the form $\alpha_1 \cdots \alpha_p - \alpha_2 \cdots \alpha_p \alpha_1$, where $\alpha_1 \cdots \alpha_p$ is a cycle. 
\end{defn}

\begin{defn} \label{defn:potentialofT}
When a quiver comes from a triangulation $T$ of a once-punctured surface, define the \textbf{potential associated to $\mathbf{T}$} by
$$W(T) = \sum_\Delta W^\Delta + W^P,$$
\noindent where the sum is taken over $\Delta$ - a clockwise oriented triangle in the quiver i.e. an internal triangle in $T$; $W^\Delta$ is the potential given by the cycle created by the triangle $\Delta$; and $W^P$ is the potential given by the counterclockwise cycle around the puncture P as in \cite{lf09}.
\end{defn}

\begin{figure}
    \centering
    \includegraphics[width=\textwidth]{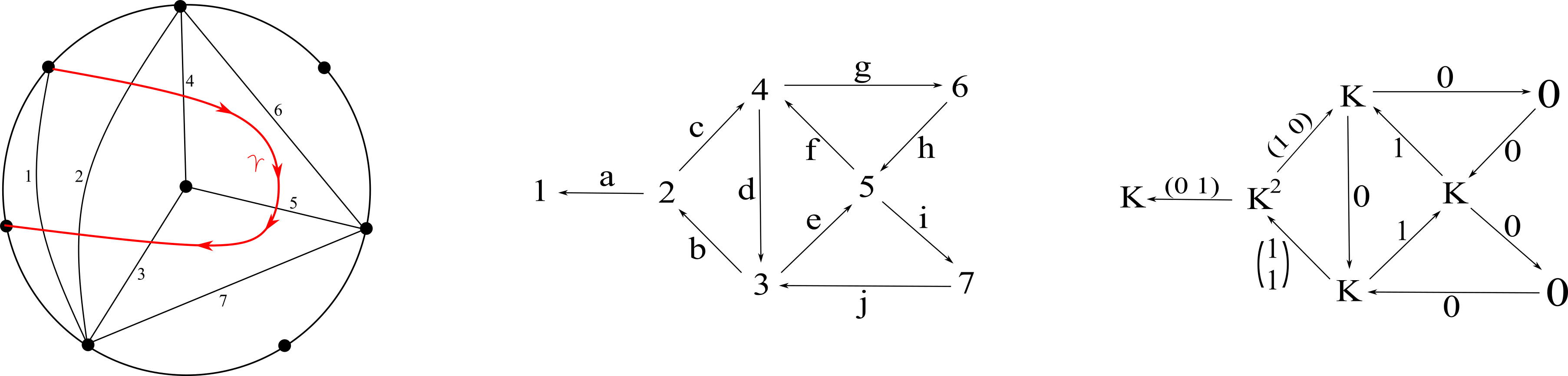}
    \caption{A triangulation with overlayed arc $\gamma$, quiver with potential and an the indecomposable quiver with potential representation $M_\gamma$.}
    \label{fig:puncturedrep}
\end{figure}

\begin{ex}\label{ex:quiverwithpotential}
Consider the ideal triangulation $T$ of the punctured pentagon and its associated quiver $Q_T$ in Figure \ref{fig:puncturedrep}. The potential is given by $W(T) = bcd + fgh+ eij + def$, where $W^\Delta = bcd + fgh+ eij$ which corresponds to the internal triangles formed by the arcs 2,4 and 3; 4,6 and 5; and 3,5 and 7 respectively. Additionally, $W^P = def$ corresponds to the counterclockwise cycle 3,4 and 5 coming from the puncture. 
\end{ex}

\begin{defn} \label{defn:cyclicderivatives}
Let $C = \alpha_0 \cdots \alpha_p$ be a cycle in $Q$. Then the \textbf{formal cyclic derivative} of $C$ with respect to $\alpha_i$ is 
\begin{align*}
\partial_{\alpha_i} (C) = \sum_{\alpha_j = \alpha_i} \alpha_{j+1} \cdots \alpha_{p} \alpha_{1} \cdots \alpha_{j-1}.
\end{align*}
The cyclic derivative extends linearly on linear combinations of cycles in $Q$. Note that the cyclic derivative of two cyclically equivalent potentials $W$ and $W'$ on $Q$ are equal.
\end{defn}

We are now ready to define a representation of a quiver with potential.

\begin{defn}\label{defn:quiverwithpotentialrep}
Let $(Q,W)$ be a quiver with potential. A \textbf{representation $M$ of $(Q,W)$} is a pair $M = (M_a, M_{\varphi_\alpha})$ where

\begin{itemize}
    \item $M_{a}$ is an assignment of $\Bbbk$-vector space $M_a$ to each a vertex $a \in Q_0$;
    \item $M_{\varphi_\alpha}$ is an assignment of a $\Bbbk$-linear map $\varphi_\alpha$ to each arrow $\alpha \in Q_1$ such that for each cyclic derivative $\partial_{a_i}$ of $W$, we insist the composition of the maps in the cyclic derivative is 0.
\end{itemize}

\end{defn}

Now that we have defined representations of quivers with potential, we re-contextualize this setting in terms of the Jacobian algebra.

\begin{defn} \label{defn:Jacobianideal}
Let $W$ be a potential on $Q$. The \textbf{Jacobian ideal} of $W$ is the ideal generated by all cyclic derivatives $\partial_{\alpha}(W)$ for $\alpha \in Q_1$. 
\end{defn}

With that, we define a quotient of the path algebra by this ideal.

\begin{defn}\label{defn:Jacobianalg}
Let $I$ be the Jacobian ideal of a potential $W$ in $Q$. The \textbf{Jacobian algebra} is the quotient $\Bbbk Q/ I$. If $W$ is a potential consisting of a sum of all cycles in $Q$ (up to cyclic equivalence), we refer to the corresponding Jacobian algebra as the Jacobian algebra of $Q$. 
\end{defn}

Both Definition \ref{defn:Jacobianideal} and \ref{defn:Jacobianalg} are well-defined up to cyclic equivalence. This is because the cyclic derivative of two cyclically equivalent potentials $W$ and $W'$ on $Q$ are equal; hence giving that the Jacobian ideals of $W$ and $W'$ are also equal. 
Moreover, since the Jacobian ideals of cyclically equivalent potentials $W$ and $W'$ on $Q$ are equal, the Jacobian algebras of $W$ and $W'$ are also equal. 

\begin{rmk}
A quiver with potential $(Q,W)$ is a bound quiver $(Q,I)$ using the admissible ideal $I$ generated by cyclic derivatives (with respect to each arrow of $Q_1$) of the potential $W$. Note that this ideal is admissible since each cycle in $W$ has positive length, and therefore, the ideal generated by a combination of these cycles must have bounded length. 
\end{rmk}

\begin{ex}\label{example:jacobianalg}
Consider the quiver from Example \ref{example:pathalgebra}. Endow this quiver with the potential $W = \alpha\beta\gamma\delta$. The Jacobian ideal is given by 
$$I = \langle \partial_{\alpha}(\alpha\beta\gamma\delta), \partial_{\beta}(\alpha\beta\gamma\delta), \partial_{\gamma}(\alpha\beta\gamma\delta), \partial_{\delta}(\alpha\beta\gamma\delta) \rangle = \langle \beta\gamma\delta, \gamma\delta\alpha, \delta\alpha\beta, \alpha\beta\gamma \rangle.$$

The Jacobian algebra $\Bbbk Q/I$ is a finite-dimensional algebra with basis 
$$\{\epsilon_1, \epsilon_2, \epsilon_3, \epsilon_4, \alpha, \beta, \gamma, \delta, \alpha\beta, \beta\gamma, \gamma\delta, \delta\alpha\}.$$
\end{ex}

\begin{defn}\label{defn:Jmodule}
Let $\Bbbk Q/I$ be the Jacobian algebra of $Q$. A \textbf{module $M = (M_a, M_{\varphi_\alpha})$ over $\Bbbk Q/I$} is given by two pieces of data

\begin{itemize}
    \item $M_{a}$ is an assignment of $\Bbbk$-vector space $M_a$ to each a vertex $a \in Q_0$;
    \item $M_{\varphi_\alpha}$ is an assignment of a $\Bbbk$-linear map $\varphi_\alpha$ to each arrow $\alpha \in Q_1$ such that for each relation $\alpha_1\cdots \alpha_r \in I$, we insist $\varphi_{\alpha_r} \cdots \varphi_{\alpha_1} = 0$. 
\end{itemize}

\end{defn}

We say such a module is \textbf{indecomposable} if it cannot be expressed as the direct sum of proper submodules. Studying indecomposable Jacobian algebra modules is equivalent to studying representations of the quiver with potential. More precisely, we have the following equivalence of categories:

\begin{thm} \cite{a06}, Theorem 1.6: \label{thm:equivofcats}
Let $A= \Bbbk Q/I$, where $Q$ is a finite connected quiver and $I$ an admissible ideal of $\Bbbk Q$. There exists an equivalence of categories between modules over $A$ and finite-dimensional representations of the bound quiver $(Q,I)$.
\end{thm}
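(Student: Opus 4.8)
This is the classical equivalence between (finite-dimensional) modules over a bound quiver algebra and (finite-dimensional) representations of the bound quiver, so the plan is to recall the standard proof: exhibit explicit functors in both directions and check that they are mutually quasi-inverse. For definiteness I work with right modules over $A$; the left-module version, obtained by passing to $A^{\mathrm{op}}$, is entirely symmetric.

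First I would construct a functor $F \colon \mathrm{rep}_{\Bbbk}(Q,I) \to \mathrm{mod}\,A$. Given a representation $M = (M_a, \varphi_\alpha)$ satisfying every relation in $I$, set $FM := \bigoplus_{a \in Q_0} M_a$ as a $\Bbbk$-vector space and define a right $\Bbbk Q$-action by declaring that the trivial path $\epsilon_a$ acts as the projection onto the summand $M_a$ and that an arrow $\alpha \colon i \to j$ sends the summand $M_i$ into $M_j$ via $\varphi_\alpha$ and kills every other summand; this extends multiplicatively, so a path $\alpha_1 \cdots \alpha_r$ acts as the composite $\varphi_{\alpha_r} \cdots \varphi_{\alpha_1}$ (interpreted as $0$ when the path fails to be composable in the required way). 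Associativity of the action is immediate from associativity of composition of linear maps, and since each generator $\alpha_1 \cdots \alpha_r$ of $I$ acts as $\varphi_{\alpha_r} \cdots \varphi_{\alpha_1} = 0$ by hypothesis, the action descends to $A = \Bbbk Q/I$; admissibility, i.e. $R_Q^m \subseteq I$, guarantees that $A$ is finite dimensional, so when the $M_a$ are finite dimensional $FM$ is a genuine object of $\mathrm{mod}\,A$. On morphisms, a map $f = (f_a) \colon M \to M'$ of representations is sent to the block map $\bigoplus_a f_a$, which is $A$-linear exactly because each $f_a$ intertwines $\varphi_\alpha$ with $\varphi'_\alpha$ and commutes with the projections; functoriality is then clear.

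Next I would construct $G \colon \mathrm{mod}\,A \to \mathrm{rep}_{\Bbbk}(Q,I)$. For an $A$-module $N$, set $N_a := N\epsilon_a$ for each $a \in Q_0$, and for an arrow $\alpha \colon i \to j$ let $\varphi_\alpha \colon N_i \to N_j$ be right multiplication by the class of $\alpha$ in $A$; this is well defined with the stated source and target because $\alpha = \epsilon_i\,\alpha\,\epsilon_j$ in $\Bbbk Q$. For any relation $\alpha_1 \cdots \alpha_r \in I$ the composite $\varphi_{\alpha_r} \cdots \varphi_{\alpha_1}$ is right multiplication by the class of $\alpha_1 \cdots \alpha_r = 0$ in $A$, hence zero, so $GN$ indeed lies in $\mathrm{rep}_{\Bbbk}(Q,I)$. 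An $A$-linear map $h \colon N \to N'$ preserves the idempotent decomposition and therefore restricts to maps $h_a \colon N_a \to N'_a$ that commute with the $\varphi_\alpha$; this makes $G$ a functor.

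The final, and most delicate, step is to produce natural isomorphisms $GF \cong \mathrm{id}$ and $FG \cong \mathrm{id}$. For $FG$ one uses that $Q$ is finite, so $\{\epsilon_a\}_{a \in Q_0}$ is a complete set of orthogonal idempotents with $\sum_{a \in Q_0} \epsilon_a = 1_A$; hence every $A$-module $N$ decomposes as $N = \bigoplus_{a \in Q_0} N\epsilon_a = \bigoplus_{a \in Q_0} (GN)_a = FGN$ as vector spaces, and one checks arrow by arrow that the two $A$-actions agree, giving a natural isomorphism. For $GF$, starting from $M$ one has $(GFM)_a = (FM)\epsilon_a = M_a$, and the arrow map of $GFM$ attached to $\alpha$ is right multiplication by $\alpha$, which on $FM$ is precisely $\varphi_\alpha$; so $GFM = M$ canonically and naturally in $M$. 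I expect the real difficulty to be bookkeeping rather than conceptual: carefully checking that the $\Bbbk Q$-action defining $FM$ is associative and factors through $I$ using only the imposed relations, pinning down exactly where admissibility is needed (namely to keep $A$, and hence all modules in sight, finite dimensional), and verifying that the comparison maps above are genuinely natural, i.e. compatible with the morphism assignments of $F$ and $G$.
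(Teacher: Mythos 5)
The paper does not give a proof of this statement at all: it is quoted verbatim as Theorem 1.6 of the cited reference \cite{a06} (Assem--Simson--Skowro\'nski) and used as a black box. So there is no paper proof to compare against. That said, your proposal is a correct rendition of the standard textbook argument from that reference: build $F$ by forming $\bigoplus_{a} M_a$ with the path action, build $G$ by peeling off $N\epsilon_a$ and letting arrows act by right multiplication, and verify both composites are naturally isomorphic to the identity using the complete orthogonal system of idempotents $\{\epsilon_a\}$ (which sums to $1_A$ precisely because $Q_0$ is finite). One small imprecision: you invoke admissibility of $I$ to conclude $FM$ is finite dimensional, but that follows already from $Q_0$ finite and each $M_a$ finite dimensional; admissibility is really what makes $A$ itself finite dimensional and what makes the equivalence restrict cleanly to the finite-dimensional subcategories on both sides, so it is worth stating the role of admissibility that way. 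Otherwise the approach and the details are exactly what the cited source does.
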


\begin{ex}\label{example:jacobianalgmodule1}
Consider the Jacobian algebra computed in Example \ref{example:jacobianalg}. An example of an indecomposable Jacobian algebra module is given by 

\[\begin{tikzcd}
\Bbbk \arrow[r, "1"] & \Bbbk \arrow[d, "1"]\\
0 \arrow[u, "0"] & \Bbbk \arrow[l, "0"]
\end{tikzcd}\]

Note that the assignment of linear maps gives that a path given by the composition of any three consecutive arrows is 0 which is forced by the relations in the Jacobian ideal.
\end{ex}

\begin{ex}\label{example:jacobianalgmodule2}
Consider another quiver $Q$ shown below:

\[\begin{tikzcd}
 & 3 \arrow[dr, "\delta"] & 5 \arrow[l, swap, "\beta"]\\
1 \arrow[r, swap, "\nu"] & 2 \arrow[u, "\epsilon"] & 4 \arrow[u, swap, "\alpha"] \arrow[l, "\gamma"]
\end{tikzcd}\]

Then define the potential $W = \alpha\beta\delta + \epsilon\delta\gamma$. Taking cyclic derivatives of $W$, we obtain the Jacobian ideal $I = \langle \beta\delta, \delta\alpha, \delta\gamma, \epsilon\delta, \alpha\beta+\gamma\epsilon\rangle$. An example of an indecomposable module over the Jacobian algebra $\Bbbk Q/I$, or equivalently, a representation of $(Q,W)$ is given by

\[\begin{tikzcd}
 & \Bbbk \arrow[dr, "0"] & \Bbbk \arrow[l, swap, "1"]\\
\Bbbk \arrow[r, swap, "(1~0)^T"]  & \Bbbk^2  \arrow[u, "(0~1)"] & \Bbbk \arrow[u, swap, "1"] \arrow[l, "(0~1)^T"]
\end{tikzcd}\]

\end{ex}

Note that in Example \ref{example:jacobianalgmodule2}, we needed to make one of the arrows between non-zero vector spaces the zero map in order for it to satisfy the relations in the Jacobian ideal. Namely the arrow $3 \xrightarrow{\delta} 4$ was 0. We call such an arrow in a Jacobian algebra module \textbf{singular}. Such arrows will become relevant when defining our dimer model in Section \ref{section:dimers}.\allowbreak

\vspace{1em}

We now explain how to uniquely associate a quiver with potential or Jacobian algebra module to an arc in a triangulated surface following \cite{d17}. In particular, we emphasize that an arc overlayed on a triangulation of a surface uniquely determines an indecomposable Jacobian algebra module. \allowbreak
\vspace{1em}

Let $T$ be an ideal initial triangulation of a surface $(S,M)$ and let $(Q_T, W_T)$ be its associate quiver with potential. Overlay a plain oriented arc $\gamma \not\in T$ on $(S,M)$ and we explain how to produce a quiver with potential representation $M_\gamma$. The dimension vector of this representation is simply given by the crossing vector of $\gamma$; that is, 

$$\dim(M_\gamma)_i = \#\{ \text{intersections with }\gamma \text{ with arc }\tau_i \in T\}/\sim$$

\noindent where $\sim$ is up to isotopy. The maps are given by comparing the segments of $\gamma$ between arrows of the quiver. Let $\alpha: i \to k \in Q_1$, and let $r_1, \dots, r_s$ be the intersections of $\gamma$ with $\tau_i \in T$ and let $q_1, \dots, q_t$ be the intersections of $\gamma$ with $\tau_k \in T$. Let $[r_a,q_b]$ denote the segment of $\gamma$ between the intersection points $r_a$ and $q_b$. The map $\varphi_\alpha$ given by the matrix $N_{\varphi_\alpha} = (n_{a,b}) \in \text{Mat}_{s+t,s+t}(\mathbb{Z})$ is defined by

$$n_{b,a} = \begin{cases}
1 &\text{ if the interior of } [r_a,q_b] \text{ is contained in one triangle of }T\\
1 &\text{ if there is a segment } [q_b, q_{b'}] \text{ that surrounds the puncture clockwise }\\
&\text{ and the interior of }[r_a,q_{b'}] \text{ is contained in one triangle of }T\\
0 &\text{ otherwise}.
\end{cases}$$

\begin{ex}\label{ex:puncturerepresentation}
Consider the triangulation, and quiver with potential $W(T) = bcd + fgh+ eij + def$ from Example \ref{ex:quiverwithpotential} in Figure \ref{fig:puncturedrep}. Its Jacobian ideal is given by 

$$I_W = \langle cd, db, bc+ef, fd _ ij, de+gh, hf, hg,je,ei \rangle.$$

Consider the \textcolor{red}{red} arc $\gamma$ overlayed on $T$ with the specified orientation. It crosses arcs, 2,4,5,3,2 of $T$ in order -- giving that the crossing vector of $\gamma$ or the dimension vector of $M_\gamma$ is $\underline{d} = (0,2,1,1,1,0,0)$. The maps between the one-dimensional vector spaces are all the identity except the arrow $4 \to 3$ because the interior of the segment $[r_1,q_1]$ is contained in two triangles. This gives that this arrow is singular. \\

The arrow $2 \to 1$ is given by the matrix $(0~1)$ as the segment $[q_1,r_1]$ is not contained in a single triangle whereas the segment $[q_2,r_1]$ is. Similarly, the arrow $2 \to 4$ is given by the matrix $(1~0)$ as the segment $[q_1,r_1]$ is contained in a single triangle whereas the segment $[q_2,r_1]$ is not. The arrow $3 \to 2$ is given by the matrix $(1~1)^T$ as $[r_1,q_2]$ is contained in a single triangle and $[q_1,q_2]$ surrounds the puncture clockwise and the interior of $[r_1,q_2]$ is contained in a single triangle. The full representation is shown on the right of Figure \ref{fig:puncturedrep}.
\end{ex}

When $\gamma$ is tagged, the process for producing an indecomposable quiver with potential representation from $\gamma^{\Bowtie}$ is quite similar to the above. The dimension vector is given by the crossing vector as before as defined in Definition \ref{defn:taggedcrossingvector}. The maps are slightly modified, imagining replacing the tagged arc with a lollipop as in the rightmost picture in Figure \ref{fig:idealvstagged}. \allowbreak
\vspace{1em}
\end{subsection}

The one last piece of representation theory we will need is the notion of the $F$-polynomial in terms of quiver representations.

\begin{thm}\cite{dwz}
The $F$-polynomial as defined in Definition \ref{defn:fpoly} can be expressed in terms of finite dimensional modules over the Jacobian algebra. Namely, we have that for any finitely generated module $M$ of the Jacobian algebra that 

$$F_M = \sum_{\underline{e}} \chi(Gr_{\underline{e}}(M)) \underline{y}^{\underline{e}} $$

\noindent where the sum is taken over all dimension vectors $\underline{e}$ indexing submodules $N$ of $M$ and $\chi(Gr_{\underline{e}}(M))$ is the Euler characteristic of the quiver Grassmannian of all submodules $N \subset M$ with dimension vector $\underline{e}$.
\end{thm}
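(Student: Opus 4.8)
The statement is a known theorem of Derksen--Weyman--Zelevinsky, so the ``proof'' here should really be an indication of how one recovers it from the machinery already assembled in the preliminaries, together with the interpretation of the hatted variables $\hat y_j$. The plan is to start from Definition \ref{defn:fpoly}, which tells us that with principal coefficients the cluster variable $x$ attached to vertex $\ell$ factors as $x = F_\ell(\hat y_1,\dots,\hat y_n)\, x_1^{g_1}\cdots x_n^{g_n}$, and to compare this with the expansion of $x$ obtained from the CC-type formula (Caldero--Chapoton / Derksen--Weyman--Zelevinsky) in terms of the indecomposable module $M = M_\gamma$ associated to the arc $\gamma$ by the recipe in Section \ref{subsection:alg}. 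Concretely, one writes the cluster character
\[
x \;=\; \sum_{\underline e} \chi\big(\mathrm{Gr}_{\underline e}(M)\big)\, \prod_{i=1}^n x_i^{(\text{linear form in }\underline e,\ \underline g,\ B)}\, \prod_{j=1}^n x_{n+j}^{e_j},
\]
and then regroups the monomials so that the $x_{n+j}$-dependence and the part of the $x_i$-dependence coming from the exchange matrix $B$ assemble into powers of $\hat y_j = x_{n+j}\prod_i x_i^{\#\{i\to j\}}$. The coefficient of $\underline y^{\underline e}$ is then forced, by the uniqueness clause in Definition \ref{defn:fpoly}, to be $\chi(\mathrm{Gr}_{\underline e}(M))$, which is exactly the asserted formula for $F_M$.

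First I would recall the decorated representation / cluster character setup of \cite{dwz}: to the quiver with potential $(Q_T, W_T)$ and the mutation sequence producing $x_\ell$ one associates a decorated representation $\mathcal M$, and the Derksen--Weyman--Zelevinsky cluster character $C(\mathcal M)$ reproduces $x_\ell$. Second, I would isolate the $F$-polynomial part of $C(\mathcal M)$: by their Theorem on the factorization of the cluster character, $C(\mathcal M) = (\text{monomial in }x_i)\cdot F_{\mathcal M}(\hat y)$ where $F_{\mathcal M}(\hat y) = \sum_{\underline e}\chi(\mathrm{Gr}_{\underline e}(M))\,\hat y^{\underline e}$, and the monomial prefactor is $x^{\underline g}$. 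Third, matching this against Definition \ref{defn:fpoly} and invoking uniqueness of $(F_\ell,\underline g_\ell)$ gives $F_\ell = F_{\mathcal M}$, i.e. the displayed identity with $M = M_\gamma$. Fourth, one should note that the hypotheses needed for all of this --- that $(Q_T,W_T)$ is a nondegenerate quiver with potential and that $M_\gamma$ is a genuine module over its Jacobian algebra --- are exactly what Section \ref{subsection:alg} (via \cite{lf09}, \cite{d17}, and Theorem \ref{thm:equivofcats}) guarantees, so no extra work is needed on that front.

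The main obstacle, and the only place real care is required, is the bookkeeping that turns the raw $(x_i, x_{n+j})$-monomials of the cluster character into honest powers of the $\hat y_j$: one has to check that the exponent of each $x_i$ splits cleanly as (a $\underline g$-vector contribution) plus ($\sum_j \#\{i\to j\}\,e_j$), so that the $\hat y$-regrouping is consistent and leaves behind precisely $x^{\underline g}$. This is the content of the factorization theorem in \cite{dwz} and is a direct computation with the definition of $\hat y_j$; I would state it and cite it rather than reproduce it. A secondary subtlety worth a sentence is that in the non-acyclic case the crossing vector of $\gamma$ may not equal its denominator vector, so one must be careful that the module $M_\gamma$ entering $F_{M_\gamma}$ is the one built from the \emph{crossing} vector (as in Section \ref{subsection:alg}), which is indeed the dimension vector of the DWZ representation; this is the point the rest of the paper will exploit, but for the statement itself it is just a matter of keeping the identifications straight.
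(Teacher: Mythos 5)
The paper does not prove this statement at all: it is stated as a theorem and attributed directly to \cite{dwz}, with no argument supplied. Your proposal does essentially the same thing — it correctly recognizes that this is a theorem of Derksen--Weyman--Zelevinsky and that the honest ``proof'' is to cite their cluster-character factorization, and your sketch of how that factorization (cluster character equals $g$-vector monomial times $F$-polynomial in the $\hat y_j$, with uniqueness from Definition~\ref{defn:fpoly} pinning down $F_\ell = F_{\mathcal M}$) recovers the displayed formula is accurate. The additional remarks you make — that the nondegeneracy of $(Q_T,W_T)$ and the module structure of $M_\gamma$ are supplied by \cite{lf09}, \cite{d17}, and Theorem~\ref{thm:equivofcats}, and that in the non-acyclic case one must use the crossing vector rather than the denominator vector as the dimension vector of $M_\gamma$ — are correct and in fact foreshadow issues the paper addresses later. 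In short: your treatment is correct and matches the paper's stance (cite \cite{dwz}), with a useful but optional expansion of where the citation comes from.
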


\end{section}

\begin{section}{Dimer Configurations} \label{section:dimers}

In this section, we describe a combinatorial model for how to obtain the $F$-polynomial associated to any type $D_n$ cluster algebra. Our model assigns a planar, bipartite graph to a type $D_n$ quiver. Each of the vertices in the graph correspond to a $2k$-gon, we call a tile, where $k$ is the degree of the vertex in the quiver and each arrow in the quiver gives an assignment of how to attach the tiles together. We use the data of the quiver $Q$ and a crossing vector $\underline{d}$ to assign a minimal mixed dimer configuration to this graph. From this assignment, we create a poset of mixed dimer configurations each of which corresponds to a monomial of the $F$-polynomial associated to $\underline{d}$ and whose coefficient is determined by the number of cycles appearing in the mixed dimer configuration. Our methods extend our previous model for the acyclic case \cite{prequel}, where we were able to utilize Thao Tran’s work \cite{tran}, to the non-acyclic case.
\allowbreak
\vspace{1em}

Let $Q= (Q_0,Q_1)$ be any type $D_n$ quiver. Recall there are four types of quivers that are mutation equivalent to the type $D_n$ Dynkin diagram categorized by Vatne in Figure \ref{fig:vatnetypes}. Let $\mathcal{A}(Q)$ be the associated cluster algebra. We first define the \textbf{base graph} associated to $Q$. 

\begin{subsection}{Defining the Base Graph}
\label{base graph}
\begin{defn} \label{defn:uncontractedbasegraph}
Let $i \in Q_0$ be a vertex of degree $k$ in our quiver. Associate a $2k$-gon called tile $i$ to this vertex. Each tile in an even-sided polygon, hence it admits a bipartite coloring. We attach the tiles based on this black and white coloring with the following convention:
\allowbreak  \vspace{1em}

If $i,j \in Q_0$ such that $i \to j \in Q_1$, attach tiles $i$ and $j$ by the convention that we ``see white on the right." Call the resulting graph $\Bar{G}$, the \textbf{uncontracted base graph associated to $Q$}.

\begin{ex}\label{ex:uncontractedbasegraph}
\begin{figure}
    \centering
    \includegraphics[width=\textwidth]{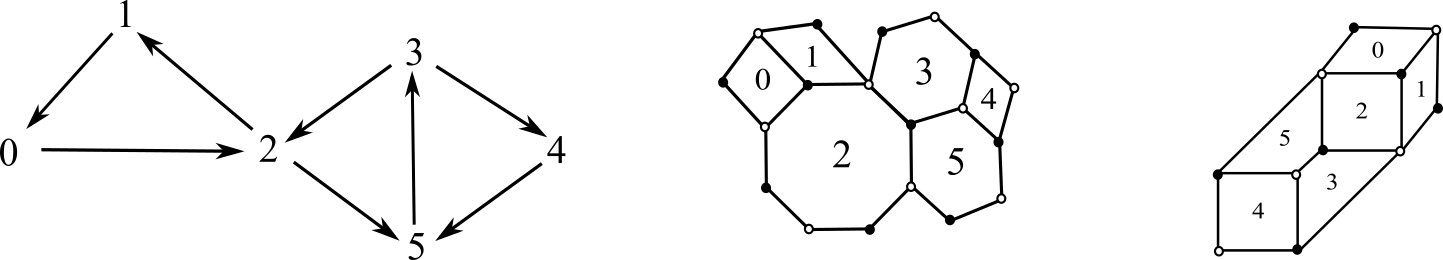}
    \caption{A quiver, its uncontracted and contracted base graphs.}
    \label{fig:basegraphexample}
\end{figure}
Consider the $D_7$ quiver on the left of Figure \ref{fig:basegraphexample}. The associated uncontracted base graph is given by the middle of Figure \ref{fig:basegraphexample}.
\end{ex}

\begin{rmk}\label{rmk:n-star}
For an $n$-cycle in $Q$, there is a more efficient process to obtain the uncontracted base graph. Namely, begin constructing the graph with an ``$n$-star" i.e. $n$ line segments attached at one of their endpoints and use these edges to create the rest of the tiles for vertices in the $n$-cycle in $Q$. If the $n$-cycle is clockwise, the vertex of the $n$-star is white. If the $n$-cycle is counterclockwise, the vertex of the $n$-star is black. 
\end{rmk}

From this graph $\Bar{G}$, we create a refinement of this graph via a local move called \textbf{double-edge contraction} that will result in a graph comprised of only squares and hexagons. 

\begin{defn} \label{defn: basegraph}
\textbf{Double-edge contraction} is a graph transformation that takes two edges in a graph $G$ and contracts them to a point. Locally, the transformation is:
\begin{center}
\begin{tikzpicture}
\node at (-2,0) (0){$\circ$};
\node at (-1,0) (1){$\bullet$};
\node at (0,0) (2){$\circ$};
\node at (3,0) (3){$\circ$};
\draw[-, thick] (0) to (1);
\draw[-, thick] (1) -- (2);
\end{tikzpicture}
\end{center}

\begin{center}
\begin{tikzpicture}
\node at (-2,0) (0){$\bullet$};
\node at (-1,0) (1){$\circ$};
\node at (0,0) (2){$\bullet$};
\node at (3,0) (3){$\bullet$};
\draw[-, thick] (0) to (1);
\draw[-, thick] (1) -- (2);
\end{tikzpicture}
\end{center}
\end{defn}

In $\Bar{G}$, for any tile $i$ that is a $2k$-gon for $k \geq 3$, perform double-edge contraction on any edges that are boundary edges i.e. edges that only belong to tile $i$ and neighbors no other tiles. Repeat this process as many times as possible. Call the resulting graph $G$ the \textbf{base graph associated to $Q$}.
\end{defn}

\begin{ex}\label{example:contractedbasegraph}
Taking the uncontracted base graph from Example \ref{ex:uncontractedbasegraph}, we perform double edge contraction which turns the octagon tile 2 into a square, and the hexagons 3 and 5 into squares. The resultant base graph is illustrated on the right of Figure \ref{fig:basegraphexample}.
\end{ex}

\begin{rmk}
Although this refinement of the uncontracted base graph $\Bar{G}$ to the base graph $G$ does not affect the combinatorics of our model, it yields a graph that is easier to work with in practice. 
\end{rmk}

\end{subsection}

\begin{subsection}{Minimal Mixed Dimer Configuration} \label{subsection: minimalmatching}
We now associate a minimal mixed dimer configuration to the base graph $G$ of a quiver $Q$. In order to define the minimal mixed dimer configurations, we first define single dimer, double dimer, and mixed dimer configurations. We tie this back to cluster algebras by using another piece of global data - the crossing vector $\underline{d}$ associated to a cluster variable $\underline{x}$ in $\mathcal{A}(Q)$. The complete catalog of all crossing vectors in type $D_n$ cluster algebras can be found in Appendix \ref{section:d-vectors}. \allowbreak
\vspace{1em}

For the following definitions, suppose that $G$ is an arbitrary planar bipartite graph.

\begin{defn}
 A \textbf{dimer configuration}, also known as a (perfect) matching, is a subset $D \subset E(G)$ such that every vertex $v \in V(G)$ is contained in exactly one edge $e \in D$.
\end{defn}
\noindent For example, consider the red edges in the following graph:

\begin{center}
    \includegraphics[scale=.25]{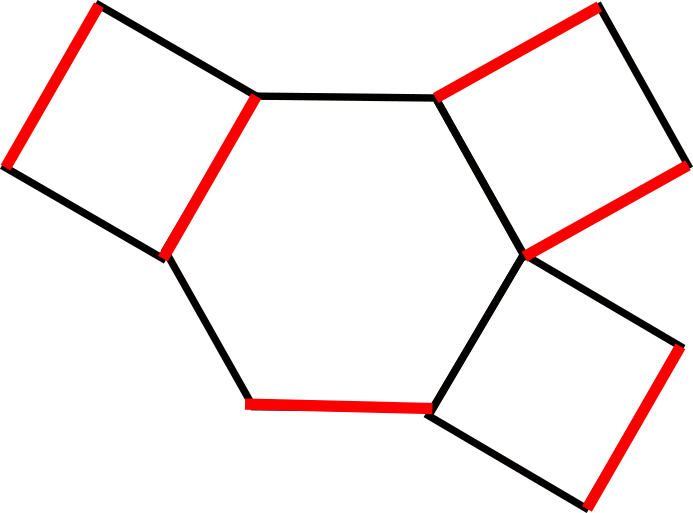}
\end{center}
 
\begin{defn} 
A \textbf{double dimer configuration} $D'$ of $G$ is a multiset of the edges of $G$ such that every vertex $v \in V(G)$ is contained in exactly two edges $e,e' \in D'$.
\end{defn}
\noindent For example, consider the red edges in the following graph:

\begin{center}
    \includegraphics[scale=.25]{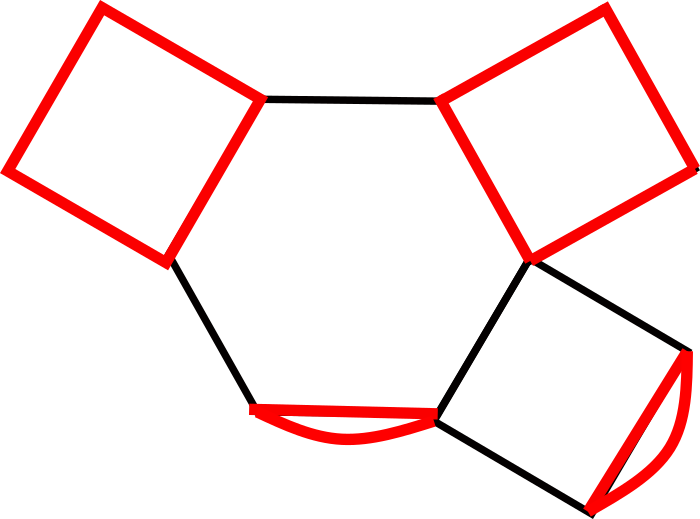}
\end{center}

\begin{defn} \label{def:mixed_dimer}
Let $\underline{d}$ be an $n$-tuple whose entries are each $0$, $1$, or $2$, i.e. $\underline{d} \in \{0,1,2\}^n$. A \textbf{mixed dimer configuration} $M = M(\underline{d)}$ of $G$ is a multiset of the edges of $G$ such that every vertex $v \in V(G)$ is contained in zero, one, or two edges in $M$. Furthermore, we say that $M$ satisfies the {\bf valence condition} with respect to $\underline{d}$ if
\begin{itemize}
    \item Each vertex incident to a tile labeled $i$ with $d_i = 2$ is contained in two edges in $M$.
    \item Each vertex incident to a tile labeled $i$ with $d_i = 1$ is contained in at least one edge in $M$.
\end{itemize}

\begin{ex}\label{ex:mixeddimer}
Let $\underline{d} = (d_0, d_1, d_2, d_3) = (2,2,1,0)$. An example of a mixed dimer configuration can be seen by the red edges highlighted in the following graph:
\begin{center}
\includegraphics[scale=.2]{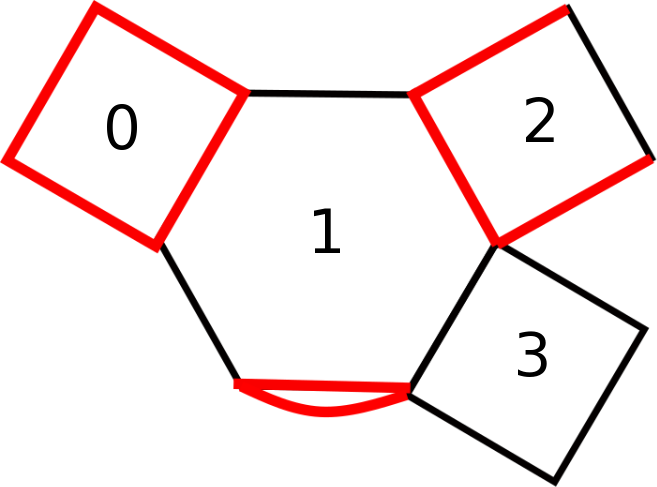}
\end{center}
\end{ex}
\end{defn}

The crossing vector $\underline{d}$ associated to some arc $\gamma$ in the once-punctured $n$-gon will be what determines the mixed dimer configurations in our model. Note that $\underline{d}$ is also the dimension vector of a unique representative of an isomorphism class of an indecomposable quiver with potential representation, $M_\gamma$. \allowbreak
\vspace{1em}

With this, in order to define the minimal matching, we present lemmas about the structure of crossing vectors $\underline{d}$ that will imply the well-definition of our construction. We postpone the proofs of Lemma \ref{lemma:d_vector_supp_connected} and Lemma \ref{lemma:d_vector_2} until Appendix \ref{section:d-vectors}. For the following lemmas, let $T$ be a triangulation of the once-punctured $n$-gon and $Q$ be the corresponding quiver.

\begin{lemma}\label{lemma:d_vector_supp_connected}
Suppose $\gamma$ is some arc not in $T$ and let $\underline{d} = \text{cross}(\gamma)$. $Q^{\supp(\underline{d})}$, the induced subquiver using vertices $i \in Q_0$ with $d_i > 0$, is connected.
\end{lemma}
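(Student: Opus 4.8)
The plan is to show that $Q^{\supp(\underline{d})}$ is connected by arguing directly on the surface. The crossing vector $\underline{d} = \text{cross}(\gamma)$ records, for each arc $\tau_i \in T$, how many times $\gamma$ crosses $\tau_i$; the support $\supp(\underline{d})$ is exactly the set of arcs of $T$ that $\gamma$ meets. First I would fix a minimal-position representative of $\gamma$ and walk along it from one endpoint to the other, recording the ordered sequence of arcs of $T$ that $\gamma$ crosses, say $\tau_{i_1}, \tau_{i_2}, \dots, \tau_{i_\ell}$ (with repetitions possible when $d_{i_j} = 2$). The key geometric observation is that between two consecutive crossings $\tau_{i_j}$ and $\tau_{i_{j+1}}$, the arc $\gamma$ passes through the interior of a single ideal triangle $\Delta$ of $T$ having both $\tau_{i_j}$ and $\tau_{i_{j+1}}$ as sides (or through a self-folded triangle, which is the one subtlety — see below). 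By the definition of $Q_T$, two sides of a common ideal triangle are connected by an arrow in $Q_T$; hence $i_j$ and $i_{j+1}$ are adjacent in $Q^{\supp(\underline{d})}$ for every $j$, and walking along $\gamma$ exhibits a path in $Q^{\supp(\underline{d})}$ through every vertex of the support.

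More carefully, I would split into the two stretches of $\gamma$ adjacent to its endpoints and the interior stretches. The interior argument is the paragraph above. For the two end stretches, $\gamma$ emanates from a marked point $p$ (a vertex of the $n$-gon, or the puncture) and its first crossing $\tau_{i_1}$ is a side of a triangle incident to $p$; this needs no adjacency claim since it only concerns a single vertex $i_1$ of the support. Finally, one checks that every index $i$ with $d_i > 0$ appears somewhere in the sequence $i_1, \dots, i_\ell$ — which is immediate from $d_i = e^{\Bowtie}(\tau_i, \gamma) > 0$ — so the path constructed really does visit all vertices of $Q^{\supp(\underline{d})}$, proving connectedness.

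The main obstacle will be the self-folded triangles, and more generally the behaviour near the puncture, since Lemma statements here are explicitly for the once-punctured $n$-gon with an ideal (not tagged) initial triangulation. When $\gamma$ enters a self-folded triangle with radius $r$ and loop $\ell$, it may cross the loop $\ell$, wind toward the puncture, and cross the radius $r$ (or cross $\ell$ twice); one must verify that the relevant indices are still adjacent in $Q_T$ — recall the quiver convention attaches arrows from $r$ and $\ell$ to the adjacent arc(s) but not between $r$ and $\ell$ — and check that the crossing pattern forced by $e^{\Bowtie}$ (which for a tagged $\gamma$ counts crossings with $\ell$ rather than $r$) is consistent with an adjacency path. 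I would handle this by a short case analysis using Vatne's classification (Theorem \ref{thm:vatne}): in types I--IV the puncture sits inside a specific local configuration, and the catalog in Appendix \ref{section:d-vectors} already enumerates exactly which crossing vectors arise, so one can simply verify connectedness of $Q^{\supp(\underline{d})}$ against that finite list. Indeed, since the proof is deferred to Appendix \ref{section:d-vectors}, the cleanest route is to let the appendix's explicit catalog do the work: for each family of crossing vectors listed there, the support is visibly an interval along a path or a connected piece of an oriented cycle in the Vatne quiver, hence connected.
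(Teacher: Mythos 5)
Your proposal takes essentially the same approach as the paper's Appendix proof: walk along $\gamma$, record the ordered sequence of arcs of $T$ it crosses, and use the fact that consecutive crossings lie in a common ideal triangle (hence are adjacent in $Q_T$) to produce a connecting walk through every vertex of $\supp(\underline{d})$. You are actually more careful than the paper on the self-folded triangle subtlety — the paper's proof flatly asserts that two sides of a common triangle are joined by an arrow in $Q_T$, which fails for the radius–loop pair, so the case analysis you flag (or the appeal to the Appendix catalog) is a genuine point in your favor rather than an overcomplication.
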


\begin{lemma}\label{lemma:d_vector_2}
Suppose $\gamma$ is some arc not in $T$ such that there exists some arc $\tau \in T$ that $\gamma$ crosses twice. Let $\underline{d} = \text{cross}(\gamma)$. $Q^{\supp_2(\underline{d})}$, the induced subquiver using vertices $i \in Q_0$ with $d_i = 2$, is a connected tree.
\end{lemma}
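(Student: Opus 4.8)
The plan is to work geometrically on the once-punctured $n$-gon $(S,M)$, reducing the statement to a nesting property of sub-arcs of $\gamma$ and then ruling out, via Vatne's classification (Theorem~\ref{thm:vatne}), that $\supp_2(\underline{d})$ contains a cycle of $Q$. First I would fix isotopy representatives of $\gamma$ and of every arc of $T$ that pairwise realize the minimal intersection numbers, so that no two of them cobound a bigon; if $\gamma$ is tagged (or is a loop at a boundary point) I replace it by its ``lollipop''/loop representative, so that in every case $d_i$ counts transverse intersections with $\tau_i$ and $\gamma$ is a single simple arc with endpoints on $\partial S$. The key preliminary observation is the geometric meaning of a $2$: if $d_i=2$ and $\gamma$ meets $\tau_i$ at $p,q$, let $\sigma_i\subset\gamma$ be the sub-arc between $p$ and $q$ and $\tau_i'\subset\tau_i$ the sub-arc between them; then $\sigma_i\cup\tau_i'$ is a simple closed curve, and since $\pi_1$ of a once-punctured disk is $\mathbb{Z}$ it bounds either a disk or a once-punctured disk. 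The first case produces, after passing to an innermost one, a bigon between $\gamma$ and $\tau_i$, contradicting minimality; hence $\sigma_i\cup\tau_i'$ bounds a once-punctured disk $R_i$ with $P\in R_i$. (This is also why no entry of $\underline{d}$ exceeds $2$, and why $d_i\le 1$ whenever $\tau_i$ has $P$ as an endpoint: the stretch of such a $\tau_i$ from $P$ to its first intersection with $\gamma$ would have to exit $R_i$, so $P\notin R_i$, contradicting the above.)

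Next I would prove the \emph{nesting lemma}: for $i\neq j$ in $\supp_2(\underline{d})$ the sub-arcs $\sigma_i,\sigma_j$ are comparable as sub-intervals of $\gamma$, equivalently $R_i\subseteq R_j$ or $R_j\subseteq R_i$. If not, then since $\tau_i,\tau_j\in T$ do not cross and $\gamma$ meets each only at its two prescribed points, chasing the intersections of $\partial R_i=\sigma_i\cup\tau_i'$ with $\partial R_j=\sigma_j\cup\tau_j'$ and using that $\gamma$ is simple with endpoints on $\partial S$ while $P\in R_i\cap R_j$ forces $\tau_j$ to cross $\partial R_i$ an odd number of times, which is impossible. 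Hence the regions form a chain $R_{i_1}\subsetneq\cdots\subsetneq R_{i_m}$ with $\{i_1,\dots,i_m\}=\supp_2(\underline{d})$. Connectedness of $Q^{\supp_2(\underline{d})}$ then follows: the region between consecutive $R_{i_k}$ and $R_{i_{k+1}}$ meets no arc of $T$ in its interior — any such arc would itself be crossed twice by $\gamma$, with its region strictly between $R_{i_k}$ and $R_{i_{k+1}}$, contradicting consecutivity (here one uses another odd-crossing argument, exactly as above) — so $\tau_{i_k}$ and $\tau_{i_{k+1}}$ are two sides of a common ideal triangle of $T$, hence joined by an arrow in $Q$, and $i_1-i_2-\cdots-i_m$ is a connected subquiver.

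It remains to show $Q^{\supp_2(\underline{d})}$ is acyclic. By Theorem~\ref{thm:vatne} (together with the structure of quivers from triangulated surfaces) every simple cycle of $Q$ either bounds an internal triangle of $T$ or consists of the $k\ge 3$ arcs incident to the puncture $P$. The puncture cycle cannot lie in $\supp_2(\underline{d})$, since by the remark in the first paragraph no arc of $T$ incident to $P$ is ever crossed twice by $\gamma$. For an internal triangle $\Delta$ with sides $\alpha,\beta,\delta$, suppose all three lay in $\supp_2(\underline{d})$; by nesting assume $R_\alpha\subseteq R_\beta\subseteq R_\delta$. One checks that the open arc $\sigma_\alpha$ misses $\partial\Delta$, so it lies either inside $\overline{\Delta}$ — forcing $R_\alpha\subseteq\overline{\Delta}$ and hence $P$ in the interior of $\Delta$, impossible since faces of $T$ contain no puncture — or inside the component $U$ of the complement of $\overline{\Delta}$ adjacent to $\alpha$; in the latter case $P\in U$, the two stretches of $\sigma_\beta$ beyond $\sigma_\alpha$ must enter $\Delta$ across $\alpha$ and leave across $\beta$, and then the next stretch of $\gamma$ is trapped in the complementary component adjacent to $\beta$ even though it must still reach the intersections of $\gamma$ with $\delta$ — a contradiction. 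Thus $\supp_2(\underline{d})$ contains no cycle of $Q$, and a connected acyclic subquiver is a tree.

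The step I expect to be the main obstacle is this last one: the careful combinatorial-topological bookkeeping needed to exclude an internal triangle from $\supp_2(\underline{d})$ — tracking which side of each arc contains $P$, and the at most three chords $\gamma$ can make inside $\Delta$ — together with the degenerate cases where $\gamma$ is a loop at a boundary marked point or a tagged arc at $P$. If one wishes to bypass this, the alternative — and the route actually taken in Appendix~\ref{section:d-vectors} — is to read $\supp_2(\underline{d})$ off the explicit tables of crossing vectors in each Vatne type and verify the claim directly.
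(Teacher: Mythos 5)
Your proposal is correct in its conclusions and takes a genuinely different route from the paper. The paper's own proof (Appendix~\ref{section:topologylemmas}) is short and leans almost entirely on the exhaustive catalog of crossing vectors in Appendix~\ref{section:d-vectors}: it fixes isotopy representatives $\alpha,\beta$ for the two halves of $\gamma$ split at the innermost doubly-crossed arc $\tau_c$, observes that any further doubly-crossed arcs must form a connected chain through $c$, and then rules out cycles by noting (i) in the type~$A_m$ part one would need all three sides of an internal triangle crossed twice, which cannot happen, and (ii) in the type~$D_n$ part, in types II/III a 2 can occur on only one of $a,b,c,d$ and in type~IV the spokes of the wheel cannot be crossed twice. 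In other words, the paper essentially reads the lemma off the classification, with the hard topological content already absorbed into the catalog. You instead give a self-contained topological argument: each $d_i=2$ produces a once-punctured region $R_i$ with $P\in R_i$ (else an innermost bigon contradicts minimal position), the $R_i$ are totally ordered by inclusion, consecutive regions share a triangle (giving connectedness), and acyclicity comes from $d_i\le 1$ on arcs incident to $P$ plus the impossibility of crossing all sides of an internal triangle twice. This buys independence from the catalog and makes the bound $d_i\le 2$ and the restriction on arcs at $P$ corollaries of the same nesting picture, at the cost of more intricate intersection-chasing.

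One caution on your argument rather than its conclusion: the justification you give for $d_i\le 1$ when $\tau_i$ is incident to $P$ — ``the stretch of $\tau_i$ from $P$ to its first intersection with $\gamma$ would have to exit $R_i$'' — is not quite right as stated. That stretch is disjoint from $\sigma_i$ and from $\tau_i'$ (except at its endpoint), so nothing forces it to cross $\partial R_i$, and in fact it can sit entirely inside $R_i$ without contradiction. The fact itself is true and standard (a radial arc to the puncture can be crossed at most once in minimal position, e.g.\ by cutting the punctured disk along $\tau_i$ to get an honest disk and observing that two crossings would produce a bigon in the cut surface), but the sentence as written does not prove it. Similarly, the nesting lemma and the internal-triangle exclusion are plausible and you flag them as the delicate part; they would need the odd-crossing and separation arguments spelled out carefully to be complete. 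You correctly identify at the end that the paper sidesteps all of this by citing the explicit catalog, which is indeed the route the authors take.
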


Now, we aim to define a minimal mixed dimer configuration associated to $(Q,\underline{d})$. Let $G(Q) = G$ be the base graph obtained by the process described in Definitions \ref{defn:uncontractedbasegraph} and \ref{defn: basegraph}. Let $\gamma$ be an arc such that $cross(\gamma) = \underline{d}$ and let $M_\gamma$ be the corresponding indecomposable representation.

\begin{defn} \label{defn: minimalmatching}
We begin by addressing the nuance in the cyclic case. As we saw in Example \ref{example:jacobianalgmodule2}, some of the arrows in $M_\gamma$ may be 0 between nonzero vector spaces. In order to correctly model the combinatorics, our model must consider these singular arrows in the definition of the minimal matching. For any singular arrow $i \to j$ in $M_\gamma$, enumerate the edge $e_{i,j}$ straddling tiles $i$ and $j$. Define the set of such edges we distinguish $\displaystyle Z = \cup_{i \to j \text{singular}} e_{i,j}$.

Let $G_1$ be the induced subgraph of $G$ using tiles $i$ with $d_i \geq 1$. Traversing clockwise along the boundary of the graph $G_1$, distinguish the edges that go black to white clockwise, call this set of edges $M(G_1)$. If $\underline{d}$ contains no 2's, then define $M(G_1) \cup Z  = M_-(\underline{d})$. 

If $\underline{d}$ contains at least one 2, then let $G_2$ be the induced subgraph of $G$ using tiles $i$ with $d_i = 2$. Traversing clockwise along the boundary of the graph $G_2$, distinguish the edges that go black to white clockwise, call this set of edges $M(G_2)$. Define $M(G_1) \sqcup M(G_2) \sqcup Z = M_-(\underline{d})$. We refer to $M_-(\underline{d})$ as the \textbf{minimal matching associated to $(Q, \underline{d}$)}

\end{defn} 

\begin{figure}[H]
    \centering
    \includegraphics[width=\textwidth]{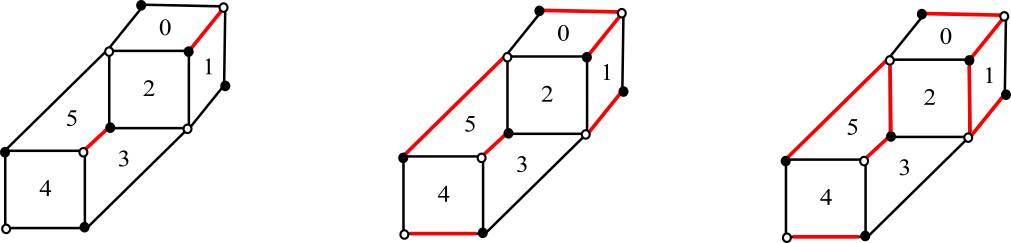}
    \caption{Phases of enumerating the edges in the minimal matching for Example \ref{example:contractedbasegraph}.}
    \label{fig:phasesminmatching}
\end{figure}

\begin{ex}\label{ex:minimalmatching}
Taking the base graph from Example \ref{example:contractedbasegraph}, and $\underline{d} = (1,1,2,1,1,1)$, the minimal matching $M_-(\underline{d})$ is shown in Figure \ref{fig:phasesminmatching}. The leftmost graphic shows the set $Z = e_{5,3} \cup e_{1,0}$, reflecting the singular arrows $5 \to 3$ and $1 \to 0$. The middle graphic represents then adding the edges in $M(G_1)$. The rightmost figure is the addition of the edges in $M(G_2)$ i.e. is the minimal matching for this choice of $(Q, \underline{d})$.
\end{ex}

Observe that the definition of the minimal matching $M_-(\underline{d})$ is well-defined. By Lemma \ref{lemma:d_vector_supp_connected}, we have that $G_1$ is connected; moreover, by Lemma \ref{lemma:d_vector_2}, we have that $G_2$ is connected. Hence, $M(G_1), M(G_2)$ are well-defined respectively.
\end{subsection}

\begin{subsection}{Poset of Mixed Dimer Configurations}
\label{sec:poset}
We now create the poset of mixed dimer configurations where each of the mixed dimer configurations appearing in this poset corresponds to a monomial that appears in the $F$-polynomial associated to $\underline{d}$. Namely, the minimal element of this poset corresponds to the minimal mixed dimer configuration defined in Section \ref{subsection: minimalmatching} and we define the poset relation as in Section 3.4 of \cite{prequel}. For more details and motivation, please refer to \cite{prequel}.

\begin{defn}\label{defn: posetrelation}
Fix a type $D_n$ quiver $Q$ and a base graph $G= G(Q)$. Let $\underline{d} \in \{0,1,2\}^n$ and let $D, D'$ be two mixed dimer configurations on $G$ with valence condition given by $\underline{d}$. We say $D$ covers $D'$, $D \lessdot D'$, if there exists a tile $i$ of $G$ such that $D'$ is obtained from $D$ by ``flipping" tile $i$ of $D$ i.e. exchanging highlighted edges black to white clockwise on tile $i$ in $D$ to white to black clockwise as shown in Figure \ref{fig:flip}. Implicit in this definition is that $d_i>0$.

\begin{figure}[H]
    \centering
    \includegraphics[scale=.25]{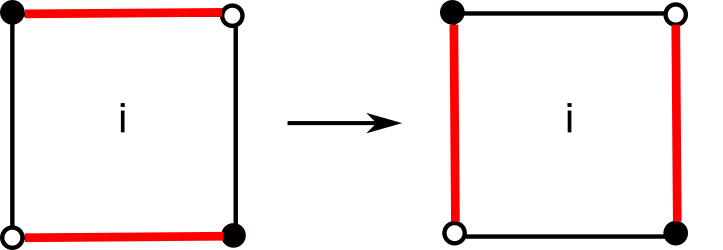}
    \caption{ }
    \label{fig:flip}
\end{figure}

\end{defn}

\begin{defn}\label{defn:largerposet}
Let $(\bar{P}, \leq)$ be the {\bf poset of mixed dimer configurations} that satisfy the valence condition and are reachable via a sequence of flips from $M_-(Q, \underline{d})$.  For two such mixed dimer configurations $D$ and $D'$, we say that $D \leq D'$ if there exists a sequence of allowable flips from $D$ to obtain $D'$.
\end{defn}

The poset $\bar{P}$ turns out to have more elements than the number of monomials in the $F$-polynomial associated to $\underline{d}$. As in \cite{prequel}, we put another condition on mixed dimer configurations that accurately reflects the cluster combinatorics by disallowing some mixed dimer configurations. We will let $(P, \leq) \subseteq (\bar{P}, \leq)$ be the subposet of mixed dimer configurations that satisfy the valence condition, are reachable via a sequence of allowable flips from $M_-$, and satisfy a condition known as being ``node monochromatic" we define in the next section.

\begin{subsubsection}{Node Monochromatic Mixed Dimer Configurations}
As in the acyclic case, we must define a special set of vertices of $G$ that we call ``nodes" to disallow certain mixed dimer configurations. These nodes allow us to disallow configurations that connect nodes of different color in order to correctly model the $F$-polynomial. In order to create paths in a mixed dimer configuration, we must have valence 2 on some vertices in $M_-(\underline{d})$. Hence, the definition of these nodes is only relevant when there is at least one 2 present in $\underline{d}$. 

Recall Vatne's classification of type $D_n$ quivers, described in Theorem \ref{thm:vatne}, which states that any type $D_n$ quiver consists of a type $D_n$ part and at least one type $A_m$ part. We first define nodes on the type $D_n$ part of the quivers. In types I, II and III, we define both \textcolor{red}{red} and \textcolor{blue}{blue} nodes; whereas in type IV, we just define \textcolor{blue}{blue} nodes. After this, we define \textcolor{olive}{green} nodes on the type $A_m$ part of the quiver, which applies to all types. 

\begin{rmk}
In the support of $\underline{d}$-vectors that contain a 2, types II and III degenerate to type I quivers when considering the induced subgraph on the quiver. In particular, the type $D_n$ parts of these quivers are acyclic on the support of $\underline{d}$. For details of how these types reduce to type I, refer to the classification of crossing vectors in these types found in Figures \ref{fig:2allfams} and \ref{fig:3allfams} in Appendix \ref{section:d-vectors}. So, in types I, II and III, nodes are defined in the same way and are very similar to the 6 nodes of three colors: \textcolor{red}{red}, \textcolor{blue}{blue} and \textcolor{olive}{green} as defined in \cite{prequel}. Namely, two pairs of these nodes are defined on the type $D_n$ part of the quiver, since tiles are disconnected in the induced base graph of the support of $\underline{d}$. 

However, in type IV quivers, the type $D_n$ part of the quiver that contains the central cycle is always fully supported in $\underline{d}$ when there is a 2 in the vector. So, in this case, we must define the nodes in a different way. We define 4 nodes of two colors: \textcolor{blue}{blue} and \textcolor{olive}{green} in this case because the type $D_n$ part of the induced subgraph is connected in this case. Hence, rather than having two pairs of nodes on the type $D_n$ part, we only have one pair of nodes in the type $D_n$ part. 
\end{rmk}

Let $\underline{d}$ be a crossing vector associated to a cluster variable $\underline{x}$ in $\mathcal{A}(Q)$. Let $Q_{\text{supp}}$ be the induced subquiver of $Q$ using vertices $i \in Q$ with $d_i > 0$ i.e. supported on $\underline{d}$. Suppose that $Q$ is type I, II or III. If $Q_{\text{supp}}$ contains an oriented cycle, then this cycle is necessarily in a type $A_m$ part of the model i.e. some $Q^{(\ell)}$ using Vatne's notation. By the surface model, the type $D_n$ part of the quiver in $Q_{\text{supp}}$ reduces to the case where we have a fork as in the classical Dynkin diagram. This allows us to define two pairs of nodes: two \textcolor{red}{red} nodes $\textcolor{red}{u, v}$ and two \textcolor{blue}{blue} nodes $\textcolor{blue}{w, x}$. Let $a$ and $b$ be the two forking vertices and let $c$ be the vertex that is connected to the rest of the type $A_m$ part of the quiver. Define the \textcolor{red}{red} nodes $\textcolor{red}{u,v}$ to be place on the two vertices of tile $a$ that are not shared with tile $c$. Define the \textcolor{blue}{blue} nodes $\textcolor{blue}{w,x}$ to be place on the two vertices of tile $b$ that are not shared with tile $c$. See Figure \ref{fig:tailnodes}. 

\begin{figure}[H]
    \centering
    \includegraphics[scale=.35]{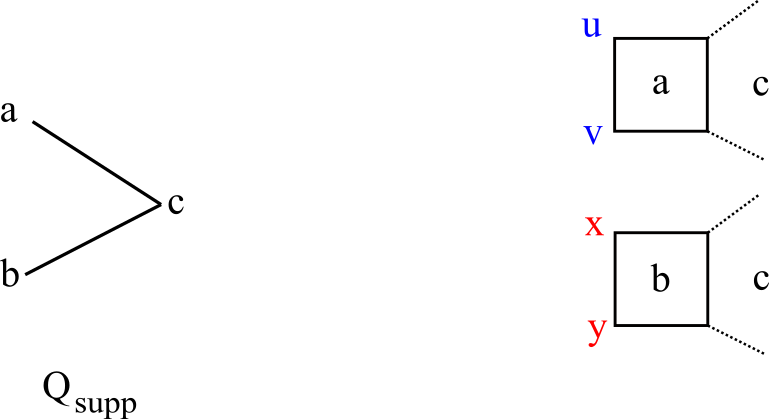}
    \caption{The \textcolor{blue}{blue} and \textcolor{red}{red} nodes on the tiles $a$ and $b$ in types I, II and III.}
    \label{fig:tailnodes}
\end{figure}

Now, we define the \textcolor{blue}{blue} nodes in type IV quivers. In the base graph, we create a $k$-star that corresponds to the vertex shared by all the tiles in the $k$-cycle. Define the vertex in the $k$-star to be the node $\textcolor{blue}{w}$. In the minimal matching, there is a unique vertex that is connected to $\textcolor{blue}{w}$ using the edges in $M_-$. Namely, the arrow $a \to b$ connecting the central $k$-cycle to the type $A_m$ spike must be a singular arrow - and we define $\textcolor{blue}{x}$ to be the vertex on tile $a$ adjacent to $\textcolor{blue}{w}$ but not shared by $b$. 

\begin{figure}[H]
    \centering
    \includegraphics[scale=.25]{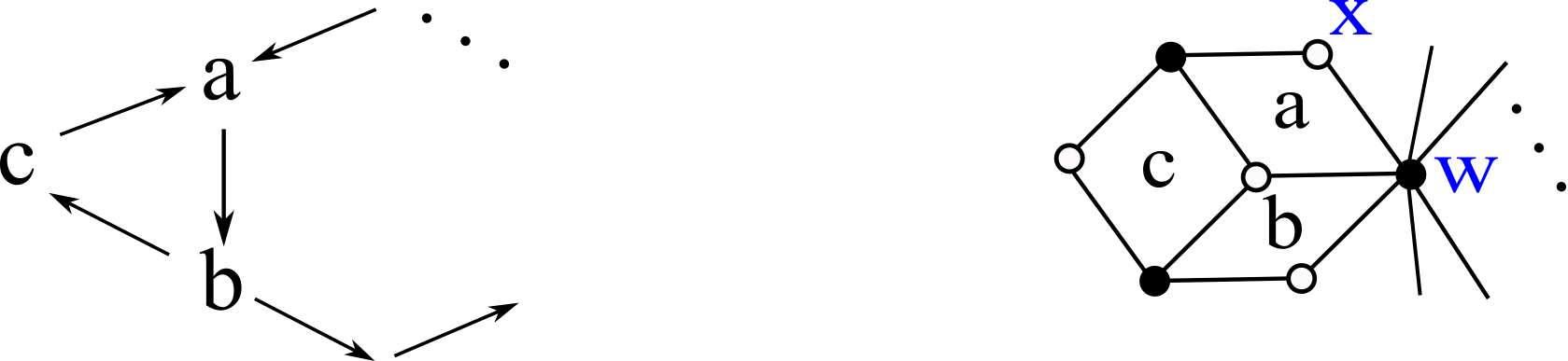}
    \caption{The \textcolor{blue}{blue} nodes on the tiles $a$ and $b$ in type IV.}
    \label{fig:typeIVnodesblue}
\end{figure}

Finally, to define the last pair of \textcolor{olive}{green} nodes, we focus on the type $A_m$ part of the quiver in any type. Let $i$ be the vertex in $Q_{\text{supp}}$ connected to the maximal number of vertices with $d_j = 0$ or $1$. If $i$ is not part of an oriented cycle, we define the \textcolor{olive}{green} nodes as in the acyclic case \cite{prequel}. If $i$ is part of an oriented cycle in $Q_{\text{supp}}$, then define the \textcolor{olive}{green} nodes $\textcolor{olive}{y,z}$ as in Figure \ref{fig:greennodes}.

\begin{figure}[H]
    \centering
    \includegraphics[scale=.25]{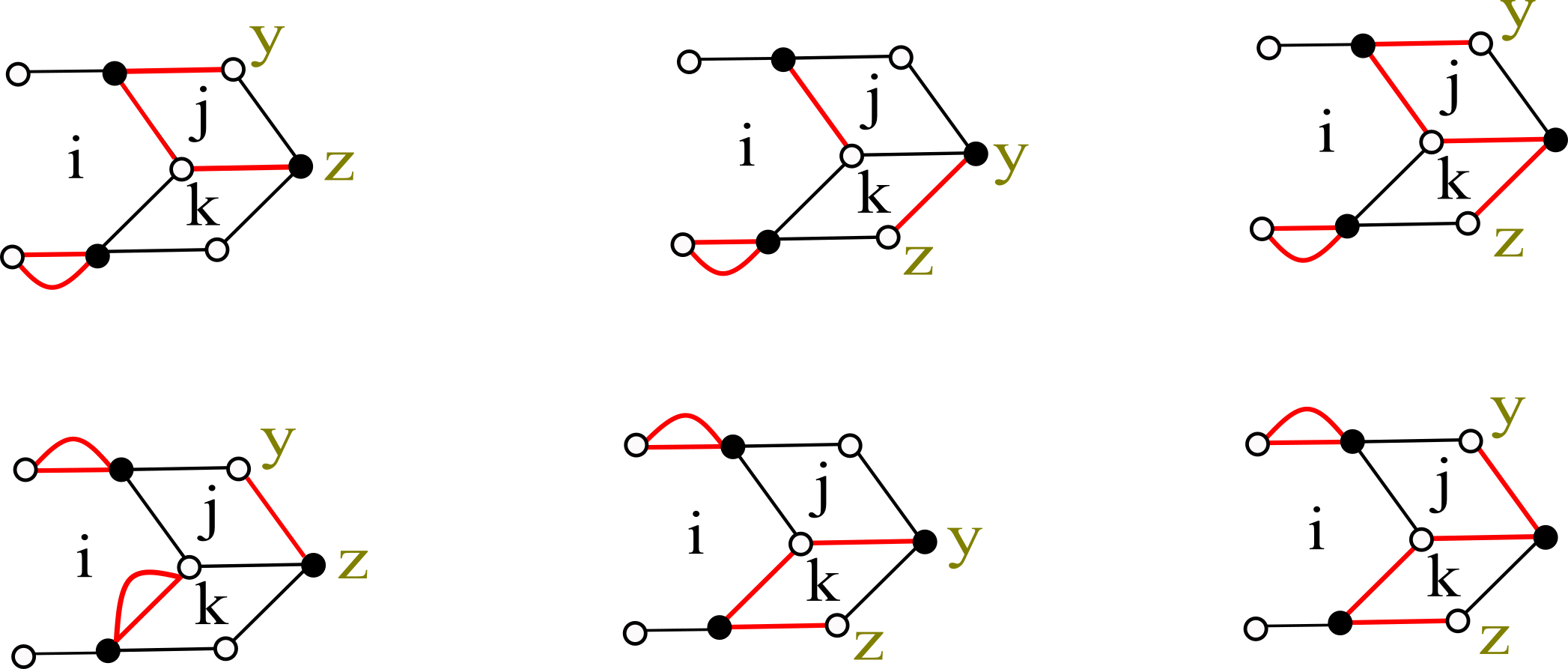}
    \caption{The \textcolor{olive}{green} nodes based on various tiles in or not in $Q_{\text{supp}}$. The leftmost column shows the case when $j \in Q_{\text{supp}}$ and $k \not\in Q_{\text{supp}}$; the center column shows the case when $j \not\in Q_{\text{supp}}$ and $k \in Q_{\text{supp}}$; and the rightmost column shows the case when $j,k \in Q_{\text{supp}}$.}
    \label{fig:greennodes}
\end{figure}

\begin{defn}\label{defn:nodesposet}
A mixed dimer configuration $M$ of $G$ is \textbf{node-monochromatic} if any path consisting of edges in $M$ between nodes connects nodes of the same color. If there exists a path consisting of edges in $M$ between nodes of different colors, we say $M$ is \textbf{node-polychromatic}.
We define $(P, \leq)$ to be the subposet of $(\bar{P}, \leq)$ consisting of mixed dimer configurations that satisfy the valence condition, are reachable via a sequence of allowable flips from $M_-$, and are node-monochromatic.
\end{defn}

\end{subsubsection}

\begin{prop}
\label{prop:minimalinP}
$M_-$ is in $P$, i.e. is a mixed dimer configuration on the base graph that satisfies the valence condition and is node-monochromatic.
\end{prop}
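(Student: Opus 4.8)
The plan is to verify the three defining properties of membership in $P$ for the configuration $M_- = M_-(Q,\underline{d})$ separately: (i) $M_-$ is a mixed dimer configuration satisfying the valence condition; (ii) $M_-$ is reachable from $M_-$ by a (trivial, length-zero) sequence of allowable flips, which is immediate; (iii) $M_-$ is node-monochromatic. Since (ii) is vacuous, the real content is (i) and (iii), and I expect (iii) to be the main obstacle.

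For the valence condition, first I would recall that $M_-$ is constructed as the disjoint union $M(G_1) \sqcup M(G_2) \sqcup Z$ (or $M(G_1) \cup Z$ when $\underline{d}$ has no $2$'s). The key observation is that $M(G_1)$ is the ``black-to-white clockwise'' boundary matching of the induced subgraph $G_1$ on tiles with $d_i \geq 1$, which by Lemma~\ref{lemma:d_vector_supp_connected} is connected. I would argue that such a boundary matching assigns valence exactly one to every boundary vertex of $G_1$ — this is the standard fact that tracing the boundary of a planar bipartite region and picking the edges oriented black-to-white clockwise gives a perfect matching of the boundary cycle — so every vertex of a tile with $d_i \geq 1$ gets valence at least one from $M(G_1)$. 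Then, when $\underline{d}$ has a $2$, I add $M(G_2)$, the analogous boundary matching on $G_2$ (connected by Lemma~\ref{lemma:d_vector_2}), which supplies a second edge at every vertex of every tile with $d_i = 2$; I need to check that $M(G_1)$ and $M(G_2)$ share no edges, which follows because the boundaries of $G_1$ and $G_2$ traced clockwise pick up opposite-oriented edges along the shared portion (the interface), so the two matchings are genuinely disjoint on the overlap and their union gives valence exactly two on $G_2$-tile vertices and valence one on the rest of $G_1$. Finally, the set $Z$ of singular-arrow edges must be checked to be compatible — each $e_{i,j} \in Z$ straddles tiles $i,j$ with $d_i, d_j \geq 1$ (a singular arrow of $M_\gamma$ lives between nonzero spaces), and I would verify that adding $e_{i,j}$ does not push any vertex past valence two; this uses the structural fact from the crossing-vector catalog (Appendix~\ref{section:d-vectors}) that singular arrows occur in controlled positions, e.g. at the attaching arrow of a spike, where the incident vertices are not already doubly covered.

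For node-monochromaticity, I would analyze the paths formed by edges of $M_-$ between nodes. A path in a mixed dimer configuration is a maximal connected component consisting of valence-two vertices (a doubled edge counts as a length-one cycle, and an alternating path between two valence-one endpoints is the other possibility). In $M_-$, valence-two vertices occur precisely on tiles with $d_i = 2$ (from $M(G_1) \sqcup M(G_2)$) together with the endpoints of $Z$-edges, so any node-to-node path must travel through these. The red/blue nodes sit on tiles $a,b$ adjacent to a common tile $c$ in types I–III (respectively on tiles $a,b$ at the central-cycle attachment in type IV via the designated singular arrow), and the green nodes sit on a tile in the $A_m$ part. The argument is: trace each $M_-$-path starting from a node and show it terminates at a node of the same color. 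For the red–blue pair in types I–III, since on the support of a $2$-containing $\underline{d}$ the type $D_n$ part is acyclic (a fork at $a,b,c$), the boundary matchings on $G_2$ keep the $a$-tile's distinguished vertices joined only to other $a$-tile/\,$c$-tile vertices and likewise for $b$, so no red node connects to a blue node. For the blue–green pair I would trace along the $A_m$ spike, using that the green nodes are placed (Figure~\ref{fig:greennodes}) exactly so that the minimal-matching path from a green node closes up within the green-labeled tiles or returns to the other green node. The hard part will be handling type IV cleanly: there the central $k$-cycle is fully supported whenever $\underline{d}$ has a $2$, the $k$-star vertex $\textcolor{blue}{w}$ has high valence, and $\textcolor{blue}{x}$ is defined via the singular attaching arrow $a\to b$; I need to check that in $M_-$ the unique $M_-$-path emanating from $\textcolor{blue}{w}$ runs along the $k$-star edges and the $Z$-edge $e_{a,b}$ to reach $\textcolor{blue}{x}$ and nothing else, so that it is a blue–blue path, and separately that the green nodes on the spike are joined only to each other. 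This requires a careful local picture of $M_-$ near the $k$-star, which is really where the work lies; everything else reduces to the planar boundary-matching combinatorics plus the structural constraints from Lemmas~\ref{lemma:d_vector_supp_connected} and \ref{lemma:d_vector_2} and the crossing-vector catalog.

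I would organize the write-up as: a short lemma that a clockwise black-to-white boundary matching of a connected planar bipartite tile-region is a matching of its boundary and is disjoint from the analogous matching of any sub-region; then apply it twice to get (i); then a case analysis over Vatne types I–IV, with types II and III folded into type I by the degeneration remark, establishing (iii). One subtlety worth flagging explicitly is that when $\underline{d}$ has no $2$'s there are no valence-two vertices except possibly from $Z$, so node-monochromaticity can only fail along a $Z$-path; I would note that $Z$-edges are isolated doubled edges (length-one cycles) or link into very short segments whose endpoints are never oppositely colored nodes, so monochromaticity holds trivially in that regime.
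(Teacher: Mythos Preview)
Your proposal is correct in outline, but you are working harder than the paper does on node-monochromaticity. The paper dispatches the valence condition in one line (``by construction,'' noting only that the $k$-star vertex in a fully supported cycle is covered by the $Z$-edge of the unique singular arrow), whereas you actually sketch why $M(G_1)$ and $M(G_2)$ are disjoint and cover the right vertices---so your treatment there is more careful than the original. For (iii), however, the paper avoids the path-tracing case analysis you propose. Instead it observes that, precisely by how the nodes were placed, the two green nodes are already joined to one another by an $M_-$-path when the $A_m$-cycle is supported, and the two blue nodes are joined to one another by an $M_-$-path in type IV (indeed $\textcolor{blue}{x}$ was \emph{defined} as the vertex adjacent to $\textcolor{blue}{w}$ in $M_-$). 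Given that, any additional $M_-$-path from one of these nodes to a node of a different color would force some vertex to have valence at least three, contradicting that $M_-$ is a mixed dimer configuration. This degree-obstruction argument replaces your entire Vatne-type case analysis in two sentences; your approach would also succeed but is considerably longer, and the ``hard part'' you flag in type IV is exactly what the paper sidesteps by building the blue--blue connection into the definition of $\textcolor{blue}{x}$.
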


\begin{proof}
First note that $M_-$ trivially satisfies the condition that it is reachable by a sequence of allowable flips from $M_-$ by taking the empty sequence. Note that $M_-$ also satisfies the valence condition by construction. By definition of $M_-$, the sets $M_1, M_2$ will satisfy the valence condition and for any $k$-cycle fully supported in $\underline{d}$, the vertex corresponding to the $k$-star is matched by the edges in $Z$ corresponding to the unique singular arrow in that $k$-cycle. So, it suffices to show that $M_-$ is node-monochromatic by showing that there are no paths between nodes of different colors. If the associated quiver is acyclic, $M_-$ is node-monochromatic, see Proposition 3.4.1 of \cite{prequel}. So, we need to show $M_-$ is node-monochromatic if our induced subquiver with respect to $\underline{d}$ contains an oriented cycle. \allowbreak

\vspace{1em}

If the oriented cycle appears in the type $A_m$ part of our induced subquiver, then by definition of the green nodes in Figure \ref{fig:greennodes}, these green nodes must be connected in $M_-$. In particular, these sets of nodes cannot be connected to red or blue nodes as any such path would imply a vertex of degree 3 in $M_-$. If the oriented cycle appears in the type $D_n$ part of the subquiver, then this quiver must be type IV. In this case, the blue nodes are connected in $M_-$ by definition, which again implies that they cannot connect to the green nodes as any such path would imply a vertex of degree 3 in $M_-$. Therefore, $M_-$ must be node-monochromatic.
\end{proof}
\end{subsection}

In the acyclic case, refer to Example 3.4 and Figure 1 in \cite{prequel} to see an example of this poset $P$. We provide another example when $Q$ contains an oriented cycle before proceeding stating our main result in the following section. 

\begin{ex}\label{example:typeIVcyclic}
In Vatne type IV where the central cycle is a 3-cycle, consider the $D_6$ quiver shown in Figure \ref{fig:basegraphexample} with $\underline{d} = (1,1,2,1,1,1)$. The base graph and minimal matching with the nodes is given by:

\begin{center}
    \includegraphics[scale=.5]{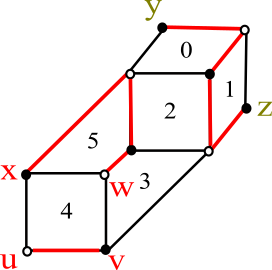}
\end{center}

\noindent and the poset $P$ of node monochromatic mixed dimer configurations is given in Figure \ref{fig:bigposetcyclic}.

\begin{figure}[H]
    \centering
    \includegraphics[scale=.335]{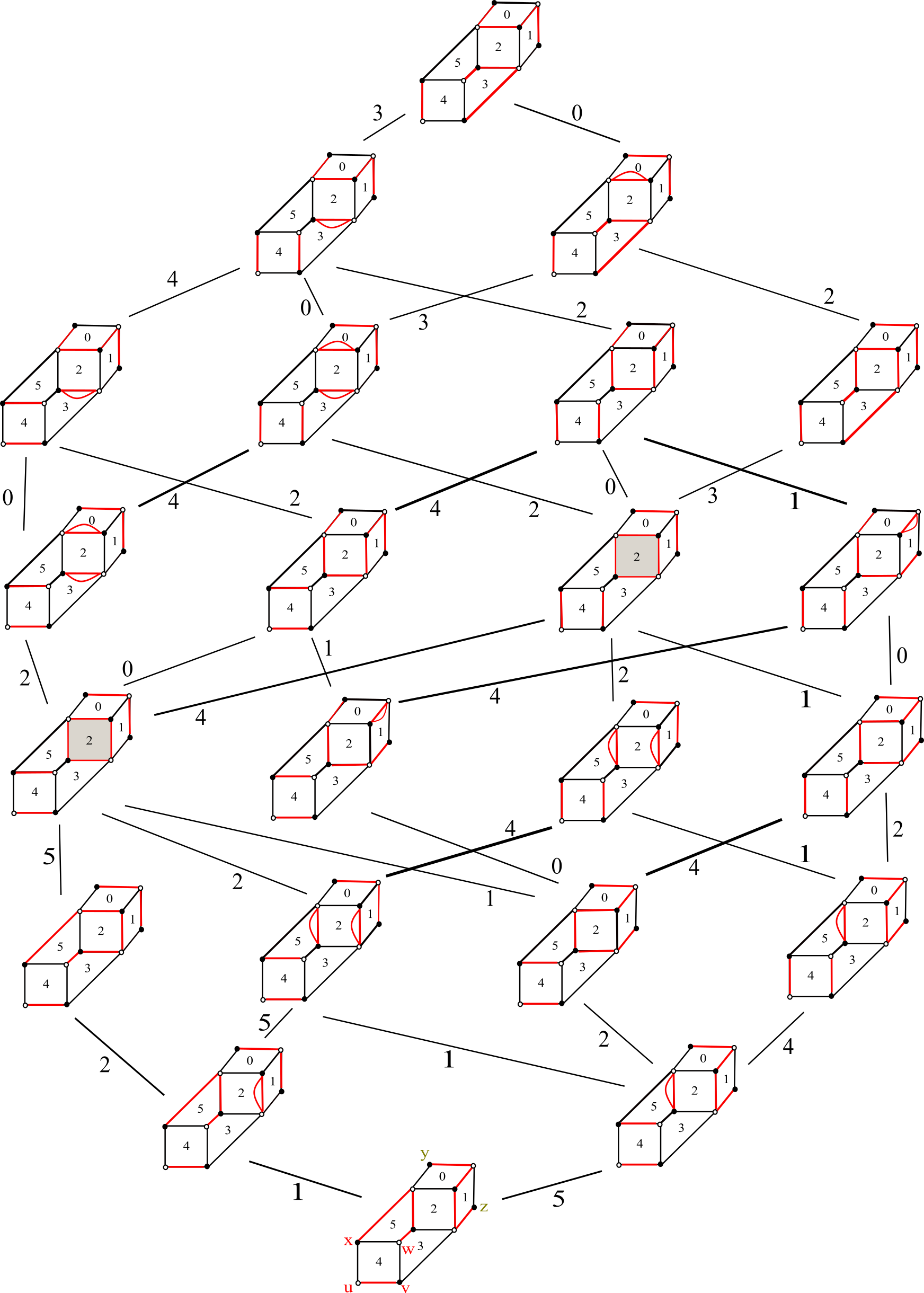}
    \caption{Poset of mixed dimer configurations in Example \ref{example:typeIVcyclic}. The tiles enclosed by cycles are shaded grey for emphasis.}
    \label{fig:bigposetcyclic}
\end{figure}

\end{ex}

Now, with these definitions and this running example, we have the ingredients to state our main result. Namely, we will show that the elements of this poset $P$ give the non-zero monomials in the $F$-polynomial associated to $\underline{d}$.
\end{section}

\begin{section}{Main Theorem} \label{section:maintheorem}
In this section, we connect the representation theory and combinatorics presented in the previous sections together in our main result. In particular, we give the generating function for cluster variables in terms of mixed dimer configurations indexed by certain dimension vectors of modules of the associated Jacobian algebra. 

\begin{thm}
\label{thm:main}
Given any quiver $Q$ of type $D_n$ and crossing vector $\underline{d}$, we let $F_{\underline{d}}$ denote the $F$-polynomial corresponding to the cluster variable with crossing vector $\underline{d}$.  This expression is based on the appropriate cluster algebra of type $D_n$ and assuming an initial seed defined by the choice of quiver $Q$ and the standard initial cluster of $\{x_1,x_2,\dots, x_n\}$. \allowbreak \vspace{1em}

Furthermore, let $M_- = M_-(Q,\underline{d})$ be the minimal matching, as defined in Section \ref{subsection: minimalmatching} and let $P$ be the poset of mixed dimer configurations that satisfy the valence condition, are reachable via a sequence of allowable flips from $M_-$, and satisfy the node monochromatic condition as defined in Definition \ref{defn:nodesposet}. Then the expansion of $F$-polynomial $F_{\underline{d}}$ can be expressed as a weighted multi-variate rank-generating function on the poset $(P, \leq)$ determined by $M_-$:

$$F_{\underline{d}} = \sum_{D \in P} 2^c u_{0}^{t_0} u_1^{t_1} \cdots u_{n-1}^{t_{n-1}},$$
where the sum is taken over mixed dimer configurations $D$ obtained by flipping tile $i$ $t_i$-times (keeping track of multiplicities) and $c$ is the number of cycles in $D$.
\end{thm}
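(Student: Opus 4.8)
The plan is to reduce the statement to the representation-theoretic description of the $F$-polynomial (the theorem of \cite{dwz} quoted above) by constructing an explicit weight-preserving bijection between the poset $P$ of node-monochromatic mixed dimer configurations reachable from $M_-$ and the set of pairs $(\underline{e}, N)$ where $\underline{e} \le \underline{d}$ is a dimension vector of a submodule $N \subseteq M_\gamma$, together with a count of the fibers. Concretely, I would show that to each $D \in P$ obtained from $M_-$ by flipping tile $i$ exactly $t_i$ times one can associate the submodule dimension vector $\underline{e}(D) = (t_0, t_1, \dots, t_{n-1})$, and that $D \mapsto \underline{e}(D)$ realizes the monomial $\underline{u}^{\underline{e}(D)} = u_0^{t_0}\cdots u_{n-1}^{t_{n-1}}$ appearing in $F_{\underline{d}}$. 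The coefficient statement then amounts to showing that the number of $D \in P$ with $\underline{e}(D) = \underline{e}$, weighted by $2^c$ where $c$ is the number of closed loops in $D$, equals the Euler characteristic $\chi(\mathrm{Gr}_{\underline{e}}(M_\gamma))$ of the corresponding quiver Grassmannian.

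The key steps, in order, would be: (1) establish that $P$ is ranked, with rank of $D$ equal to $\sum_i t_i$, and that the flip at tile $i$ is an involution-free local move so that the multiset $(t_0, \dots, t_{n-1})$ is well-defined independent of the flip sequence — this follows the argument in Section 3.4 of \cite{prequel}, now accounting for the edges in $Z$ coming from singular arrows and for the $k$-star vertices in the type IV case; (2) prove the ``submodule correspondence'': a mixed dimer configuration $D$ reachable from $M_-$ and satisfying the valence and node-monochromatic conditions corresponds to a well-defined subrepresentation of $M_\gamma$ — here the valence condition with respect to $\underline{d}$ encodes the requirement $\underline{e} \le \underline{d}$, the flip moves correspond to enlarging the subspace at a vertex by one dimension in a way compatible with the quiver maps, and the node-monochromatic condition is exactly what rules out the dimension vectors that do not support an actual submodule (the polychromatic configurations correspond to ``forbidden'' subspace choices that would violate the relations in the Jacobian ideal, in particular around the puncture cycle $W^P$ and the internal-triangle relations); (3) handle the coefficient $2^c$: a configuration with $c$ closed loops corresponds to a positive-dimensional family of submodules with the given dimension vector, and the Euler characteristic of this family (a product of $\mathbb{P}^1$'s, or more precisely a space whose Euler characteristic is $2^c$) matches the dimer count — this is the double-dimer phenomenon, where each closed loop can be ``resolved'' in two ways, each giving a lattice point in the quiver Grassmannian; (4) verify compatibility case-by-case against Vatne's classification (types I--IV) using the catalog of crossing vectors in Appendix \ref{section:d-vectors}, checking in particular that the base graph construction (Definitions \ref{defn:uncontractedbasegraph}, \ref{defn: basegraph}), the placement of red/blue/green nodes, and the definition of $M_-$ (Definition \ref{defn: minimalmatching}) are consistent with the maps $\varphi_\alpha$ defined via segments of $\gamma$ in Example \ref{ex:puncturerepresentation}.

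I expect the main obstacle to be step (2) in the presence of oriented cycles, specifically proving that the node-monochromatic condition is \emph{exactly} the right constraint — neither too strong (so that every submodule dimension vector is hit) nor too weak (so that every configuration in $P$ genuinely arises from a submodule). In the acyclic case \cite{prequel} this was controlled by Tran's work \cite{tran}; here one must argue directly with the Jacobian relations $\partial_\alpha W(T) = \partial_\alpha(\sum_\Delta W^\Delta + W^P)$, and the subtlety is that singular arrows (which are zero maps in $M_\gamma$ but correspond to genuine edges in $Z \subseteq M_-$) interact with the valence-$2$ vertices that generate the closed loops. A secondary obstacle is the $2^c$ count in step (3): one must show the relevant piece of the quiver Grassmannian is (stratified by) a variety with Euler characteristic exactly $2^c$, i.e. that the $c$ loops are ``independent'' — this should follow from the tree structure of $Q^{\mathrm{supp}_2(\underline{d})}$ guaranteed by Lemma \ref{lemma:d_vector_2}, which prevents the loops from linking in a way that would change the count, but making this precise will require a careful local analysis of how a closed loop in $D$ sits relative to the tiles with $d_i = 2$.
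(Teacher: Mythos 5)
Your overall architecture matches the paper's: you pass to the Derksen--Weyman--Zelevinsky formula $F_{\underline d} = \sum_{\underline e} \chi(\mathrm{Gr}_{\underline e}(M_\gamma))\,\underline u^{\underline e}$, build a bijection between $D\in P$ and dimension vectors $\underline e$ via the flip multiplicities $(t_0,\dots,t_{n-1})$, and then match $2^c$ against $\chi(\mathrm{Gr}_{\underline e}(M_\gamma))$. The paper realizes this plan via three ingredients in exactly that spirit: Theorem~\ref{thm:EtoD} and Theorem~\ref{thm:Dtoe} construct the two directions of the bijection (the weighted-flip algorithm and the cycle-deletion algorithm, respectively), Theorem~\ref{thm:4.2Tran} computes $\chi(\mathrm{Gr}_{\underline e})$, and the last lemma shows that the number of cycles in $D$ equals the number of connected components $S$ of $C=\{i:(d_i,e_i)=(2,1)\}$ with $\nu(S)=0$.

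Where you genuinely diverge from the paper is your step~(2). You propose to ``argue directly with the Jacobian relations $\partial_\alpha W(T)$'' to characterize which $\underline e$ index nonempty quiver Grassmannians. The paper does something different and arguably more economical: it keeps Tran's framework intact and isolates the combinatorial conditions (acceptability of non-singular arrows, and $\nu(C)\le 1$) into the notion of a ``submodule-indexing'' vector (Definition~\ref{submod-indexing}), modified only by exempting singular arrows. Theorem~\ref{thm:4.2Tran} then reduces to Tran's acyclic Theorem~7.4 whenever $\underline d\in\{0,1\}^n$ (since the support is acyclic there), and only performs new local analysis when $d_i\ge 2$. This reuse of the acyclic machinery, plus the singular-arrow bookkeeping in Lemma~\ref{lemma:mijnijsingular}, is what keeps the proof manageable; arguing from the potential $W^\Delta + W^P$ directly, as you propose, is a heavier lift and you would likely end up re-deriving Tran's conditions from scratch anyway. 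Your $2^c$ heuristic (each loop a $\mathbb P^1$ of choices, two resolutions per loop) is the right intuition and is what the paper carries out, and your appeal to the tree structure of $Q^{\supp_2(\underline d)}$ from Lemma~\ref{lemma:d_vector_2} for independence of the loops is also consistent with the paper's argument.

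One small imprecision worth flagging: you initially describe the bijection as going to ``pairs $(\underline e, N)$'' with $N$ a specific submodule. The bijection is between elements of $P$ and dimension vectors $\underline e$, not to individual submodules; each $D\in P$ picks out a \emph{single} $\underline e$, and $2^c$ equals the full Euler characteristic of $\mathrm{Gr}_{\underline e}(M_\gamma)$. You correct this later when describing the positive-dimensional family, so it reads more as loose phrasing than a real error, but it is worth being careful about, since a naive ``one $D$ per submodule'' picture would double-count exactly the configurations with cycles.
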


\begin{ex}\label{ex:maintheoremexample}
Consider the quiver and $\underline{d}$-vector from Example \ref{example:typeIVcyclic}. Each element of the poset $P$ in Figure \ref{fig:bigposetcyclic} correspond to a monomial in the $F$-polynomial:
\begin{align*}
    F_{\underline{d}} &= 1+u_1+u_1u_5+u_4u_5+u_1u_2+u_2u_5+u_2u_4u_5+u_1u_4u_5+2u_1u_2u_5+u_0u_2u_5\\
    &+2u_1u_2u_4u_5+u_0u_2u_4u_5+u_1u_2^2u_5+u_0u_1u_2u_5+u_1u_2u_3u_4u_5+u_1u_2^2u_4u_5\\
    &+u_0u_1u_2u_4u_5+u_0u_1u_2^2u_5+u_1u_2^2u_3u_4u_5+u_0u_1u_2^2u_4u_5+u_0u_1u_2^2u_3u_4u_5.
\end{align*}

To highlight one of the monomials in the poset, notice the term $2u_1u_2u_5$ corresponds to traversing up the poset $P$ on the left by flipping tiles 1,2 and then 5. There is one cycle in the mixed dimer configuration enclosing the face 2 which gives the coefficient $2^1$.
\end{ex}

In order to prove Theorem \ref{thm:main}, we rely on representation theory. In the acyclic case \cite{prequel}, we proved this result by creating a bijection between mixed dimer configurations and vectors that parameterize subrepresentations of a fixed indecomposable quiver representation of dimension vector $\underline{d}$. In particular, we utilized a categorization of these vectors created by Tran \cite{tran}. The acyclic case is particularly nice because the indecomposable quiver representations are indexed by positive roots of the $D_n$ root system. As mentioned previously, when $Q$ contains an oriented cycle, positive roots no longer are in bijection with the cluster variables. We instead rely on the surface model with the crossing vectors cataloged in Appendix \ref{section:d-vectors}. \allowbreak

\vspace{1em}

To an arc $\gamma$ in a triangulated surface, one can associate an indecomposable Jacobian algebra module $M_\gamma$ whose dimension vector $\underline{d}_\gamma$ is given by enumerating the crossings of $\gamma$ with the arcs of the triangulation. With this, we mimic the work of Tran to parameterize $\underline{e}$ vectors that index sub-modules of $M_\gamma$ we call submodule-indexing vectors. We then prove Theorem \ref{thm:main} by creating a bijection between these submodule-indexing $\underline{e}$ vectors and mixed dimer configurations. To state this theorem, we define a few properties on arrows given by \cite{tran} in Definition 4.1. Define a partial order $\geq$ on $\zz^n$ via 

$$\underline{a} \geq \underline{a'} \text{ if } \underline{a} - \underline{a'} \in \zz_{\geq 0}^n.$$

\begin{defn}\label{defn:acceptablecritical}
Let $\underline{d} \in \{0,1,2\}^n$ and $\underline{e}$ such that $0 \leq e_i \leq d_i$ for each $1 \leq i \leq n$. An arrow $j \to k \in Q_1$ is \textbf{acceptable with respect to $(\underline{d}, \underline{e})$} if $e_j-e_k \leq [d_j-d_k]_+ := \max(d_j-d_k,0)$. An arrow is called \textbf{critical with respect to $(\underline{d}, \underline{e})$} if either 
$$(d_j,e_j) = (2,1) ~~\text{ and }~~(d_k,e_k) = (1,0)$$
or
$$(d_k,e_k) = (2,1) ~~\text{ and }~~(d_j,e_j) = (1,1)$$
Let $C = \{i \in Q_0~:~(d_i,e_i)=(2,1)\}$ and let $\nu(C)$ be the number of critical arrows that have a vertex in $C$.
\end{defn}

With these definitions, we are going to define a set of vectors $\underline{e}$ we call \textbf{submodule-indexing} that satisfy certain conditions. In \cite{tran}, analogous conditions gave criterion for indexing dimension vectors of subrepresentations of a given indecomposable acyclic quiver representation. Note that in the new cases of non-acyclic quiver, singular arrows can play a special role.

\begin{defn} \label{submod-indexing}
Let $\underline{d} \in \{0,1,2\}^n$. We say that a vector $\underline{e} \in \{0,1,2\}^n$ is \textbf{submodule-indexing with respect to $\underline{d}$} if \begin{enumerate}
    \item $0 \leq \underline{e} \leq \underline{d}$;
    \item Any arrow $j \to k \in Q_1$ that is not singular, is acceptable; and 
    \item $\nu(C) \leq 1$.
\end{enumerate} 
\end{defn}

\begin{rmk}\label{rmk:evectorlanguage}
We refer to condition 2 from Definition \ref{defn:acceptablecritical} as the acceptability condition and condition 3 as the criticality condition.
\end{rmk}

Now we are ready to state a bijection between mixed dimer configurations and these submodule-indexing vectors.

\begin{thm}\label{thm:bijection}
Let $T$ be an ideal triangulation of a once-punctured $n$-gon.  Let $(Q_T,W_T)$ be the associated quiver and the $J_T = \Bbbk Q_T/I$ be its Jacobian algebra. Let $M$ be an indecomposable $J_T$-module associated to an arc $\gamma$ with dimension vector $\underline{d}$. Then there exists a bijection between 
$$\{\text{mixed dimer configurations }D \text { in } P\} \stacklongleftrightarrow{\sim} 
\{\underline{e} \text{ submodule-indexing with respect to } \underline{d}\}$$
where $P$ is the poset defined in Definition \ref{defn:nodesposet} and is order-preserving where the set on the right has a natural partial ordering defined right above Definition \ref{defn:acceptablecritical}.
\end{thm}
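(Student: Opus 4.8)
The plan is to construct the bijection tile-by-tile, translating the local combinatorics of a mixed dimer configuration on each tile into the numerical data $e_i$, and then to check that the global constraints on the two sides match up. First I would define the forward map $\Phi$ from mixed dimer configurations to vectors: given $D \in P$, set $e_i$ to be the number of times tile $i$ has been flipped relative to $M_-$ (counting multiplicity, exactly the exponent $t_i$ appearing in Theorem~\ref{thm:main}). The first thing to verify is that $0 \le e_i \le d_i$: a tile with $d_i = 0$ is never flippable (flips require $d_i > 0$ by Definition~\ref{defn: posetrelation}), and a tile with $d_i = 1$ can be flipped at most once while a tile with $d_i = 2$ at most twice, since flipping exchanges the black-to-white edges on that tile and the valence condition caps how many edges can sit on a vertex. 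This gives condition (1) of Definition~\ref{submod-indexing}. I would then show that for each non-singular arrow $j \to k$, comparing the local edge configurations on tiles $j$ and $k$ forces $e_j - e_k \le [d_j - d_k]_+$: the key point is that a flip of tile $j$ cannot be performed independently of tile $k$ once $e_j$ would exceed $e_k + [d_j - d_k]_+$ because the shared edge $e_{j,k}$ would be forced into a configuration incompatible with the valence condition on the shared vertices. This is essentially the same local analysis as in \cite{prequel} for the acyclic case, so I would reduce to that argument on the subgraph where $\underline{d}$ is acyclic and handle only the genuinely new local pictures involving cycles.

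Next I would handle the criticality condition $\nu(C) \le 1$, which is where the node-monochromatic hypothesis enters. The set $C = \{i : (d_i,e_i) = (2,1)\}$ corresponds to tiles $i$ with $d_i = 2$ that have been flipped exactly once; on the base graph such a half-flipped tile forces a vertex of valence $2$ that sits on a path, and the node-monochromatic condition on $P$ says precisely that such a path cannot connect a red node to a blue node (or blue to green, etc.). I would argue that having two distinct critical arrows incident to $C$ would produce exactly such a forbidden path between differently-colored nodes — this is the heart of the matter and I expect it to be the main obstacle, since it requires a careful case analysis across Vatne types I–IV using the node placements in Figures~\ref{fig:tailnodes}, \ref{fig:typeIVnodesblue}, and \ref{fig:greennodes}, together with the structural Lemmas~\ref{lemma:d_vector_supp_connected} and \ref{lemma:d_vector_2} to control where the $2$'s can sit. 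Conversely, I would show that a node-polychromatic configuration necessarily has $\nu(C) \ge 2$ (or some other violation), so the node-monochromatic configurations are exactly those mapping to submodule-indexing vectors.

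For injectivity of $\Phi$, I would observe that a mixed dimer configuration reachable from $M_-$ by flips is determined by which tiles were flipped and how many times, since flips on distinct tiles that are both available commute (as in \cite{prequel}, the poset $P$ is essentially a distributive-lattice-like structure built from these commuting local moves). For surjectivity I would build the inverse map: given a submodule-indexing $\underline{e}$, flip tile $i$ exactly $e_i$ times in any order consistent with the partial order; conditions (2) and (3) are exactly what guarantee that at each stage a legal flip is available and that the result stays node-monochromatic, so the process terminates at a well-defined element of $P$. Finally, order-preservation is immediate from the construction: $D \le D'$ in $P$ means $D'$ is obtained from $D$ by further flips, which increases some $t_i = e_i$ while fixing the rest, i.e. $\underline{e}(D') \ge \underline{e}(D)$ in the partial order defined before Definition~\ref{defn:acceptablecritical}. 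I would close by remarking that combined with the representation-theoretic $F$-polynomial formula of \cite{dwz} and the (to-be-established, or cited) fact that submodule-indexing vectors enumerate the dimension vectors of submodules of $M_\gamma$ with the correct Euler characteristics $2^c$, this bijection yields Theorem~\ref{thm:main}.
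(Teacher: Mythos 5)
Your proposal has the right overall shape — two maps, verifying the three conditions of Definition~\ref{submod-indexing}, then checking they are mutually inverse and order-preserving — but the way you define the forward map $\Phi$ contains a gap that the paper's argument is designed precisely to avoid. You set $e_i$ to be ``the number of times tile $i$ has been flipped relative to $M_-$.'' As stated, this is a property of a \emph{flip sequence}, not of the mixed dimer configuration $D$ itself: a priori two different allowable flip sequences from $M_-$ to the same $D$ could flip tiles different numbers of times, so $\Phi$ is not obviously well-defined. Your proposed fix — that distinct available flips commute, making the structure ``distributive-lattice-like'' — is a genuine claim that needs proof, and it is more delicate here than in the acyclic case because of the doubled edges, cycles, and singular arrows. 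The paper avoids this circularity entirely by defining the $D \mapsto \underline{e}$ direction intrinsically (Theorem~\ref{thm:Dtoe}): superimpose $D$ with $M_-$, cancel common edges, and iteratively peel off cycles of maximal length, incrementing $v_i$ for each enclosed tile $i$. This produces a canonical vector from $D$ alone, and then the well-definedness of the ``flip count'' falls out as a consequence of proving the two constructions are inverse. If you want to run your proof, you must first establish the invariance of the flip count — which is essentially equivalent to inventing the paper's cycle-deletion map.

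A second, smaller gap: you never address singular arrows. The acceptability condition in Definition~\ref{submod-indexing} explicitly excludes singular arrows, and the paper needs Lemma~\ref{lemma:mijnijsingular} (a shifted-by-one version of the edge-counting identity) precisely because the minimal matching $M_-$ carries an extra distinguished edge across each singular arrow. Reducing ``to the acyclic case'' on the support of $\underline{d}$ will not capture this, since singular arrows are a cyclic phenomenon with no acyclic analogue. Your treatment of the criticality condition is pointed in the right direction (forbidden bichromatic paths correspond to $\nu(C)\ge 2$), but the paper carries this out as a concrete case analysis on orientations (i)--(iv) of the local quiver around a $(2,1)$-vertex, sorted by Vatne type, and one of the key reductions uses that the connected tree $Q^{\supp_2(\underline{d})}$ from Lemma~\ref{lemma:d_vector_2} forces certain patterns to be impossible; this would need to be done explicitly, as it is the technical heart of the argument rather than something one can wave at. Finally, the closing step — that the two maps compose to the identity — is not automatic from your setup and requires its own induction on $|\underline{e}|$, as in the paper.
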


We first work to prove Theorem \ref{thm:bijection} by demonstrating a map in both directions. The map taking mixed dimer configurations to submodule-indexing $\underline{e}$-vectors is given by taking the superimposition of the mixed dimer configuration with $M_-$ and deleting cycles created by this multigraph. The tiles that these cycles are formed on correspond to the entries in $\underline{e}$. The map taking a submodule-indexing vector $\underline{e}$ to a mixed dimer configuration is given by taking a sequence of ``weighted flips" from the minimal matching on tiles that are in $\supp(\underline{e})$. We first describe the ladder map by weighting the edges of the base graph $G$ via

$$w(e) := \#\{ \text{ edges on }e \text{ in } M_-\}$$
for all $e \in E(G)$.
     
\begin{defn}
\label{defn:weightedflip}
The weights of the edges are transformed via the following prescription after flipping a tile: 
\begin{center}
    \includegraphics[scale=.3]{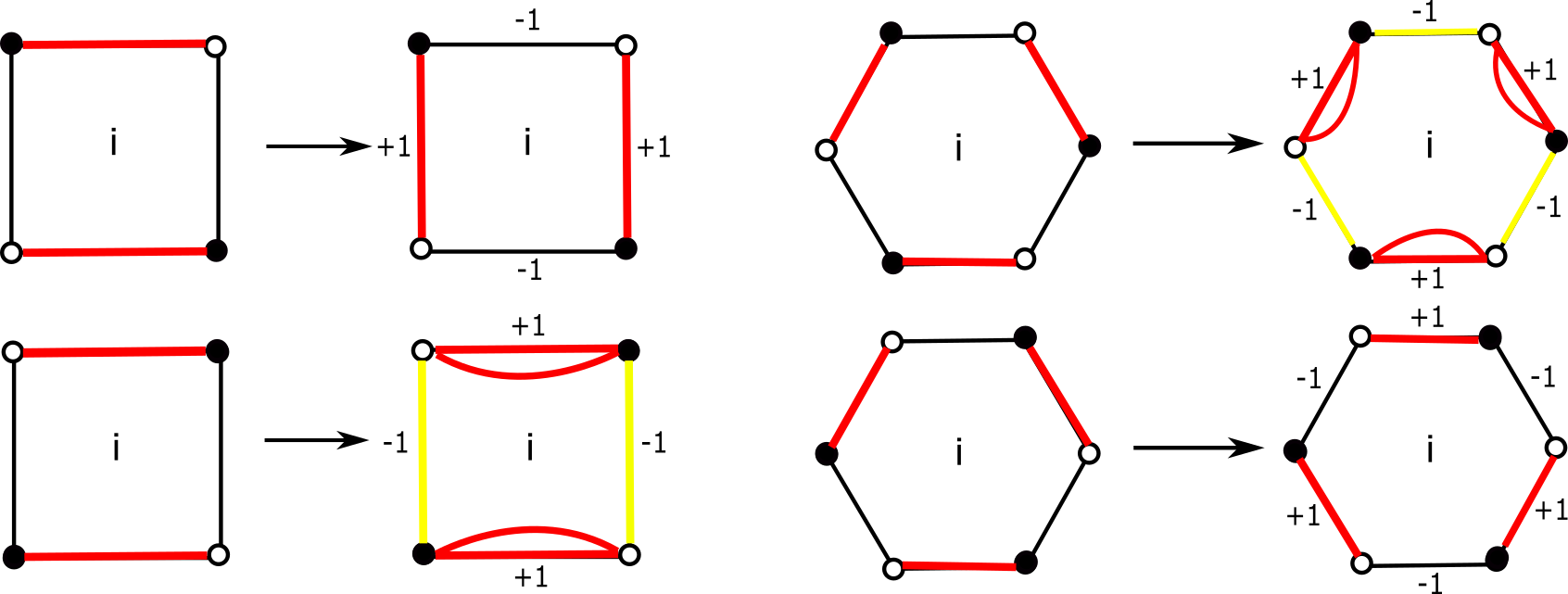}
\end{center}
This is known as a \textbf{weighted flip}.
\end{defn}
    
\begin{rmk}
Implicit in this definition is the fact that our weights will stay non-negative when we perform the flips given in Definition \ref{defn: posetrelation}. However, in the context of this direction of the bijection, we will allow for weighted flips at any tile; meaning that we now allow the weight of an edge to be negative. Following the conventions placed in \cite{prequel}, we emphasize the deficits which arise when we flip edges that are not in a given mixed dimer configuration, we distinguish edges of negative weight in yellow and call them ``antiedges." Allowing weights to be negative in intermediate steps of a sequence of flips simplifies the proof as we do not need to prove a sequence of allowable flips exists to obtain a mixed dimer configuration from a submodule-indexing vector $\underline{e}$. Namely, we just show that after flipping in any order, the mixed dimer configuration we obtain has all non-negative weighting and belongs to the poset $P$.
\end{rmk}
     
\begin{thm}
\label{thm:EtoD}
Let $\gamma$ be an arc superimposed on a triangulated once-punctured $n$-gon and let $Q$ be the quiver associated to this triangulation. Let $\underline{d}$ be the crossing vector of the arc $\gamma$. Let $G$ be the base graph constructed using the data of $Q$ and $\underline{d}$ as described in Definition \ref{defn:uncontractedbasegraph} and \ref{defn: basegraph}. Suppose that $\underline{e}$ is a submodule-indexing vector $\underline{e}$. Then there exists a unique way to produce a mixed dimer configuration $D$ in poset $P$ via the following procedure: 

\begin{enumerate}
\item Weight the edges of the base graph by $w(e)$ for $e \in M_-$. Take the positive entry $e_{i_1} > 0$ in $\underline{e}$ of minimal index, and flip tile $i_1$ $e_{i_1}$ number of times. Transform its edge weights as prescribed in Definition \ref{defn:weightedflip}. Let $D_1$ be the mixed dimer configuration obtained.
\item From $D_1$, take the next positive entry $e_{i_2} > 0$ in $\underline{e}$ of minimal index, and flip tile $i_2$ $e_{i_2}$ number of times. Again, transform its edge weights as prescribed in Definition \ref{defn:weightedflip} to arrive at the mixed dimer configuration $D_2$.
\item Iterate this process until we have exhausted all positive entries in $\underline{e}$. The resulting mixed dimer configuration $D$ will only have non-negative weights, i.e. no yellow edges will remain. Moreover, it will be an element of $P$. 
\end{enumerate}
\end{thm}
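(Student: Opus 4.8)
\textbf{Proof proposal for Theorem \ref{thm:EtoD}.}

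The plan is to show three things in sequence: (i) the procedure is well-defined and terminates, producing \emph{some} multiset of edges $D$; (ii) $D$ has only non-negative weights, so no antiedges survive, and $D$ satisfies the valence condition dictated by $\underline{d}$; and (iii) $D$ is node-monochromatic, hence lies in $P$, and the output does not depend on any choices made along the way. For (i), the key observation is that flipping tile $i$ is a local operation on the edges incident to the boundary of tile $i$, and that performing $e_i \le d_i$ flips is always legal once we permit antiedges in intermediate stages; since $\underline{e}$ has finitely many positive entries and we process them in increasing index order, the process clearly terminates after $\sum_i e_i$ weighted flips.

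For (ii), I would argue locally, vertex by vertex. The weight function $w$ records multiplicities of edges in $M_-$, so at the start every vertex in a tile $i$ with $d_i \ge 1$ has total incident weight at least $1$ (at least $2$ if $d_i = 2$), by Definition \ref{defn: minimalmatching} together with Lemmas \ref{lemma:d_vector_supp_connected} and \ref{lemma:d_vector_2} which guarantee $G_1$ and $G_2$ are connected. A single weighted flip at tile $i$ preserves, at every vertex of tile $i$, the total incident weight (it exchanges black-to-white edges for white-to-black edges around the tile). Hence the valence condition is an invariant of the whole procedure; the only thing that can go wrong is an edge acquiring negative weight. Here I would use the acceptability condition: at a non-singular arrow $j \to k$, the inequality $e_j - e_k \le [d_j - d_k]_+$ controls the imbalance between the number of flips on the two tiles sharing the straddling edge $e_{j,k}$, and a careful bookkeeping (splitting into the cases $(d_j,d_k) \in \{(1,1),(2,1),(1,2),(2,2)\}$, mirroring Definition \ref{defn:acceptablecritical}) shows the weight of $e_{j,k}$ never drops below $0$ after all relevant flips are completed. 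The singular arrows are exactly those whose straddling edges lie in $Z$; for these, the extra unit of weight contributed by $Z$ in the definition of $M_-$ absorbs the one additional flip that the criticality condition $\nu(C) \le 1$ would otherwise permit, so again no negative weight remains. Since all antiedges are cancelled by the end, $D$ is a genuine mixed dimer configuration.

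For (iii), I would show $D$ is reachable from $M_-$ by a sequence of \emph{allowable} flips (not merely weighted flips permitting antiedges): once we know $D$ has non-negative weights, the sequence of tile-flips used to build it can be reordered or re-interpreted as a monotone chain in $\bar P$, because each individual flip in the final realization only ever converts edges present in the current configuration. Node-monochromaticity then follows because a path between two nodes in $D$ would force some vertex to have incident weight $3$, contradicting the valence condition $d_i \le 2$ — this is the same degree-$3$ obstruction used in the proof of Proposition \ref{prop:minimalinP}, applied now to the general configuration $D$ rather than just $M_-$; the criticality condition $\nu(C)\le 1$ is precisely what prevents a critical arrow from creating a forbidden polychromatic path near a vertex in $C$. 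Finally, uniqueness (independence of the order in which equal-index-tie cases are resolved, and of intermediate antiedge choices) follows because weighted flips at distinct tiles commute at the level of edge weights, so the final weight function depends only on $\underline{e}$.

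The main obstacle I anticipate is part (ii): proving rigorously that no edge weight ever ends up negative. This is where the hypotheses on $\underline{e}$ — acceptability at non-singular arrows and $\nu(C) \le 1$ — must all be used, and it requires a somewhat delicate case analysis over the local configurations of $(d_j, d_k, e_j, e_k)$ across each arrow, combined with the structural input that the support of the $2$'s forms a tree (Lemma \ref{lemma:d_vector_2}) so that there is no "cycle of $2$'s" around which imbalances could accumulate. The interplay with the singular-arrow edges in $Z$ is the subtlest point, since those edges start with an extra unit of weight precisely to make the criticality bound tight.
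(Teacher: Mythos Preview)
Your overall strategy aligns with the paper's, but there is a genuine gap in part (iii), and a secondary weakness in how you reach the ``allowable flip sequence'' conclusion.

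\textbf{The node-monochromaticity argument is incorrect.} You claim that a path between two nodes of different colors would force a vertex of incident weight $3$. That degree-$3$ obstruction is specific to $M_-$: in Proposition~\ref{prop:minimalinP}, same-colored nodes are already connected in $M_-$, so any additional path to a differently colored node would raise a valence. For a general $D \in \bar P$ this fails---paths between nodes are perfectly compatible with the valence condition, and the whole point of Definition~\ref{defn:nodesposet} is that some valid mixed dimer configurations in $\bar P$ \emph{are} node-polychromatic. The paper instead argues (by induction on $|\underline{e}|$ and a case analysis over the Vatne types) that any polychromatic path forces at least \emph{two} critical arrows with respect to $(\underline{d},\underline{e})$, directly contradicting $\nu(C)\le 1$. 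You invoke $\nu(C)\le 1$ only in passing; in fact it is the entire engine of this step, and the criticality condition plays no role in your weight-nonnegativity argument for singular arrows (the extra edge in $Z$ compensates for the \emph{absence} of the acceptability hypothesis at singular arrows, not for criticality).

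\textbf{Reachability by allowable flips is underspecified.} Saying the weighted-flip sequence ``can be reordered or re-interpreted as a monotone chain in $\bar P$'' skips the actual content: you need a vertex $j$ with $e_j>0$ and $e_j>e_i$ for every arrow $i\to j$, so that subtracting the $j^{\text{th}}$ unit vector still yields nonnegative $n_{i,j}$'s. The paper isolates this as Lemma~\ref{lem:special_j}, using that the support of the $2$'s (or of $\underline{e}$ itself when there are no $2$'s) is a tree and hence has a source; the fully-supported-cycle case is handled separately via the singular arrow. Without identifying such a $j$, commutativity of weighted flips alone does not give you an allowable sequence, since intermediate configurations could still have antiedges in every order.
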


Before proving Theorem \ref{thm:EtoD}, let's first see an example of this algorithm.

\begin{ex}\label{ex:etoD}
Suppose $\underline{d} = (0,1,2,1,2,2,1,1,0)$ and the submodule-indexing vector $\underline{e} = (0,0,0,0,0,2,1,0,0)$. The minimal matching and steps of flipping tiles $5$ twice, then $6$ once is shown in Figure \ref{fig:etoD}.

\begin{figure}[H]
    \centering
    \includegraphics[width=\textwidth]{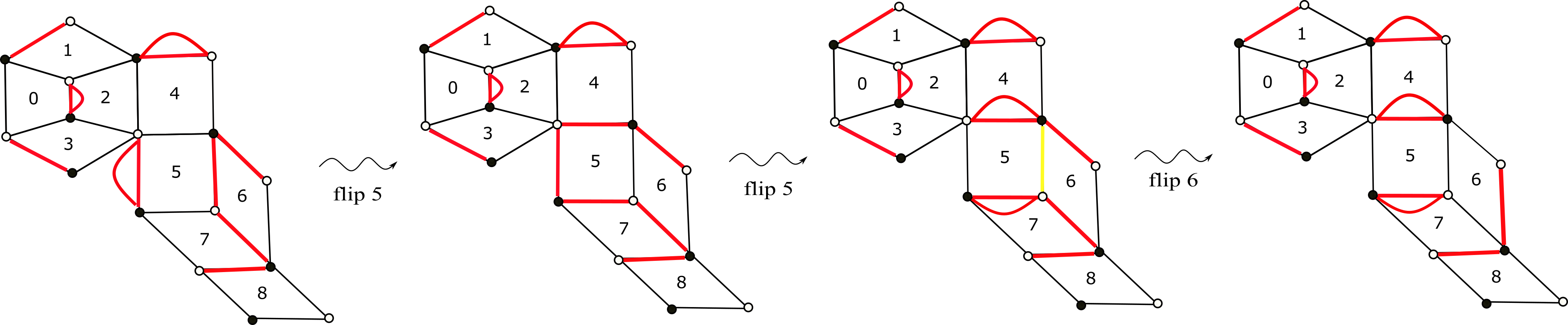}
    \caption{ }
    \label{fig:etoD}
\end{figure}

\end{ex}

In order to prove Theorem \ref{thm:EtoD} and eventually Theorem \ref{thm:bijection}, we need to create a dictionary between mixed dimer configurations and conditions on $\underline{e}$ vectors. We have the following sequence of lemmas that allows us to relate edges distinguished on a mixed dimer configuration with coordinates of a vector. These lemmas mimic arguments found in \cite{prequel} for the acyclic case, but we note the special features that arise when there are cycles in the quiver, and the special role of singular arrows in particular.

\begin{lemma} \label{lemma:MijNij}
Let $Q$ be a type $D_n$ quiver and let $\underline{d}$ be the dimension vector of an indecomposable Jacobian algebra module $M$. Let $\underline{e}$ such that $0 \leq \underline{e} \leq \underline{d}$ and let $D$ be the mixed dimer configuration obtained by flipping tile $k$ $e_k$ number of times from $M_-$. For any non-singular arrow $i \to j \in Q_1$, let $m_{i,j}$ be the number of edges distinguished in $D$ on the edge between tiles $i$ and $j$. Then
     
     $$m_{i,j} = \max(d_i-d_j,0) + (e_j-e_i) =: n_{i,j}.$$
\end{lemma}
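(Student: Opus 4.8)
The plan is to prove the formula by tracking how the quantity $m_{i,j}$ changes as we perform the sequence of flips that produces $D$ from $M_-$. First I would establish the base case: I claim that in the minimal matching $M_-$ itself, $m_{i,j} = \max(d_i - d_j, 0)$ whenever $e = 0$. This follows from the construction of $M_-$ in Definition \ref{defn: minimalmatching}: the edge straddling tiles $i$ and $j$ is distinguished once in $M(G_1)$ precisely when traversing the boundary of $G_1$ passes through it black-to-white clockwise, and distinguished again in $M(G_2)$ under the analogous condition on the boundary of $G_2$. A case analysis on the four possibilities $(d_i, d_j) \in \{(1,1),(2,1),(1,2),(2,2)\}$ (note $i \to j$ non-singular forces $|d_i - d_j| \le 1$, and a support-connectivity argument from Lemma \ref{lemma:d_vector_supp_connected} rules out $d_i = 0$ or $d_j = 0$ when the arrow is relevant) shows the straddling edge is distinguished $\max(d_i - d_j, 0)$ times: zero times when $d_i \le d_j$ (the edge is interior to $G_1$ or to $G_2$, never on the clockwise black-to-white boundary), and once when $d_i = 2, d_j = 1$ (the edge lies on the boundary of $G_2$ in the correct orientation). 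The set $Z$ of singular-arrow edges contributes nothing here since $i \to j$ is assumed non-singular.

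Next I would analyze the effect of a single flip at a tile $k$ on $m_{i,j}$. By Definition \ref{defn: posetrelation}, flipping tile $k$ exchanges the black-to-white clockwise edges on tile $k$ for the white-to-black clockwise edges. For the edge $e_{i,j}$ straddling tiles $i$ and $j$: this edge belongs to the boundary of at most the two tiles $i$ and $j$. Using the "see white on the right" convention of Definition \ref{defn:uncontractedbasegraph} for the orientation $i \to j$, flipping tile $i$ changes the number of distinguished copies of $e_{i,j}$ by $+1$ or $-1$ depending on whether $e_{i,j}$ is currently in the white-to-black or black-to-white position on tile $i$; flipping tile $j$ changes it by the opposite sign; and flipping any other tile leaves $e_{i,j}$ untouched. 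The key point — which matches the known acyclic argument in \cite{prequel} — is that because the two tiles sit on opposite sides of $e_{i,j}$ with opposite colorings at that edge, the net change after flipping tile $i$ a total of $e_i$ times and tile $j$ a total of $e_j$ times is exactly $(e_j - e_i)$, provided the flips never "get stuck" on $e_{i,j}$, i.e. the edge is always available to toggle. This gives $m_{i,j} = \max(d_i - d_j, 0) + (e_j - e_i)$, as desired.

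The main obstacle I anticipate is verifying that the flips genuinely toggle $e_{i,j}$ additively and that the intermediate configurations remain well-defined — in the acyclic case this is handled by the structure of the tiles as squares and hexagons, but here the presence of cycles in $Q$ means some tiles come from vertices in an oriented $k$-cycle, and the straddling edge $e_{i,j}$ could conceivably interact with the $k$-star structure from Remark \ref{rmk:n-star}. I would address this by noting that $e_{i,j}$ for a non-singular arrow $i \to j$ is never incident to the central vertex of a $k$-star (that vertex is matched by an edge in $Z$ coming from the unique singular arrow in the cycle, per the proof of Proposition \ref{prop:minimalinP}), so the toggling behavior at $e_{i,j}$ is local and identical to the acyclic situation. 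The remaining bookkeeping — that flipping tile $i$ exactly $e_i$ times produces a net $\pm e_i$ contribution with the sign determined by the "white on the right" convention rather than, say, a smaller number due to forced re-toggling — follows because after each flip of tile $i$ the edge $e_{i,j}$ returns to a state from which the next flip toggles it in the same cumulative direction; I would make this precise by the same induction on the number of flips used in \cite{prequel}, observing that the singular-arrow edges in $Z$ are fixed throughout and hence never obstruct a flip at a tile adjacent to a non-singular arrow.
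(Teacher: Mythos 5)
Your overall strategy matches the paper's: induct on the number of flips, establishing $m_{i,j}=\max(d_i-d_j,0)$ at $M_-$ and tracking the $\pm 1$ change in $m_{i,j}$ per flip at tile $i$ or $j$.

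However, your base case analysis has a genuine gap. You assert that ``$i \to j$ non-singular forces $|d_i - d_j| \le 1$'' and that Lemma \ref{lemma:d_vector_supp_connected} ``rules out $d_i = 0$ or $d_j = 0$,'' restricting to the cases $(d_i,d_j)\in\{(1,1),(2,1),(1,2),(2,2)\}$. This is incorrect on two counts. First, ``singular'' means a zero map between \emph{nonzero} vector spaces (see the discussion following Example \ref{example:jacobianalgmodule2}); an arrow $i\to j$ with $d_j=0$ is \emph{not} singular, so the lemma genuinely applies to it. Second, and more importantly, the paper's own base case is built precisely around the situation $d_i>0$, $d_j=0$: that is when the edge straddling $i$ and $j$ lies on the boundary of $G_1$, which is the primary mechanism by which $M_-$ distinguishes an interior edge at all. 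Excluding these cases leaves your base case covering only edges that are interior to $G_1$ (distinguished zero times, except for the $G_2$ boundary), and omits the case where $m_{i,j}=\max(d_i-d_j,0)$ takes the value $d_i$ because tile $j$ lies outside the support. Since the formula you are proving has nontrivial content exactly at these support-boundary edges, the omitted cases are not a side issue but the heart of the base case. Repairing this requires dropping the $|d_i-d_j|\le 1$ restriction and arguing, as the paper does, separately for $d_j=0$ (edge on the boundary of $G_1$, distinguished $d_i$ times or zero times according to orientation) and $d_j\ge 1$ (interior to $G_1$, boundary of $G_2$ only when $d_i=2,d_j=1$).

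One smaller imprecision in the inductive step: you say the sign of the change upon flipping tile $i$ depends on ``whether $e_{i,j}$ is currently in the white-to-black or black-to-white position on tile $i$.'' The position of the edge relative to the bipartite coloring is a fixed feature of the base graph determined by the orientation of the arrow $i\to j$; it does not change from flip to flip. It is cleaner (and matches the paper) to say that flipping tile $i$ always changes $m_{i,j}$ by $-1$ if $i\to j$ and by $+1$ if $j\to i$, and flipping tile $j$ does the opposite, which directly yields the net change $e_j-e_i$.
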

     
\begin{proof}
We proceed by induction on $|\underline{e}| := \sum_{k=0}^{n-1} e_k$. When $|\underline{e}| = 0$, i.e. $\underline{e} = (0,0, \dots, 0)$, the associated mixed dimer configuration is $M_-$. By definition of $M_-$, we only distinguish internal edges if $G_1 \subsetneq G$ or $G_2 \subsetneq G$. If $d_i>0$ and $d_j=0$, then $i \in G_1$, but $j \notin G_1$. Since $i \to j$ is oriented black to white with respect to $i$, then $m_{i,j} = d_i = \max(d_i-0,0)$. If $j \to i$, then the edge straddling $i$ and $j$ is oriented white to black clockwise with respect to tile $i$. By definition of $M_-$, we will not distinguish that edge giving that $n_{j,i} = \max(d_j-d_i,0) = 0 = m_{j,i}$.\allowbreak \vspace{1em}

Suppose up to $k \in \nn$, our formula holds. Now, suppose $|\underline{e}| = k+1$, and choose $\tilde{\underline{e}}$ with $|\tilde{\underline{e}}|=k$, such that $\underline{e}$ can be obtained by adding $1$ to the $i^{\text{th}}$ entry of $\tilde{\underline{e}}$. In other words, the tile $i$ has been flipped from the associated mixed dimer configuration associated to $\tilde{\underline{e}}$ to obtain the mixed dimer configuration associated to $\underline{e}$.\allowbreak \vspace{1em}

If $i \to j$, then the edge straddling tiles $i$ and $j$ is oriented black to white clockwise on tile $i$. As tile $i$ has been flipped, $e_i = \tilde{e_i}-1$ and the number of edges distinguishing on the edges straddling $i$ and $j$ decreases by 1 from the dimer configuration associated to $\tilde{\underline{e}}$. Therefore,
     
$$n_{i,j} = \max(d_i-d_j,0) + (e_j-(e_i+1)) = \max(d_i-d_j,0) + (e_j-e_i) - 1.$$
     
Therefore, performing a flip at tile $i$ decreased both $n_{i,j}$ and $m_{i,j}$ by 1. If $j \to i$ in $Q$, then the edge straddling tiles $i$ and $j$ is oriented white to black clockwise on tile $i$. As tile $i$ has been flipped, $e_i = \tilde{e_i}+1$ and the number of edges distinguishing on the edges straddling $i$ and $j$ increases by 1 from the dimer configuration associated to $\tilde{\underline{e}}$. Therefore,

$$n_{j,i} = \max(d_j-d_i,0) + (e_i+1-e_j) = \max(d_i-d_j,0) + (e_i-e_j) + 1.$$
     
Therefore, performing a flip at tile $i$ increased both $n_{j,i}$ and $m_{j,i}$ by 1.
\end{proof}

\begin{lemma}\label{lemma:mijnijsingular} (Analogous to Lemma \ref{lemma:MijNij} for singular arrows). Following the notation of Lemma \ref{lemma:MijNij}, we can make an analogous statement for singular arrows $i \to j \in Q_1$. Namely, 

$$m_{i,j} - 1 = \max(d_i-d_j,0) + (e_j-e_i) =: n_{i,j}.$$
\end{lemma}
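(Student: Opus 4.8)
The statement is the singular-arrow analogue of Lemma \ref{lemma:MijNij}, so the plan is to run the same induction on $|\underline{e}| = \sum_{k} e_k$ that worked there, but track the extra edge that the minimal matching $M_-$ puts on every singular edge. First I would recall that, by Definition \ref{defn: minimalmatching}, for a singular arrow $i \to j$ the edge $e_{i,j}$ straddling tiles $i$ and $j$ always lies in the set $Z$, hence it is distinguished in $M_-$ in addition to whatever it picks up from $M(G_1)$ and $M(G_2)$. So the ``$-1$'' in the displayed formula is simply the contribution of the $Z$-edge, which is present at every step of the flipping process because flips of tiles $i$ or $j$ affect the weight of $e_{i,j}$ but never remove the $Z$-edge itself.

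For the base case $|\underline{e}| = 0$, the configuration is $M_-$ and I would compute $m_{i,j}$ directly: the $Z$-edge contributes $1$, and the remaining multiplicity on $e_{i,j}$ coming from $M(G_1) \sqcup M(G_2)$ is exactly the quantity $\max(d_i - d_j, 0)$ computed in the base case of Lemma \ref{lemma:MijNij} (the boundary-traversal rule distinguishes the black-to-white clockwise edge on tile $i$ precisely when $i$ is in the relevant induced subgraph $G_1$ or $G_2$ but $j$ is not, and this happens $\max(d_i-d_j,0)$ times counting both subgraphs). Hence $m_{i,j} = 1 + \max(d_i - d_j, 0)$, which is $n_{i,j} + 1$ since $e_i = e_j = 0$. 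For the inductive step, write $\underline{e}$ as $\tilde{\underline{e}}$ with $1$ added to the $\ell$-th entry; the only way the count $m_{i,j}$ can change is if $\ell = i$ or $\ell = j$, and in each of those two cases a flip changes the non-$Z$ multiplicity on $e_{i,j}$ by exactly $\pm 1$ in exactly the same way as in the proof of Lemma \ref{lemma:MijNij} — a flip on tile $i$ decreases it by $1$, a flip on tile $j$ increases it by $1$ (orientations of $e_{i,j}$ relative to the two tiles are opposite) — while the $Z$-edge is untouched. Since the right-hand side $n_{i,j} = \max(d_i-d_j,0) + (e_j - e_i)$ changes by the identical amount (flipping tile $i$ lowers $-e_i$, flipping tile $j$ raises $e_j$), the equality $m_{i,j} - 1 = n_{i,j}$ is preserved.

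The one genuine subtlety, and the step I expect to require the most care, is confirming that flips at tiles $i$ and $j$ really do behave on the non-$Z$ part of $e_{i,j}$ exactly as nonsingular flips do in Lemma \ref{lemma:MijNij}, i.e. that the presence of the permanent $Z$-edge does not interfere with the flip mechanics (for instance, by making a vertex on $e_{i,j}$ already valence-$2$ and blocking a flip, or by the flip producing an antiedge). Here I would invoke the fact that flips in Definition \ref{defn: posetrelation} act by exchanging the black-to-white clockwise edges of a tile with the white-to-black clockwise ones, an operation that is purely local to the boundary of that tile and commutes with the fixed $Z$-edges — and in the weighted-flip setting of Definition \ref{defn:weightedflip} we are allowed to pass through negative weights anyway, so no blocking issue arises. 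With that observed, the induction closes and the displayed identity follows.
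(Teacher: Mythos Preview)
Your proposal is correct and follows essentially the same approach as the paper: the key observation is that for a singular arrow $i\to j$ the set $Z$ in Definition \ref{defn: minimalmatching} contributes one extra edge on $e_{i,j}$ in $M_-$, so the count $m_{i,j}$ is shifted by $+1$ relative to the nonsingular formula of Lemma \ref{lemma:MijNij}, and the flip dynamics are identical thereafter. The paper's own proof is in fact much terser --- it simply points to this extra $Z$-edge and says the formula of Lemma \ref{lemma:MijNij} is ``modified by subtracting 1'' --- whereas you spell out the full induction and the subtlety about weighted flips; your extra care is not wrong, but it is more than the paper deems necessary.
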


\begin{proof}
When an arrow $i \to j$ in singular, we add an extra distinguished edge straddling tiles $i$ and $j$ in $M_-$ that is not a boundary edge going black to white clockwise in $G_1$ or $G_2$. Hence, the formula from Lemma \ref{lemma:MijNij} can be modified by subtracting 1 to reflect this extra edge. 
\end{proof}

\begin{cor}
\label{cor:accnonneg}
The values $n_{i,j} := \max(d_i-d_j,0) + (e_j-e_i)$ all satisfy $n_{i,j} \geq 0$ if and only if all weights on interior edges on the mixed dimer configuration $D$ associated to $e$ are nonnegative.  In particular, all arrows $i \to j$ are acceptable with respect to $(\underline{d}, \underline{e})$ if and only if all weights on interior edges are nonnegative on the mixed dimer configuration $D$ associated to $e$.
\end{cor}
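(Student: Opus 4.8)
The plan is to deduce Corollary \ref{cor:accnonneg} directly from Lemma \ref{lemma:MijNij} and Lemma \ref{lemma:mijnijsingular}, since those two results have already done the real combinatorial work. First I would recall that for any edge $e$ of the base graph straddling tiles $i$ and $j$, the weight $w(e)$ on the mixed dimer configuration $D$ associated to $\underline{e}$ is exactly the number of distinguished edges on $e$, i.e. $w(e) = m_{i,j}$ (with multiplicity). Indeed, the weighted-flip prescription of Definition \ref{defn:weightedflip} is defined precisely so that after flipping tile $k$ a total of $e_k$ times the weight of each edge records how many copies of that edge are distinguished in the resulting multiset; so ``weight of an interior edge'' and ``$m_{i,j}$'' are two names for the same integer.

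Next I would split into the two cases covered by the lemmas. For a non-singular arrow $i \to j$, Lemma \ref{lemma:MijNij} gives $m_{i,j} = n_{i,j} = \max(d_i - d_j, 0) + (e_j - e_i)$, so the weight on the corresponding interior edge is nonnegative if and only if $n_{i,j} \geq 0$. For a singular arrow $i \to j$, Lemma \ref{lemma:mijnijsingular} gives $m_{i,j} = n_{i,j} + 1$; since $m_{i,j} \geq 1$ automatically (the extra edge in $Z$ is always distinguished in $M_-$ and is never removed by flips, as flips only exchange black-to-white with white-to-black boundary edges on a tile and the $Z$-edge is not of that form), the weight $m_{i,j}$ being nonnegative is automatic, and moreover $n_{i,j} = m_{i,j} - 1 \geq 0$ exactly when $m_{i,j} \geq 1$, which always holds. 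Thus singular arrows contribute no constraint on either side of the equivalence, and for them both ``$n_{i,j} \geq 0$'' and ``weight nonnegative'' hold unconditionally. Combining the two cases, ranging over all arrows $i \to j \in Q_1$ and hence over all interior edges of $G$, the collection of all $n_{i,j}$ is nonnegative if and only if all interior-edge weights of $D$ are nonnegative. The second sentence of the corollary is then just a restatement, since by Definition \ref{defn:acceptablecritical} an arrow $j \to k$ is acceptable with respect to $(\underline{d}, \underline{e})$ precisely when $e_j - e_k \leq [d_j - d_k]_+$, i.e. $n_{j,k} = [d_j - d_k]_+ + (e_k - e_j) \geq 0$.

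I do not expect a serious obstacle here; the corollary is essentially a bookkeeping consequence of the two preceding lemmas. The one point that deserves a careful sentence is the identification of $w(e)$ with $m_{i,j}$ throughout a sequence of weighted flips performed in an arbitrary order (and possibly passing through negative intermediate weights, i.e. antiedges): I would note that the weighted-flip rule is local and additive, so the final weight of any edge depends only on how many times each of its two incident tiles was flipped, not on the order, and that this final count is exactly $\max(d_i-d_j,0) + (e_j - e_i)$ by the same induction as in Lemma \ref{lemma:MijNij}. With that remark in place the ``if and only if'' is immediate in both the non-singular and singular cases, and the corollary follows.
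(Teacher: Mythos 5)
The paper states this corollary without proof, treating it as an immediate consequence of Lemma~\ref{lemma:MijNij} (and Lemma~\ref{lemma:mijnijsingular}), which is exactly the route you take. Your identification $w(e) = m_{i,j}$, the observation that this is well defined independent of the flipping order, and the translation of $n_{i,j} \geq 0$ into the acceptability inequality $e_i - e_j \leq [d_i - d_j]_+$ all match the intended argument for non-singular arrows, and that part of your write-up is correct and complete.

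Your digression on singular arrows, however, contains a genuine error. You claim the $Z$-edge ``is never removed by flips, as flips only exchange black-to-white with white-to-black boundary edges on a tile and the $Z$-edge is not of that form.'' But the $Z$-edge straddling tiles $i$ and $j$ \emph{is} a boundary edge of each of those two tiles, and the weighted-flip prescription of Definition~\ref{defn:weightedflip} changes the weight of every edge of a flipped tile; the phrase ``not a boundary edge going black to white clockwise in $G_1$ or $G_2$'' in the proof of Lemma~\ref{lemma:mijnijsingular} refers to the boundary of the induced subgraphs $G_1$, $G_2$, not to tile boundaries. Concretely, for a singular arrow $i \to j$ with $d_i = d_j = 1$ (the typical case, e.g.\ $5 \to 3$ in Example~\ref{ex:minimalmatching}) and a vector with $e_i = 1, e_j = 0$, Lemma~\ref{lemma:mijnijsingular} gives $m_{i,j} = n_{i,j} + 1 = (-1) + 1 = 0$, so $m_{i,j} \geq 1$ is \emph{not} automatic. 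This means your ``no constraint on either side'' claim for singular arrows is unsupported: nonnegativity of the weight $m_{i,j} \geq 0$ gives only $n_{i,j} \geq -1$ there. The cleaner resolution is to note that the corollary should be read (as Lemma~\ref{lemma:MijNij} and the submodule-indexing condition in Definition~\ref{submod-indexing} both suggest) as a statement about non-singular arrows, for which your argument already suffices; the singular case need not and should not be folded into the equivalence.
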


\begin{lemma}
\label{lemma:MinfinityN}
(Analogous formula to Lemma \ref{lemma:MijNij} with boundary edges). Let $Q$ be a type $D_n$ quiver and let $\underline{d}$ be the dimension vector of an indecomposable Jacobian algebra module $M$. Let $\underline{e}$ such that $0 \leq \underline{e} \leq \underline{d}$ and let $D$ be the mixed dimer configuration obtained by flipping tile $k$ $e_k$ number of times from $M_-$. Let the outer face of our graph $G$ be indexed by $\infty$. We assign an arrow to each of the boundary edges of $G$ with the convention that we ``see white on the right." To each of these boundary edges, assign a weight to the edge $\alpha$ on tile $i$ as follows:
     
     $$n_{i,\infty}(\alpha) = \max(d_i,0) - e_i ~~~\text{if }i \to \infty \text{ about } \alpha$$
     $$n_{\infty,i}(\alpha) = \max(-d_i,0) + e_i ~~~\text{if }\infty \to i \text{ about } \alpha$$
     
Let $m_{i,\infty}(\alpha)$ (respectively $m_{\infty,i}(\alpha)$) be the number of edges distinguished on $\alpha$ on tile $i$ in $D$ where $i \to \infty$ about $\alpha$ (respectively $\infty \to i$ about $\alpha$.) Then for any boundary edge $\alpha$ on tile $i$,
     
     $$n_{i,\infty}(\alpha) = m_{i,\infty}(\alpha) ~~\text{ and }~~ n_{\infty, i}(\alpha) = m_{\infty,i}(\alpha).$$
\end{lemma}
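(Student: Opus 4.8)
The plan is to run the same induction on $|\underline{e}|=\sum_k e_k$ that was used for Lemma \ref{lemma:MijNij}, but now tracking the boundary edges rather than the interior ones. Because a boundary edge $\alpha$ on tile $i$ borders only the tile $i$ and the outer face $\infty$, it behaves exactly like an interior edge straddling $i$ and a "phantom" tile with dimension $0$ and flip-count $0$: setting $d_\infty = e_\infty = 0$, the weight formulas in the statement are literally $n_{i,\infty}(\alpha)=\max(d_i-d_\infty,0)+(e_\infty-e_i)$ and $n_{\infty,i}(\alpha)=\max(d_\infty-d_i,0)+(e_i-e_\infty)$, i.e. the specialization of $n_{i,j}$ from Lemma \ref{lemma:MijNij}. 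So the proof is essentially a re-run of that argument with this substitution, plus a check that the base case (the configuration $M_-$) is handled correctly on the boundary.

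First I would establish the base case $|\underline{e}|=0$, where $D=M_-$. Here I need the definition of $M_-$ from Definition \ref{defn: minimalmatching}: on the boundary of $G$ (equivalently the boundary of $G_1$, since the outer face only meets tiles $i$ with $d_i\ge 1$ when those tiles are the "outermost" ones — and where $d_i=0$ the boundary edge is not in $M_-$) we distinguish exactly the edges going black-to-white clockwise. If $i\to\infty$ about $\alpha$, the "white on the right" convention means $\alpha$ is oriented black-to-white clockwise on tile $i$, so $\alpha\in M_-$ when $d_i\ge 1$ and $\alpha\notin M_-$ when $d_i = 0$; in the former case $m_{i,\infty}(\alpha)=1=\max(d_i,0)-0$ only if $d_i=1$, so I must be careful: when $d_i=2$ the boundary vertices have valence $2$ and in fact the edge $\alpha$ gets distinguished with multiplicity coming from both $M(G_1)$ and $M(G_2)$, giving $m_{i,\infty}(\alpha)=2=\max(d_i,0)-e_i$. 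When $d_i=0$ we get $m_{i,\infty}(\alpha)=0=\max(d_i,0)$. The case $\infty\to i$ about $\alpha$ is the reverse orientation, white-to-black clockwise on tile $i$, which is never distinguished in $M_-$, so $m_{\infty,i}(\alpha)=0=\max(-d_i,0)+0$ since $d_i\ge 0$. This confirms the base case in all three valence regimes.

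For the inductive step, suppose the formula holds for all $\tilde{\underline{e}}$ with $|\tilde{\underline{e}}|=k$, and let $\underline{e}$ be obtained from such an $\tilde{\underline{e}}$ by adding $1$ to entry $i$; thus $D$ is obtained from the configuration $\tilde D$ for $\tilde{\underline{e}}$ by flipping tile $i$. A flip of tile $i$ only changes edges incident to tile $i$, so boundary edges $\alpha$ on tiles $j\ne i$ are unaffected and the formula persists for them since $e_j$ and $d_j$ are unchanged. For a boundary edge $\alpha$ on tile $i$ itself: if $i\to\infty$ about $\alpha$ then $\alpha$ is black-to-white clockwise on tile $i$, so flipping tile $i$ (exchanging black-to-white for white-to-black clockwise edges on tile $i$) decreases $m_{i,\infty}(\alpha)$ by $1$; correspondingly $e_i$ increased by $1$ so $n_{i,\infty}(\alpha)=\max(d_i,0)-e_i$ also decreased by $1$, and equality is preserved. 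If instead $\infty\to i$ about $\alpha$, then $\alpha$ is white-to-black clockwise on tile $i$, flipping increases $m_{\infty,i}(\alpha)$ by $1$, and $n_{\infty,i}(\alpha)=\max(-d_i,0)+e_i$ also increases by $1$. This completes the induction.

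The main subtlety — and the one place I would slow down — is the bookkeeping at the base case when $d_i=2$ and when boundary edges sit on tiles where $G_2\subsetneq G_1\subsetneq G$, since then $M_-=M(G_1)\sqcup M(G_2)\sqcup Z$ can distinguish a single boundary edge twice, and I must confirm that "$\max(d_i,0)-e_i$" with $e_i=0$ correctly yields multiplicity $2$. I also want to note explicitly that $Z$ consists of edges straddling two distinct tiles (coming from singular arrows $i\to j$ with both $d_i,d_j>0$), hence never contributes a boundary edge, so singular arrows play no role here and there is no analogue of the "$-1$" correction of Lemma \ref{lemma:mijnijsingular}. Everything else is a mechanical transcription of the proof of Lemma \ref{lemma:MijNij} under the identification $d_\infty=e_\infty=0$.
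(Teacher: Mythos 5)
Your proposal is correct and follows essentially the same strategy as the paper: induction on $|\underline{e}|$, with the base case $D = M_-$ checked by reading off multiplicities from the black-to-white/white-to-black orientation conventions, and the inductive step tracking how a single flip of tile $i$ shifts $m_{i,\infty}$ and $m_{\infty,i}$ by $\mp 1$ in tandem with the $n$-values. Your framing of $\infty$ as a phantom tile with $d_\infty = e_\infty = 0$ so that the formulas become the specialization of Lemma~\ref{lemma:MijNij} is a nice conceptual gloss that the paper does not spell out, and your explicit check of the $d_i = 2$ base case (multiplicity $2$ coming from both $M(G_1)$ and $M(G_2)$) and your remark that $Z$ never contributes boundary edges are worthwhile clarifications, but they do not change the structure of the argument.
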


\begin{proof}
As in the proof of Lemma \ref{lemma:MijNij}, we proceed by induction on $|e|$. Suppose $|e|=0$, then the associated mixed dimer configuration is $M_-$. Let $\alpha$ be a boundary edge on a tile $i$. If $i \to \infty$ across the edge $\alpha$, then $m_{i,\infty}(\alpha) = \max(d_i,0) = d_i.$ Moreover, the edge $\alpha$ is oriented black to white clockwise giving that it is distinguished $d_i$-times in $M_-$. Therefore, $m_{i,\infty}(\alpha) = d_i = n_{i,\infty}(\alpha)$. If $\infty \to i$ across the edge $\alpha$, we have $m_{\infty,i}(\alpha) = \max(-d_i,0) = 0.$ Moreover, the edge $\alpha$ is oriented white to black clockwise giving that it is never distinguished in $M_-$. Therefore, $m_{i,\infty}(\alpha) = 0 = d_i = n_{i,\infty}(\alpha)$.\allowbreak \vspace{1em}

Suppose up to $k \in \nn$, our formula holds. Now, suppose $|\underline{e}| = k+1$, and choose $\tilde{\underline{e}}$ with $|\tilde{\underline{e}}|=k$, such that $\underline{e}$ can be obtained by adding $1$ to the $i^{\text{th}}$ entry of $\tilde{\underline{e}}$. In other words, the tile $i$ has been flipped from the associated mixed dimer configuration associated to $\tilde{\underline{e}}$ to obtain the mixed dimer configuration associated to $\underline{e}$. Suppose that $\alpha, \beta$ are boundary edges on tile $i$ where $\infty \to i$ across the edge $\alpha$ and $i \to \infty$ across the edge $\beta$. After flipping tile $i$, the $w(\beta)$ decreases by 1 and $w(\alpha)$ increases by 1. Since $\tilde{e_i} = e_i+1$, $m_{i,\infty}(\beta)$ is transformed by
$$m_{i,\infty}(\beta) = \max(d_i,0) - (e_i+1) = \max(d_i,0) -e_i-1,$$
and $m_{\infty,i}(\alpha)$ is transformed by
$$m_{\infty,i}(\alpha) = \max(-d_i,0) + e_i +1.$$

Hence, we have that $n_{i,\infty}, m_{i,\infty}$ and respectively $n_{\infty,i}, m_{\infty,i}$ are transformed in the same way after a flip at tile $i$.
\end{proof}

We need one more technical lemma regarding the graphical structure of a quiver $Q$ mutation-equivalent to a type $D_n$ Dynkin diagram and the possible subquivers arising as the support of a submodule-indexing vector $\underline{e}$.

\begin{lemma} \label{lem:special_j}
Suppose $Q$ is a quiver mutation-equivalent to an orientations of a type $D_n$ Dynkin diagram and $\underline{e}$ is a choice of submodule-indexing vector with respect to $\underline{d}$ for some $d$-vectors $\underline{d}$ that is not fully supported by only 1's on any cycle of $Q$.  Then $Q$ is guaranteed to contain a vertex $j$ with the property that $e_j > 0$ and such that for any arrow pointing $i \to j$, we have $e_j > e_i$.
\end{lemma}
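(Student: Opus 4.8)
The plan is to induct on the ``depth'' of vertices in $Q$ relative to the sinks of the induced subquiver $Q^{\supp(\underline e)}$, using the acceptability of non-singular arrows to control how $e$-values can increase along arrows. First I would reduce to working entirely inside $Q^{\supp(\underline e)}$: since we only care about vertices $j$ with $e_j>0$ and arrows $i\to j$ with $i$ also in the support (an arrow from outside the support has $e_i = 0 < e_j$ automatically), it suffices to find a vertex $j$ of $Q^{\supp(\underline e)}$ that is not ``dominated'' by any in-neighbor, i.e. $e_j > e_i$ for every arrow $i\to j$ inside the support. If $Q^{\supp(\underline e)}$ were acyclic this is immediate: take $j$ to be any sink of a topological order along which $e$ is weakly increasing, or more carefully, walk backwards against arrows as long as $e$ weakly increases; acyclicity forces this walk to terminate at the desired $j$. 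So the real content is handling the case where $\supp(\underline e)$ contains an oriented cycle of $Q$.

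The key structural step is to invoke Vatne's classification (Theorem \ref{thm:vatne}) together with Lemmas \ref{lemma:d_vector_supp_connected} and \ref{lemma:d_vector_2}: any oriented cycle inside $\supp(\underline e)\subseteq\supp(\underline d)$ lies in a type $A_m$ block $Q^{(\ell)}$ (or, in type IV, is the central cycle), and on such a cycle $\underline d$ restricts to a vector of $0$'s, $1$'s and $2$'s. The hypothesis explicitly excludes $\underline d$ being all $1$'s on a cycle, so along any cycle $C$ contained in $\supp(\underline e)$ there is a vertex with $d$-value $2$ and, since the $2$-support is a tree by Lemma \ref{lemma:d_vector_2}, a vertex $c\in C$ where the arrow of $C$ leaving $c$ points to a vertex of $d$-value $1$ (the cycle must exit the $2$-tree). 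The acceptability condition applied to that arrow $c\to c'$ on the cycle, with $(d_c,d_{c'})$ of the form $(2,1)$, forces $e_c - e_{c'} \le [2-1]_+ = 1$, while acceptability on the other cycle-arrow entering the $2$-region forces a strict drop of the $e$-value somewhere; chasing these inequalities around $C$ shows $e$ cannot be constant on $C$, so $C$ contains a vertex where $e$ strictly exceeds its cycle-predecessor. More simply: on a cycle, $\sum_{\text{arrows }i\to j\text{ in }C}(e_j-e_i)=0$, so if $e$ were weakly increasing along every arrow of $C$ it would be constant on $C$; acceptability at the $(2,1)$-transition rules that out unless the whole cycle has $e\equiv e_c$, which combined with the $d$-values and $0\le e\le d$ we can exclude.

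From this I would run the backward-walk argument globally: start at any vertex $j_0$ with $e_{j_0}>0$; if some in-neighbor $i$ has $e_i \ge e_{j_0}$, move to $i$ (note $e_i>0$, so $i\in\supp(\underline e)$) and repeat. This walk either terminates at a vertex $j$ with the desired property, or it cycles — and if it cycles, it traces an oriented cycle $C$ of $Q$ on which $e$ is weakly increasing along every arrow, hence constant; the previous paragraph shows this constant-$e$ cycle is impossible given that $\underline d$ is not all $1$'s on $C$. Hence the walk terminates, producing $j$. I would also need to double-check the boundary case where the backward walk reaches a vertex whose only in-neighbors lie outside $\supp(\underline e)$: then trivially $e_j > 0 = e_i$ for all such $i$, and $j$ works. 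The main obstacle I anticipate is the bookkeeping in the cyclic case — precisely pinning down, from Lemma \ref{lemma:d_vector_2} and the catalog in Appendix \ref{section:d-vectors}, that every oriented cycle in $\supp(\underline e)$ genuinely passes through a $(d=2)$-to-$(d=1)$ transition (rather than being entirely inside the $2$-tree, which Lemma \ref{lemma:d_vector_2} forbids, or entirely in $1$'s, which the hypothesis forbids), and then verifying the acceptability inequality is strict at that transition so the ``constant on the cycle'' scenario is truly ruled out.
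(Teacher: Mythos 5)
Your overall plan (reduce to $\supp(\underline e)$, find a vertex not ``dominated'' by any in-neighbor) is reasonable, but the argument you rely on to rule out cycles does not work, and you take a considerably more roundabout route than the paper does.

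The paper's proof is a short two-case argument. If $\underline e$ contains a $2$, it considers the induced subquiver on $\{i : e_i = 2\}$; since $e_i = 2$ forces $d_i = 2$ and (by Lemma~\ref{lemma:d_vector_2}) $\{i : d_i = 2\}$ is a tree, this subquiver is acyclic and hence has a source $j$, and then $e_j = 2 > e_i$ for every in-neighbor $i$ because no in-neighbor of a source lies in the $e=2$ set. If $\underline e$ has no $2$'s, the paper takes a source of $\supp(\underline e)$, asserting (rather tersely, ``by assumption'') that this subquiver is a tree, so again $e_j = 1 > 0 = e_i$ for every in-neighbor. Your proposal never isolates the clean first case, where no cycle argument is needed at all.

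The genuine gap in your proposal is the claimed contradiction that rules out a constant-$e$ cycle. You assert that acceptability at a $(d_c,d_{c'}) = (2,1)$ transition ``forces a strict drop of the $e$-value somewhere,'' but it does not. With $e$ constant equal to $1$ on a cycle $C$, the arrow $c \to c'$ with $(d_c,d_{c'})=(2,1)$ gives $e_c - e_{c'} = 0 \le [2-1]_+ = 1$, and the returning arrow $c'' \to d$ with $(d_{c''},d_d)=(1,2)$ gives $e_{c''} - e_d = 0 \le [1-2]_+ = 0$. Both inequalities hold with $e \equiv 1$, so ``chasing these inequalities around $C$'' yields no contradiction; the scenario you want to exclude is entirely compatible with the acceptability condition. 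The situation is actually worse: the submodule-indexing condition (Definition~\ref{submod-indexing}) only requires acceptability for \emph{non-singular} arrows, and in any oriented cycle of a type $D_n$ quiver that is fully supported in the module at least one arrow is singular (the Jacobian relations force a zero map), so acceptability cannot constrain the whole cycle. To close the gap you would need a different ingredient — either the structural facts the paper invokes about which subquivers can be supports (Lemma~\ref{lemma:d_vector_2} plus the $d$-vector catalog of Appendix~\ref{section:d-vectors}), or the criticality bound $\nu(C) \le 1$, which your argument never touches. You should also note that the first case of the paper's proof ($\underline e$ containing a $2$) makes the backward walk unnecessary there, leaving only the $\underline e \in \{0,1\}^n$ case to worry about.
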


\begin{proof}
If the given submodule-indexing vector $\underline{e}$ contains a $2$, then let $Q^{e}$ denote the subquiver of $Q$ containing exclusively the vertices $i$ such that $e_i = 2$.  By Lemma \ref{lemma:d_vector_2}, the subquiver $Q^{e}$ must be a tree and therefore must contain a source.  Letting $j$ denote the vertex of such a source, we see that vertex $j$ vacuously satisfies $e_j > e_i$ for any arrow pointing $i \to j$.

On the other hand, if the given submodule-indexing vector $\underline{e}$ contains no $2$'s, then we let $Q^{e}$ denote the subquiver of $Q$ given by the support of $\underline{e}$, i.e. containing exclusively the vertices $i$ such that $e_i = 1$.  By assumption, the subquiver $Q^{e}$ again must be a tree and therefore must contain a source, and we can choose vertex $j$ just as above. 
\end{proof}

With these lemmas, we are now ready to prove Theorem \ref{thm:EtoD}.

\begin{proof}
Suppose $\underline{e}$ is submodule-indexing and let $D$ be the mixed dimer configuration produced as prescribed in the statement of Theorem \ref{thm:EtoD}. In order to show that $D \in P$, we must show that $D$ is a mixed dimer configuration reachable via a sequence of flips from $M_-$ and is node-monochromatic. We first show that $D$ is reachable via a sequence of flips from $M_-$ by induction on $|e|$. For the base case, we agree to associate the submodule-indexing vector $\underline{e} = (0,0,\dots,0)$ to the minimal matching $M_-$ which is reachable by an empty sequence of flips from $M_-$. \allowbreak  \vspace{1em}

Suppose that for any nonzero submodule-indexing vector $\underline{e}$ with $|e|=k$, the associated mixed dimer configuration $D$ is reachable via a sequence of flips from $M_-$. Now suppose that $|e| = k+1$ and that we have added 1 to the $i^{\text{th}}$ entry of $\underline{e}$. As $\underline{e}$ is submodule-indexing, any arrow involving vertex $i$ must be acceptable with respect to $(\underline{d}, \underline{e})$ i.e. $e_i - e_j \leq \max(d_i-d_j,0)$. By Lemma \ref{lemma:MijNij}, we have that $n_{i,j} = m_{i,j}$ which implies that the number of edges straddling tiles $i$ and $j$ must be non-negative. Moreover, by Lemma \ref{lemma:MinfinityN}, since $0 \leq \underline{e} \leq \underline{d}$, we have that for any boundary edge $\alpha$ on tile $i$, $n_{i,\infty}(\alpha) = \max(d_i,0) - e_i = d_i - e_i \geq 0$ giving that $m_{i,\infty}(\alpha)$, the number of edges on $\alpha$ in $D$ is non-negative. Similarly, $n_{\infty,i}(\alpha) = \max(-d_i,0) + e_i = 0+e_i \geq 0$ giving that $m_{\infty,i}(\alpha)$, the number of edges on $\alpha$ in $D$ is non-negative. If $i \to j$ is a singular arrow, the number of edges straddling $i$ and $j$ will have only increased by 1, so $m_{i,j} \geq 0$ as well. Hence, the resulting mixed dimer configuration $D$ has edges with all non-negative weights. 
\allowbreak \vspace{1em}

We now show that there exists some order in which we can read the entries of $\underline{e}$ which will yield a mixed dimer configuration with non-negative weights at each flip along the way in the sequence.  Even though we constructed $\underline{e}$ inductively in a way such that the last entry increased by one is the $i^{\text{th}}$ entry, it is not necessarily the case that the corresponding mixed dimer configuration $D$ is reachable by a flip sequence ending in a flip of tile $i$. \allowbreak \vspace{1em}

Instead, here we invoke Lemma \ref{lem:special_j} with $j$ denoting the label of the vertex whose existence is posited by the lemma. If $\underline{d}$ is fully supported by 1's on a cycle of $Q$, then there exists some singular arrow $a \to b$ with $d_a=d_b=1$. In this case, let $j$ be the vertex $b$. We let $\underline{e'}$ be the result of subtracting the $j^{\text{th}}$ unit vector from $\underline{e}$. Then, for any non-singular arrow $i \to j$, we have $n_{i,j}' = \max(d_i-d_j,0) + (e_j-1-e_i)$ must be nonnegative. If the arrow is the reverse orientation $j \to i$, then $n_{j,i}' = \max(d_j-d_i,0) + (e_i - e_j+1)$ is also nonnegative. Moreover, on the boundary, we also have $n_{i, \infty}' = d_i - e_i$ and $n_{\infty, i}' = e_i$ are nonnegative. In the case of a fully supported cycle with only 1's and singular arrow $a \to j$, by Lemma \ref{lemma:mijnijsingular}, we have that $m_{a,j} - 1 = \max(d_a-d_j,0) + e_j-e_a$. Since $e_a \leq 1$, $e_j \leq 1$, their difference is at most -1 implying that $n_{a,j} = e_j-e_a+1$ is nonnegative. \allowbreak  \vspace{1em}

Therefore $\underline{e'}$ corresponds to a mixed dimer configuration $D'$.  Since $|e'|=k$, by the inductive hypothesis, $D'$ is reachable by a sequence of allowable flips from $M_-$. This gives that $D$ itself is reachable from $M_-$ by a sequence of allowable flips, where we tack on a flip of tile $j$.  This last flip is allowable since both $D'$ and $D$ are mixed dimer configurations, i.e. with nonnegative weights on edges, and the difference $\underline{e} - \underline{e'}$ is the $j^{\text{th}}$ unit vector.\allowbreak  \vspace{1em}

We now show $D$ is a node monochromatic mixed dimer configuration. We again proceed by induction on $|e|$. By Proposition \ref{prop:minimalinP}, we have that $M_-$ is node-monochromatic, establishing the base case. So suppose that up to $k \in \nn$, we have that when $|e| = k$, the resulting mixed dimer configuration $D$ is node-monochromatic. Now take $\underline{e}$ with $|e| = k+1$ where we have added 1 to the $i^{\text{th}}$ entry of $\underline{e}$. We may assume that $D$ was obtained via a sequence of $k+1$ allowable flips from $M_-$. We claim that the only way we could have produced a path connecting nodes of different colors is if there is more than one critical arrow in $Q$ with respect to $(\underline{d}, \underline{e})$.\allowbreak \vspace{1em}

If $Q^{\text{supp}}$ is acyclic, then we employ the proof of Theorem 4.2.2 from \cite{prequel}. Moreover, in type IV where we have full support on the central $m$-cycle, we may also employ the argument in the acyclic case when the type $A_m$ part of the quiver has no oriented cycles. This is because such a path connecting \textcolor{blue}{blue} to \textcolor{olive}{green} nodes can only occur on an acyclic subquiver of $Q^{\text{supp}}$. Namely, a path only occurs in a mixed dimer configuration when there are valence 2 tiles and must end after a consecutive tile has valence 1. In any $\underline{d}$-vector in type IV with a 2, the central $k$-cycle is fully supported where the $\underline{d}$-vector entries are all 1. Hence, a path connecting the type $D_n$ part of the quiver to the type $A_m$ part of the quiver will only use the tile with the \textcolor{blue}{blue} nodes and the consecutive tile whose $\underline{d}$-vector entry is 2 and no other tiles on the $m$-cycle - reducing to the acyclic case. \allowbreak
\vspace{1em}

So it suffices to show that $D$ is node-monochromatic when $Q^{\text{supp}}$ contains an oriented cycle on the type $A_m$ part of the quiver. First note that in types I, II and III, the $\textcolor{red}{red}$ or $\textcolor{blue}{blue}$ nodes cannot be connected without creating two critical arrows. Let $p,q,r$ be the forking vertices in the type $D_n$ part of $Q^{\text{supp}}$. Then, for any orientation of the fork, paths between $\textcolor{red}{red}$ and $\textcolor{blue}{blue}$ nodes create two critical arrows as shown in Figure \ref{fig:nopathsbwredandblue}.\allowbreak
\vspace{1em}

\begin{figure}
    \centering
    \includegraphics[scale=.22]{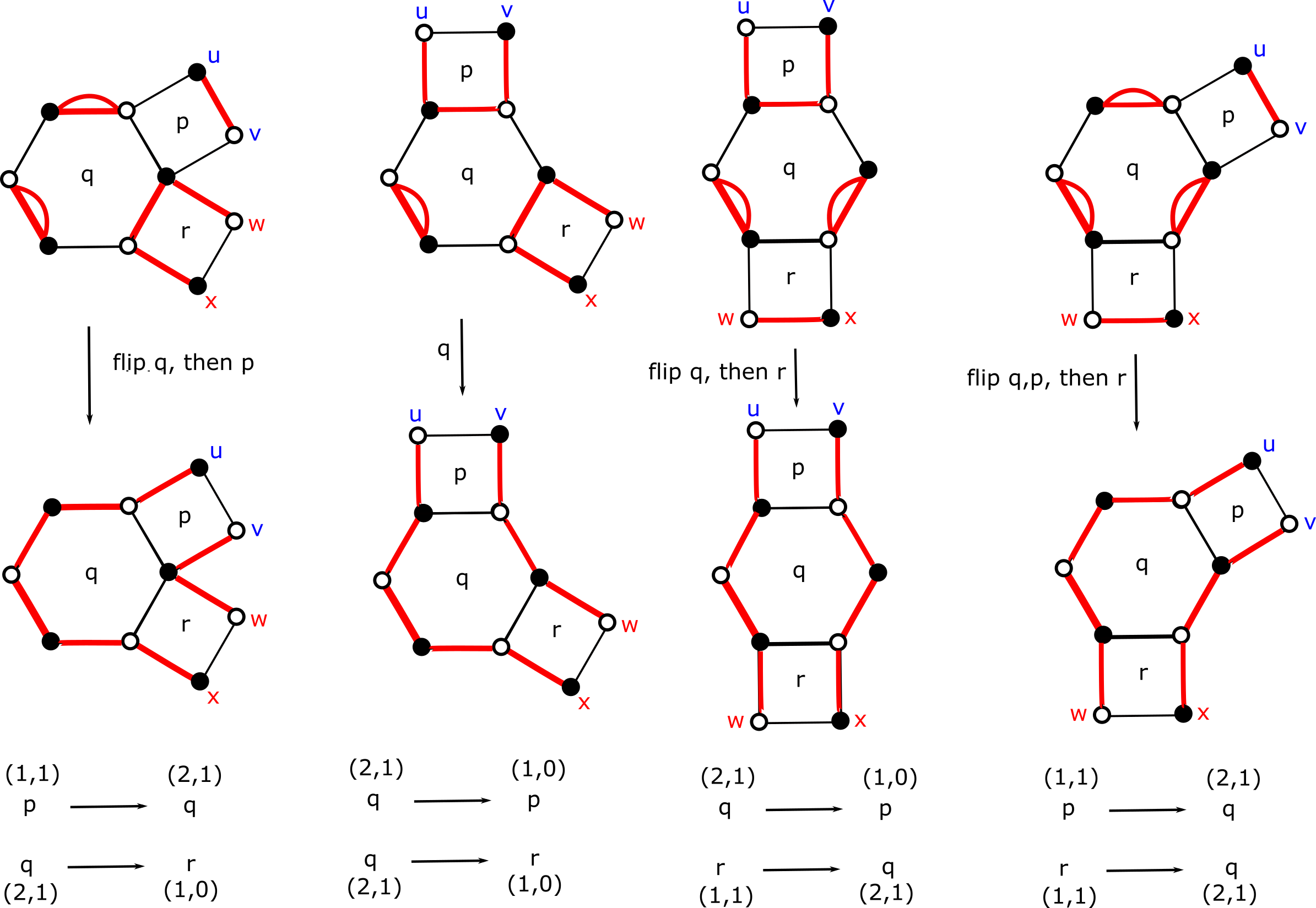}
    \caption{No paths between $\textcolor{red}{red}$ and $\textcolor{blue}{blue}$ nodes can occur when $\underline{e}$ satisfies the criticality condition.}
    \label{fig:nopathsbwredandblue}
\end{figure}

Suppose that the cycle is on the type $A_m$ part of the quiver and we aim to show that the $\textcolor{olive}{green}$ nodes cannot attach to either the $\textcolor{red}{red}$ or $\textcolor{blue}{blue}$ nodes without having more than one critical arrow. Note that it suffices to show that there is no connection between the $\textcolor{blue}{blue}$ nodes and $\textcolor{olive}{green}$ nodes as in types I, II and III, the $\textcolor{blue}{blue}$ nodes and $\textcolor{red}{red}$ nodes are symmetric and type IV has no \textcolor{red}{red} nodes. Suppose that tile $a$ has the two $\textcolor{blue}{blue}$ nodes $\textcolor{blue}{w,x}$ and let $b$ be the vertex connecting the type $D_n$ part of the quiver to the type $A_m$ part of the quiver. Suppose that the 3-cycle that has full support occurs at the set vertices $i,j,k$ read in cyclic order in the type $A_m$ part of the quiver. In order to connect the $\textcolor{olive}{green}$ nodes to the $\textcolor{blue}{blue}$ nodes, we must have flipped tiles connecting the 3-cycle $i,j,k$ with the vertices in the type $A_m$ part of the quiver connecting to tile $b$, then tile $b$. For example, if $b$ is directly attached to tile $i$, then we first flip tile $i$, then $b$ as shown in Figure \ref{fig:mubmui}:

\begin{figure}[h]
    \centering
    \includegraphics[scale=.3]{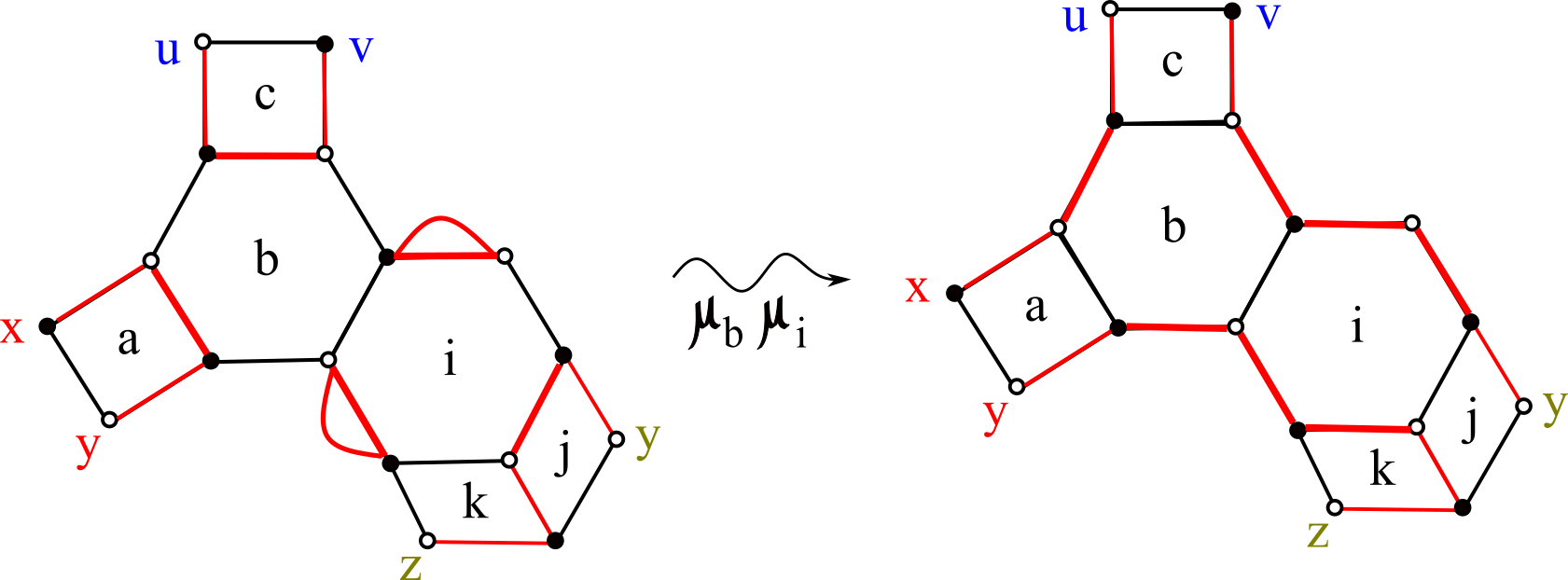}
    \caption{ }
    \label{fig:mubmui}
\end{figure}

With this, we see that this implied that the quiver must have had more than one critical arrow with respect to $(\underline{d}, \underline{e})$ as the arrow $b \to a$ has $(d_b,e_b) = (2,1)$ and $(d_a,e_a) = (1,0)$ and the arrow $i \to j$ has $(d_i,e_i) = (2,1)$ and $(d_j,e_j) = (1,0)$. Similarly, if $a \to b$, we would flip tiles $b$, then $a$ giving that the arrow $a \to b$ is critical with $(d_b,e_b) = (2,1)$ and $(d_a,e_a) = (1,1)$ with $i \to j$ has $(d_i,e_i) = (2,1)$ and $(d_j,e_j) = (1,0)$ still critical. The same argument holds if the 3-cycle $i,j,k$ was not directly adjacent to $b$ using a flip sequence along the vertices connecting $b$ to the 3-cycle $i,j,k$. \allowbreak \vspace{1em}

Therefore, even when there is a cycle in $Q^{\supp}$, we see that node monochromatic paths imply that the quiver must have had more than one critical arrow with respect to $(\underline{d}, \underline{e})$. Hence, the dimer $D$ associated to a submodule-indexing vector must be node monochromatic implying that $D \in P$ as desired.
\end{proof}

To complete the proof of Theorem \ref{thm:bijection}, we now describe the other side of the bijection i.e. the map taking mixed dimer configurations to submodule-indexing $\underline{e}$-vectors.

\begin{thm}
\label{thm:Dtoe}
Let $\gamma$ be an arc superimposed on a triangulated once-punctured $n$-gon and let $Q$ be the quiver associated to this triangulation. Let $\underline{d}$ be the crossing vector of the arc $\gamma$. Let $G$ be the base graph constructed using the data of $Q$ and $\underline{d}$ as described in Definition \ref{defn: basegraph}. Suppose $D$ is a mixed dimer configuration in $P$ i.e. is node monochromatic. There exists a unique way to produce a submodule-indexing vector $\underline{e}$. The process is given by the following procedure: 
     
     \begin{enumerate}
         \item Superimpose $D$ with $M_-$ on the base graph $G$ to obtain the multigraph $D \sqcup M_-$. In this superimposition, if $D$ and $M_-$ have any edge $e \in D \cap M_-$ in common on $G$, delete one copy of $e$ from $D \sqcup M_-$. Call the resulting multigraph $M_1$.
         \item Using the edges $M_1$, create a cycle of maximal length $\ell_1 > 2$, call it $C_1$. For all faces $i$ enclosed by $C_1$, add $+1$ to $v_i$ in $\underline{v}$ and delete $C_1$ from $M_1$. Call the resulting multigraph $M_2 = M_1 \setminus C_1$.
         \item Examine $M_2$ and if there are any cycles of length $\ell_2 > 2$, find a cycle of maximal length and call it $C_2$. For all faces $i$ enclosed by $C_2$, add +1 to $v_i$ and delete $C_2$ from $M_2$.
         \item Iterate this process of deleting cycles of of largest length and adding 1's to the vector $\underline{v}$ until nothing is left besides 2-cycles. The resultant vector $\underline{v}$ corresponds to $\underline{e}$.
     \end{enumerate}
\end{thm}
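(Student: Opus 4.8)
The plan is to show that the procedure described is well-defined (the output vector $\underline{v}$ does not depend on choices made during the cycle-deletion process), that the resulting vector $\underline{e} := \underline{v}$ is in fact submodule-indexing with respect to $\underline{d}$, and that this map is a two-sided inverse to the map of Theorem \ref{thm:EtoD}. The key observation underpinning everything is that $D \sqcup M_-$ (after cancelling common edges) is a multigraph in which every vertex has even degree: vertices incident only to tiles with $d_i = 0$ are untouched, vertices forced to valence $2$ by both $D$ and $M_-$ contribute degree $0,2,$ or $4$, and the remaining vertices carry one edge from each configuration for a total contribution that is even. Consequently $M_1$ decomposes into edge-disjoint cycles (and isolated $2$-cycles, i.e. doubled edges), and the ``enclosed faces'' count is exactly the symmetric-difference region between $D$ and $M_-$. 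This is the same bookkeeping as in the acyclic case \cite{prequel}, so the first step is to port that argument, flagging the two new features: the extra edges in $Z$ coming from singular arrows (which appear in $M_-$ and hence can be cancelled or can bound cycles), and the presence of honest cycles of length $>2$ coming from $k$-stars.

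First I would establish well-definedness. The set of faces receiving a $+1$ is, by a standard planar-duality / winding-number argument, independent of how $M_1$ is partitioned into cycles: a face $i$ gets $v_i = k$ precisely when it lies in the interior of $k$ of the nested cycle regions, equivalently when the number of edges of $M_1$ separating face $i$ from the outer face $\infty$ (counted with multiplicity, mod the doubled-edge $2$-cycles) equals... — concretely, $v_i$ equals $m_{i,j}^{D} - m_{i,j}^{M_-}$ summed appropriately, which is intrinsic. Cleanest is to phrase it via the edge-weight count: using Lemma \ref{lemma:MijNij}, Lemma \ref{lemma:mijnijsingular}, and Lemma \ref{lemma:MinfinityN}, define for each tile $i$ the integer $e_i$ by the local equations $m_{i,j} = \max(d_i - d_j,0) + (e_j - e_i)$ (with the singular correction) across every arrow at $i$, plus the boundary equations; one checks this linear system has a unique solution $\underline{e}$, and that the cycle-deletion procedure computes exactly this $\underline{e}$ because deleting a maximal cycle enclosing a set $S$ of faces decrements $m_{i,j}$ across the boundary of $S$ by exactly one on each side, matching a uniform $+1$ to $v_i$ for $i \in S$. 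This simultaneously proves $0 \le \underline{e} \le \underline{d}$: the lower bound because $D$ is a genuine mixed dimer configuration (weights nonnegative, so by Corollary \ref{cor:accnonneg} the $n_{i,j}$ are $\ge 0$), and the upper bound because the valence condition on tiles with $d_i = 2$ forces enough edges that $v_i$ cannot exceed $d_i$ — this last point needs the combinatorial claim that no face can be enclosed by more than $d_i$ of the cycles, which follows from the $M_2$ (valence-2) portion of $M_-$ already occupying the ``innermost'' region.

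Next, the acceptability condition (condition 2 of Definition \ref{submod-indexing}) is immediate from Corollary \ref{cor:accnonneg}: since $D$ has all nonnegative interior edge weights, every non-singular arrow satisfies $n_{i,j} = m_{i,j} \ge 0$, i.e. is acceptable. The criticality condition $\nu(C) \le 1$ is where the node-monochromatic hypothesis enters and is, I expect, the main obstacle. The argument should run contrapositively: if $\underline{e}$ had $\nu(C) \ge 2$, i.e. two critical arrows touching vertices in $C = \{i : (d_i,e_i) = (2,1)\}$, then I must exhibit a path in $D$ between nodes of different colors, contradicting node-monochromaticity. Here I would case on Vatne type. In types I, II, III, by the remark that these degenerate to type I on the support of a $\underline{d}$-vector with a $2$, two critical arrows force a path of valence-$2$ vertices running from the tile carrying the red nodes, through the fork, to the tile carrying the blue nodes (or from either to the green nodes along the type $A_m$ part), exactly reversing Figures \ref{fig:nopathsbwredandblue} and \ref{fig:mubmui}. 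In type IV the central $k$-cycle is fully supported with all entries $1$, the blue nodes sit on the $k$-star and the singular arrow out of it, and a second critical arrow on the type $A_m$ spike again produces a blue–green path. This is essentially the converse of the final part of the proof of Theorem \ref{thm:EtoD}, so I would state it as: a path between differently-colored nodes in $D$ exists if and only if $\nu(C) \ge 2$, and use the ``only if'' direction here. Finally, to close the bijection I would verify that composing $D \mapsto \underline{e}$ with the map $\underline{e} \mapsto D'$ of Theorem \ref{thm:EtoD} returns $D$: both $D$ and $D'$ satisfy the same local edge-count equations of Lemmas \ref{lemma:MijNij}–\ref{lemma:MinfinityN} with the same $\underline{e}$ and $\underline{d}$, and a mixed dimer configuration in $P$ is determined by its edge-multiplicities on every edge, hence $D = D'$; the reverse composition $\underline{e} \mapsto D \mapsto \underline{e'}$ is handled identically. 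Order-preservation follows because adding a unit vector to $\underline{e}$ corresponds to one additional allowable flip, hence a cover relation in $P$, and conversely.
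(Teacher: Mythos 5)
Your proposal reaches the right conclusions and leans on the same technical backbone (Lemmas \ref{lemma:MijNij}, \ref{lemma:mijnijsingular}, \ref{lemma:MinfinityN}, and Corollary \ref{cor:accnonneg}), but it differs from the paper's proof in two substantive places.

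First, the paper establishes $\underline{v}$ is submodule-indexing by a double induction: on the number of cycle-deletion steps, and within each step on the number of faces enclosed by $C_i$, checking $0\le v\le d$, acceptability, and criticality at each stage. You instead propose characterizing $\underline{e}$ intrinsically as the unique solution to the local system $m_{i,j}=\max(d_i-d_j,0)+(e_j-e_i)$ (with the singular and boundary corrections), and arguing that each cycle deletion decrements the relevant $m_{i,j}$'s uniformly. This is a genuinely different and arguably cleaner route, and it has the bonus that it settles the well-definedness question (independence of which maximal cycle is extracted first) explicitly, which the paper treats only implicitly. The trade-off is that you then owe a proof that the linear system has a unique integer solution and that the cycle-deletion procedure computes it, which you have only sketched.

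Second, for the criticality condition $\nu(C)\le 1$ you propose the contrapositive ``$\nu(C)\ge 2\Rightarrow$ node-polychromatic path exists,'' citing it as the converse of the argument in Theorem \ref{thm:EtoD}. But Theorem \ref{thm:EtoD} proves only the other direction (path $\Rightarrow$ more than one critical arrow); the converse you need is a new assertion, not a consequence of that proof, and would require its own case analysis. The paper does not take the contrapositive route at all: it argues directly by exhausting the four possible local orientation patterns (i)--(iv) at the tile $j$ that was just added, ruling out the violating patterns using the connectedness of the enclosed-tile region, the structure of $M_-$ (in particular which edge of tile $q$ lies in $M_-$ when the singular arrow $p\to q$ is involved), and the order-of-flips constraint. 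So your plan for this step is plausible but has a gap --- the ``if and only if'' you want to invoke is not yet established --- and closing it would require roughly the same casework the paper carries out directly. The remainder (acceptability from Corollary \ref{cor:accnonneg}, the two-sided-inverse check, and order-preservation) matches the paper's treatment.
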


Before proving Theorem \ref{thm:Dtoe}, we compute an example of this side of the bijection.

\begin{ex}\label{ex:Dtoe}
In this example, let $\underline{d} = (0,1,2,1,2,2,1,1,0)$ and consider the base graph shown in Figure \ref{fig:Dtoe}. We consider a mixed dimer configuration $D$ and we show that its associated $\underline{e}$-vector is given by $\underline{e} = (0,0,0,0,0,2,1,0,0)$. When we superimpose $D$ with $M_-$ and delete any paired sets of edges in their intersection, we obtain $M_1$. In this case, $C_1$ is the cycle around the tiles labeled 5 and 6. This indicates that $\underline{v} = (0,0,0,0,0,1,1,0,0)$ at this step. After we delete the edges used to make this cycle, we obtain $M_1 \setminus C_1$ and see a cycle around the tile 5. This gives that $\underline{v} = (0,0,0,0,0,2,1,0,0)$ at this step and we see are left with an empty disjoint union of two cycles after this deletion. Hence, $\underline{v} = (0,0,0,0,0,2,1,0,0) = \underline{e}$ is the submodule-indexing vector associated to $D$ which is illustrated in Figure \ref{fig:Dtoesteps}.
\end{ex}

\begin{figure}[H]
    \centering
     \includegraphics[scale=.22]{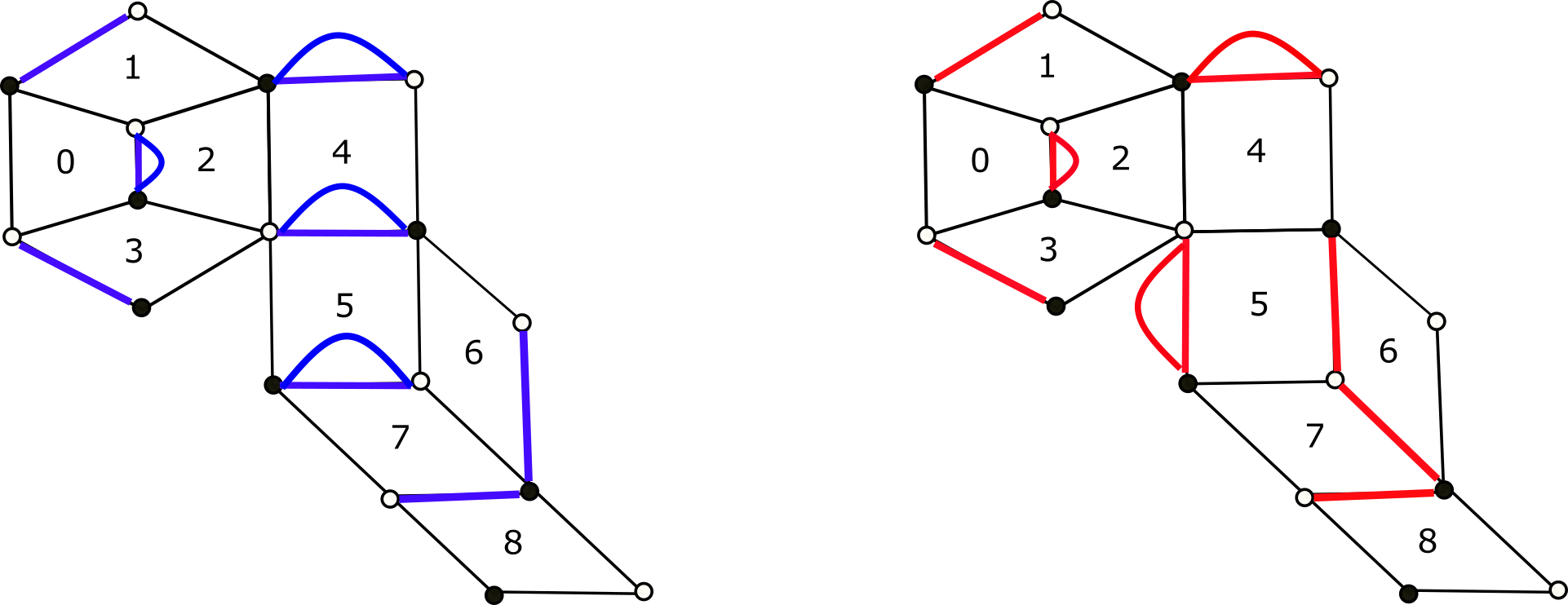}
    \caption{On the left, we have the mixed dimer configuration $D$ in blue and on the right, we have the minimal matching $M_-$ in red.}
    \label{fig:Dtoe}
\end{figure}
\begin{figure}[H]
    \centering
    \includegraphics[scale=.22]{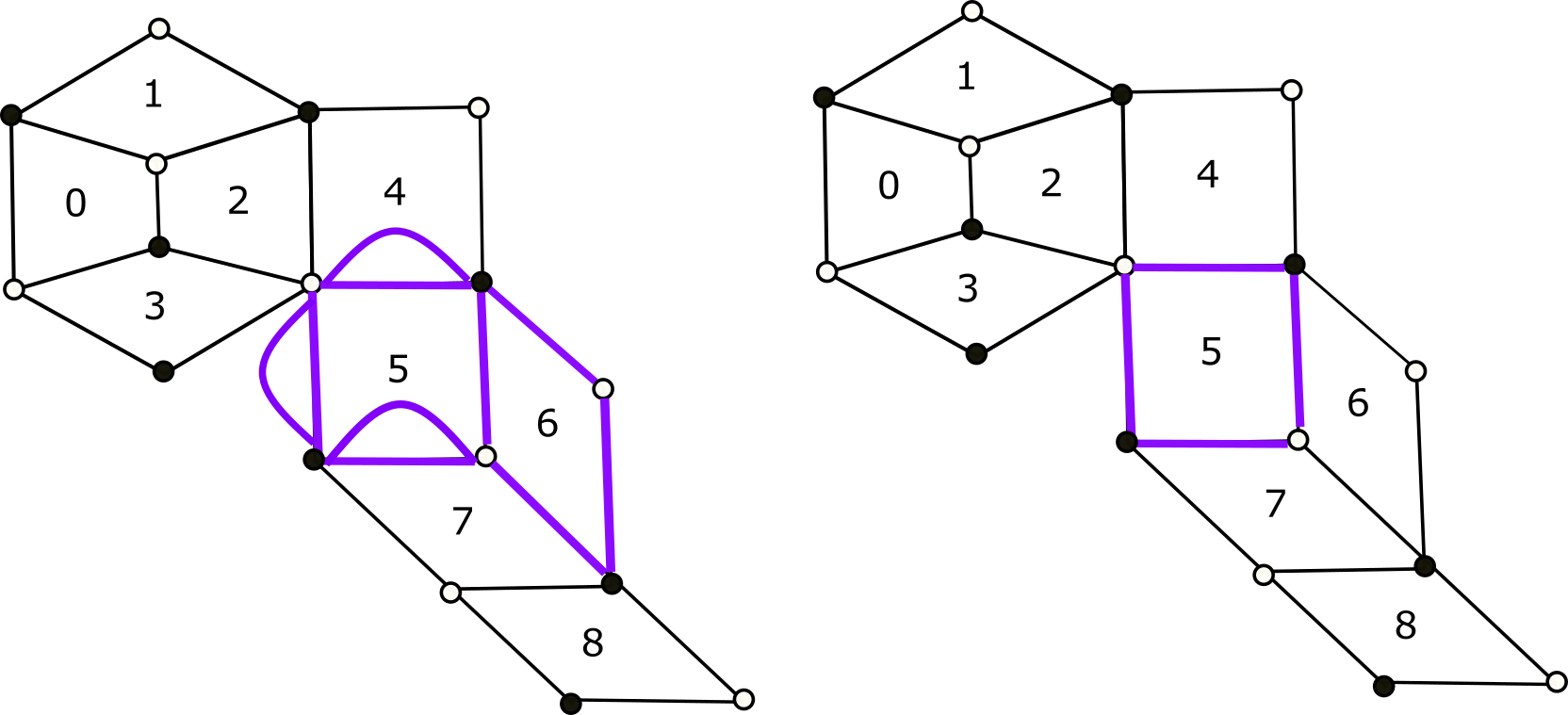}
    \caption{On the left, we have the superimposition of $D$ with $M_-$ after deleting any pairs of edges in the intersection, $M_1$. After deleting the cycle around tiles 5 and 6, we obtain $M_2$ illustrated on the right.}
    \label{fig:Dtoesteps}
\end{figure}

Now we prove Theorem \ref{thm:Dtoe}.

\begin{proof}
Suppose $D$ is a non-minimal mixed dimer configuration in the poset $P$ and initialize $\underline{v} = (0,0,\dots,0)$. To begin the algorithm, we must find a cycle of maximal length strictly larger than 2. In the superimposition of $D \sqcup M_-$ we call $M_1$, we obtain that each vertex that was previously valence 2 is now valence 4 in $M_1$ and each vertex that was valence 1 is now valence 2 in $M_1$. Because $D \neq M_-$, there must exist some cycle of length at least 4 in their superimposition as $D$ differs from the minimal matching by at least one flip. Hence, there exists some cycle of maximal length $\ell_1 \geq 4$.\allowbreak

\vspace{1em}

We claim that after each deletion of a maximal length cycle, the vector $\underline{v}$ is a submodule-indexing vector with respect to $\underline{d}$. To do this, we proceed by induction on $n$, the number of deletions of cycles required so that $D \sqcup M_-$ is a (possible empty) disjoint union of 2-cycles. \allowbreak  \vspace{1em}

When $n=0$, we agree to associate $M_-$ to $\underline{e} = (0,0,\dots,0)$ which is submodule-indexing. Suppose that $D'$ is a mixed dimer configuration that requires $i-1$ iterations of our algorithm to obtain the associated resultant vector $\underline{v}'$. Suppose further that this vector $\underline{v}'$ is submodule-indexing. Now suppose that $D$ is a mixed dimer configuration that requires one more application of our algorithm than $D'$ i.e. $D$ takes $i$ iterations and the resulting vector is called $\underline{v}$. Namely, we have $\underline{v}$ is obtained from adding 1's to $\underline{v'}$ to the tiles enclosed by cycle $C_i$. After the $i^{\text{th}}$ iteration of the algorithm, we have $0 \leq v_k \leq d_k$ for every $k$, is satisfied as the only way a cycle can enclose tile $j$ in the superimposition of $D$ and $M_-$ is if $D$ differs from $M_-$ at a tile $j$. This precisely happens when a flip occurred at tile $j$ and implicit in an occurrence of a flip is that $d_j>0$. In $\underline{v}-\underline{v'}$, the only new 1's occur if $C_i$ enclosed those corresponding tiles, $v_{j} \leq d_j$ for all $j$ enclosed by $C_i$. This gives that $0 \leq \underline{v} \leq \underline{d}$.\allowbreak  \vspace{1em}

To show that $\underline{v}$ is acceptable and satisfies the boundedness condition on the number of critical arrows, we need to induct on the number of tiles enclosed by $C_i$. We first show that $\underline{v}$ is acceptable. If $C_i$ enclosed a single tile, call it $j$, then to verify that the acceptability condition is satisfied, it suffices to check that any non-singular arrows involving vertex $j$. Note that $v_{j} = 1$ or $2$ because $j$ could have been enclosed in a previous cycle in an earlier iteration. Suppose that $j$ is the tail of an arrow i.e. we have some arrow $k \to j \in Q_1$. If $v_k \leq 1$, then acceptability is satisfied as $v_k-v_j \leq 0$. If $v_k = 2$, then $d_k=2$. Moreover, acceptability is satisfied as long as $v_j \neq 1$ and $d_j = 2$. However, by Lemma \ref{lemma:MijNij}, this implies that $m_{k,j} < 0$ contradicting Corollary \ref{cor:accnonneg}. Now suppose $j$ is the head of an arrow i.e. there is an arrow $j \to k \in Q_1$. Then if $v_k=2$, then acceptability is satisfied as $v_j-v_k \leq 0$. If $0 \leq v_k \leq 1$, then the only case that $j \to k$ fails acceptability is when $v_k=v_j-1$ and $d_j=d_k$. However, by Lemma \ref{lemma:MijNij}, this implies that $m_{k,j} < 0$ contradicting Corollary \ref{cor:accnonneg}. \allowbreak \vspace{1em}

Now, assume that if $C_i$ enclosed $k$ cycles, then the resulting $\underline{v}$ is submodule-indexing. Now, suppose that $C_i$ encloses $k+1$ tiles, $t_1, \dots, t_{k+1}$. We aim to show that $\underline{v} + \underline{e_{t_{k+1}}} = \underline{y}$ is submodule-indexing. It suffices to show that any non-singular arrow with vertex $t_{k+1}$ satisfies the acceptability condition. Since, $t_{k+1}$ is enclosed by $C_i$ and may have been enclosed by another cycle in a previous iteration, we have that $y_{t_{k+1}} \geq 1$. Suppose that $t_{k+1}$ is the tail of an arrow i.e. we have some arrow $j \to t_{k+1} \in Q_1$.\allowbreak
\vspace{1em}

If $y_j \leq 1$, then acceptability is satisfied as $y_j-y_{t_{k+1}} \leq 0$. If $y_j = 2$, then $d_k=2$. Moreover, acceptability is satisfied as long as $y_k \neq 1$ and $d_k = 2$. However, by Lemma \ref{lemma:MijNij}, this implies that $m_{j,t_{k+1}} < 0$ contradicting Corollary \ref{cor:accnonneg}. Now suppose $t_{k+1}$ is the head of an arrow i.e. there is an arrow $t_{k+1} \to j \in Q_1$. Then if $y_j=2$, then acceptability is satisfied as $y_{t_{k+1}}-y_j \leq 0$. If $0 \leq y_j \leq 1$, then the only case that $t_{k+1} \to j$ fails acceptability is when $y_j=y_{t_{k+1}}-1$ and $d_j=d_k$. However, by Lemma \ref{lemma:MijNij}, this implies that $m_{k,j} < 0$ contradicting Corollary \ref{cor:accnonneg}. \allowbreak \vspace{1em}

Therefore, we have shown that if $C_i$ encloses $k+1$ cycles, the resulting vector $\underline{y}$ is submodule-indexing. By induction, we have that after the $i^{\text{th}}$ step of our algorithm, $\underline{v}$ is submodule-indexing. Therefore, any vector obtained via this algorithm must satisfy the acceptability condition. Hence, the resultant $\underline{e}$ must be acceptable. \allowbreak  \vspace{1em}

Now, we show that $\underline{v}$ satisfies the criticality condition. In order to do this, we again induct on the number of steps needed to complete our algorithm as well as induct on the number of tiles enclosed by cycle $C_i$ at each step. Suppose that $C_1$ encloses a unique tile $j$ giving that $\underline{v} = \underline{e_j}$, the unit vector with a 1 in the $j^{\text{th}}$ position. Note that if $d_j \neq 2$, no critical arrows can be formed as $C := \{i \in Q ~:~ (d_i,e_i) = (2,1)\} = \emptyset$. So, suppose that $d_j = 2$. Then, as $(d_j,v_{j}) = (2,1)$, $j \in S$ and the only way that the criticality condition could have failed is if this created two critical arrows i.e. the quiver and associated pair $(\underline{d}, \underline{e})$ is
     
$$k \longleftarrow j \longrightarrow \ell$$
$$(1,0) \leftarrow (2,1) \rightarrow (1,0)$$
\noindent where we must have $v_{k}=0=v_{\ell}$ as the only tile enclosed by a cycle is $j$. Note that since the subgraph $G_2$ associated to all tiles with $\underline{d}$ entry 2 is connected which means that $j$ must be the unique tile in $G_2$. Moreover, since both critical arrows would need to have source $j$, it is impossible that $j,k, \ell$ form a 3-cycle in the sense of the quiver. Hence, we are reduced locally to the acyclic case and can import the argument for the base case found in Theorem 4.2.3 of \cite{prequel}. \allowbreak \vspace{1em}

Now, suppose that if $C_1$ encloses $k$ tiles, the resulting vector $\underline{v}$ satisfies the criticality condition. We now aim to show that if $C_1$ encloses $k+1$ tiles, then the resulting vector $\underline{w} := \underline{v} + \underline{e_j}$ satisfies the criticality condition. Note that by our inductive assumption, $(\underline{v}, \underline{d})$ has at most one critical arrow. Suppose that $(\underline{w}, \underline{d})$ created 0 critical arrows. Since $\underline{w}$ only differs from $\underline{v}$ in the $j^{\text{th}}$ entry, it suffices to only check any arrows involving $j$. Moreover, we need to show that two critical arrows cannot be created by adding $+1$ to $\underline{v}$ to obtain $\underline{w}$. The only ways that this could happen is if $j$ is the unique vertex in $C$, i.e. $d_j$ is the unique 2 in $\underline{d}$ and the quiver and associated pair $(\underline{d}, \underline{e})$ has one of the following local orientations:

\begin{align*}
p~~ \longleftarrow ~~&j~~ \longleftarrow ~~q \tag{i}\\ 
(1,0) \leftarrow (2&,1) \leftarrow (1,1)\\
p~~ \longrightarrow ~~&j~~ \longrightarrow ~~q \tag{ii}\\
(1,1) \rightarrow (2&,1) \rightarrow (1,0)\\
p~~ \longleftarrow ~~&j~~ \longrightarrow ~~q \tag{iii}\\
(1,0) \leftarrow (2&,1) \rightarrow (1,0)\\
p~~ \longrightarrow ~~&j~~ \longleftarrow ~~q \tag{iv}\\
(1,1) \rightarrow (2&,1) \leftarrow (1,1)
\end{align*}
        
Note that case (iv) will not be possible as if $w_p = w_q = 1$, this means that both $p$ and $q$ were enclosed by $C_1$ in which case, since $j$ is in between these vertices, this could not happen without having enclosed $j$ as well by the connectedness of the tiles enclosed by $C_1$.\allowbreak  \vspace{1em}

Moreover, the orientation in case (iii) occurred in the base case. Namely, since $C_1$ enclosed a connected set of tiles, $p$ and $q$ cannot both be terminal vertices. Therefore, it suffices to analyze cases (i) and (ii). Since these cases are symmetric, we focus on case (i). \allowbreak \vspace{1em}

Note that if $j,p,q$ are not in a cycle in the quiver, then we are reduced to the acyclic case. Therefore, we rely on the proof of Theorem 4.2.3 in \cite{prequel}. If $j,p,q$ indeed are in a cycle in the quiver, then it is either a 3-cycle in the type $D_n$ part of the type II the spike of a type IV surface, the 4-cycle in the type III surface or is in the type $A_m$ part of any type $D_n$ quiver. Note that, $j,p,q$ cannot be in a larger central cycle in the type IV surface, if there is a 2 in the $\underline{d}$-vector, it occurs at the attaching spike of the quiver rather than in the central cycle.\allowbreak
\vspace{1em}

If $j,p,q$ forms a 3-cycle in the type $D_n$ part of the type II surface or a 4-cycle in the type $D_n$ part of the type III surface, then the nodes \textcolor{red}{$u,v$} and \textcolor{blue}{$w,x$} are on tiles $p,q$ and this degenerates into the acyclic case. If the quiver is type IV, then the \textcolor{blue}{blue} nodes are on tile $p$. Namely, the node \textcolor{blue}{$w$} is on the white vertex on the edge straddling $p$ and $q$. In order for $w_q=1$ and $w_p=0$, there must have existed a previous cycle enclosing tile $q$ and not tile $p$. However, by definition of $M_-$, tile $q$ would have only have been enclosed by a previous cycle if $j$ also was -- as the only edge enumerated in $M_-$ on tile $q$ is the straddling $p,q$ representing the singular arrow $p \to q$. Hence, this case is also impossible. Similarly, if $j,p,q$ forms a 3-cycle in the type $A_m$ part of the quiver. In order to flip the tile $q$, we must flip the tile $p$ giving that $w_q=1$ and $w_p=0$ is impossible. Therefore, $\underline{w}$ satisfies the criticality condition if $(\underline{v}, \underline{d})$ created 0 critical arrows. \allowbreak \vspace{1em}

If $(\underline{v}, \underline{d})$ created one critical arrow either $k \to \ell$ with $(d_k,v_k) = (2,1)$ and $(d_{\ell}, v_{\ell}) = (1,0)$ or $k' \rightarrow \ell'$ with $(d_{k'},v_{k'}) = (1,1)$ and $(d_{\ell'}, v_{\ell'}) = (2,1)$, the only way for the $\underline{w}$ to create another critical arrow is if we added 1 to the source of an arrow whose sink is in $C$. Namely, $w_j=1$ because if $w_j=2$, then no more critical arrows could be created. Therefore, $\underline{w}$  satisfies the criticality condition if $(\underline{v}, \underline{d})$ created one critical arrow. Thus, $\underline{v}$ is submodule-indexing as desired.
\end{proof}

Now that we have demonstrated the map in both directions, we show that the maps described in Theorem \ref{thm:EtoD} and Theorem \ref{thm:Dtoe} are inverses of one another to complete the proof of Theorem \ref{thm:bijection}.

\begin{proof}

    By definition of the algorithm in Theorem \ref{thm:Dtoe}, we see that the number of steps in the algorithm exactly equals $|e|$, where $\underline{e}$ is the resulting vector associated to the dimer $D$. Therefore, we will induct on $|e|$, i.e. the number of steps needed to perform the algorithm in Theorem \ref{thm:Dtoe}, to show that the maps described in Theorems \ref{thm:Dtoe} and \ref{thm:EtoD} are inverses of one another. \allowbreak
    \vspace{1em}
     
     First suppose that $|e| = 0$. Then, by both the algorithm for Theorem \ref{thm:Dtoe} and for Theorem \ref{thm:EtoD}, we have that $\underline{e}$ is associated to $M_-$ and vice versa. Suppose that some $m \in \nn$, when $|e| = k \leq m$, i.e. when we perform the first $k$ steps of the algorithm in Theorem \ref{thm:Dtoe}, the maps are inverses of one another. Let $\underline{e}'$ be such that $|e'| = m$. Then define $\underline{e}$ to be the resulting vector from adding 1 to the $i^{\text{th}}$ entry of $\underline{e}'$, so $|e| = m+1$. \allowbreak
     \vspace{1em}
     
     Applying Theorem \ref{thm:EtoD} to $\underline{e}$ and $\underline{e}'$, let $D'$ be the mixed dimer configuration corresponding to $\underline{e}'$ and $D$ be the mixed dimer configuration associated to $\underline{e}$. Note that as $|e| = m+1$, we had to perform $m+1$ flips from $M_-$ to obtain $D$ and by assumption, we had to perform one more flip at tile $i$ to obtain $D$ from $D'$. \allowbreak
     \vspace{1em}
     
     We aim to show that using the algorithm in Theorem \ref{thm:Dtoe}, the vector $\underline{e}$ is the vector associated to $D$. If we take the superimposition of $D \sqcup D'$, note that will consist of all 2-cycles and exactly one 4-cycle enclosing tile $i$. This implies that $D \sqcup M_-$ will also have at least one cycle containing $i$. Moreover, $D \sqcup M_-$ will have exactly one more cycle containing $i$ than that of $D' \sqcup M_-$. Hence, the corresponding vector obtained by performing the algorithm in Theorem \ref{thm:Dtoe} must be $\underline{e}' + \underline{e_i}$ which is exactly $\underline{e}$.\allowbreak
     \vspace{1em}
     
     Next we show the reverse direction. Suppose that given a mixed dimer configuration $D$, using the algorithm in Theorem \ref{thm:EtoD}, we obtain $\underline{e}$ such that $|e|=m+1$. Let $D'$ be the mixed dimer configuration associated to $\underline{e}'$, where $\underline{e}'$ is the result of subtracting 1 from the $i^{\text{th}}$ entry of $\underline{e}$. By the algorithm in Theorem \ref{thm:Dtoe}, we must have enclosed the tile $i$ in dimer $D$ $e_i$ number of times and $(e_i-1)$ number of times in dimer $D'$. By our inductive assumption, let $\underline{e}'$ be the vector associated to $D'$. Note that the order in which we flip tiles in the algorithm in Theorem \ref{thm:EtoD} does not matter, so to obtain the mixed dimer configuration $D$ from $D'$, it suffices to flip the tile $i$. Therefore, we see that the mixed dimer configuration associated to $\underline{e}$ prescribed by the algorithm in Theorem \ref{thm:EtoD} is exactly the mixed dimer configuration $D$ we began with. Therefore, we conclude that these maps are indeed inverses of each other.

\end{proof}

\begin{ex}
Examples \ref{ex:etoD} and \ref{ex:Dtoe} demonstrate each side of the bijection are inverses of one another.
\end{ex}

Now that we have established the connection between mixed dimer configurations and submodule-indexing vectors, we provide the connection between submodule-indexing vectors and representation theory. Ultimately, this gives the connection to the $F$-polynomial through representation theory. The following theorem is adapted from \cite{dwz} and \cite{tran}.

\begin{thm}\label{connectiontoreptheory}
Let $T$ be an ideal triangulation of a once-punctured $n$-gon and $J_T$ be its associated Jacobian algebra. Let $M$ be an indecomposable $J_T$-module associated to an arc $\gamma$ with dimension vector $\underline{d}$. Let $Gr_{\underline{e}}(M)$ be the variety of all submodules of $M$ with dimension vector $\underline{e}$ known as the quiver Grassmannian. Let $\chi$ denote the Euler characteristic. Then
$$F_{\underline{d}} = \sum_{\underline{e}} \chi(Gr_{\underline{e}}(M)) \prod_{i=1}^n u_i^{e_i}$$

where the sum ranges over $\underline{e} \in \zz^n$ with $0 \leq \underline{e} \leq \underline{d}$. 
\end{thm}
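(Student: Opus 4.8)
The plan is to deduce Theorem \ref{connectiontoreptheory} by combining the representation-theoretic formula for $F$-polynomials of \cite{dwz} with the combinatorial identification of the dimension vectors of submodules of $M$ that we have already built up. Recall from the excerpt's statement of the theorem of \cite{dwz} that for any finitely generated $J_T$-module $M$ one has $F_M = \sum_{\underline{e}} \chi(Gr_{\underline{e}}(M)) \underline{y}^{\underline{e}}$, where the sum is over all $\underline{e}$ that actually arise as dimension vectors of submodules of $M$. So the content of Theorem \ref{connectiontoreptheory} is purely the claim that, for $M = M_\gamma$ an indecomposable $J_T$-module coming from an arc $\gamma$ with $\dim M = \underline{d}$, the dimension vectors of submodules of $M$ are \emph{exactly} the submodule-indexing vectors $\underline{e}$ of Definition \ref{submod-indexing}, and moreover for the remaining $\underline{e}$ with $0 \le \underline{e} \le \underline{d}$ we have $Gr_{\underline{e}}(M) = \emptyset$, so that $\chi = 0$ and the two summation ranges agree.

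First I would reduce to showing the set equality $\{\dim N : N \subseteq M \text{ a submodule}\} = \{\underline{e} : \underline{e}\text{ submodule-indexing w.r.t.\ }\underline{d}\}$. For the forward inclusion, given a submodule $N$ of $M$, I would verify each of the three defining conditions of Definition \ref{submod-indexing} directly from the module structure of $M_\gamma$ as recorded in Example \ref{ex:puncturerepresentation}: the bound $0 \le \dim N \le \dim M$ is automatic; the acceptability condition at a non-singular arrow $j \to k$ follows because the map $\varphi_\alpha$ attached to such an arrow has (generic) maximal rank $\min(d_j, d_k)$ on the relevant segments, forcing $\dim(\varphi_\alpha(N_j)) \le \dim N_k$ and hence $e_j - e_k \le [d_j - d_k]_+$; and the criticality condition $\nu(C) \le 1$ is a feature of how submodules can be supported across a vertex with $(d_i,e_i)=(2,1)$, which is exactly the structural constraint Tran isolates in \cite{tran} and which we re-prove for the surface setting. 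For the reverse inclusion I would, given a submodule-indexing $\underline{e}$, explicitly construct a submodule of $M_\gamma$ of dimension vector $\underline{e}$ by choosing the subspace $N_i \subseteq (M_\gamma)_i$ at each vertex compatibly with the maps — using acceptability to guarantee the local compatibility at each arrow and the singular-arrow analysis to handle the cyclic vertices. This is where I would lean on the explicit form of the maps $N_{\varphi_\alpha}$ in the excerpt and on Lemmas \ref{lemma:d_vector_supp_connected} and \ref{lemma:d_vector_2}, which pin down the shape of the relevant subquivers.

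Once the set equality is in hand, I would argue that for $\underline{e}$ with $0 \le \underline{e} \le \underline{d}$ but $\underline{e}$ \emph{not} submodule-indexing, $Gr_{\underline{e}}(M) = \emptyset$: this is immediate since by definition that quiver Grassmannian parametrizes submodules of dimension vector $\underline{e}$, and we have just shown there are none. Hence $\chi(Gr_{\underline{e}}(M)) = 0$ for all such $\underline{e}$, so enlarging the summation range in the \cite{dwz} formula from ``dimension vectors of submodules'' to ``all $\underline{e}$ with $0 \le \underline{e} \le \underline{d}$'' does not change the sum, and with the identification $y_i \leftrightarrow u_i$ and $\underline{y}^{\underline{e}} = \prod_{i=1}^n u_i^{e_i}$ this is precisely the claimed formula $F_{\underline{d}} = \sum_{\underline{e}} \chi(Gr_{\underline{e}}(M)) \prod_{i=1}^n u_i^{e_i}$. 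I also need to note $F_M$ depends only on the isomorphism class of $M$, and that $M_\gamma$ is well-defined up to isomorphism by the construction recalled from \cite{d17}, so $F_{\underline{d}} := F_{M_\gamma}$ is unambiguous.

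The main obstacle I anticipate is the reverse inclusion in the set equality — constructing an honest submodule of $M_\gamma$ with prescribed dimension vector $\underline{e}$ — particularly at vertices lying on an oriented cycle where an arrow of $M_\gamma$ is singular. In the acyclic setting this is handled by Tran's analysis \cite{tran}, but here I must check that the singular arrows (which contribute a zero map rather than a rank-$\min(d_j,d_k)$ map) do not obstruct the existence of the subspace configuration, and that the criticality condition $\nu(C)\le 1$ is exactly the right bookkeeping to rule out the genuinely non-realizable $\underline{e}$. I expect to dispatch this case-by-case along the lines of the proof of Theorem \ref{thm:Dtoe}, invoking Vatne's classification (Theorem \ref{thm:vatne}) and the catalog of crossing vectors in Appendix \ref{section:d-vectors} to reduce to finitely many local pictures; the singular-arrow bookkeeping from Lemma \ref{lemma:mijnijsingular} and Corollary \ref{cor:accnonneg} is the technical tool that makes each local check go through.
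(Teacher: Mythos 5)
Your final formula is right, but you have misread what Theorem~\ref{connectiontoreptheory} actually asserts, and as a consequence you have set yourself a much harder task than the theorem requires. Theorem~\ref{connectiontoreptheory} says nothing about submodule-indexing vectors; it is simply the \cite{dwz} theorem recorded at the end of Section~\ref{section:prelim}, with the summation range enlarged from ``$\underline{e}$ that actually arise as dimension vectors of submodules of $M$'' to ``all $\underline{e}$ with $0 \le \underline{e} \le \underline{d}$.'' The entire content of that enlargement is (i) any submodule $N \subseteq M$ has $0 \le \dim N_i \le \dim M_i = d_i$ at every vertex, so the original range is contained in the new one, and (ii) for an $\underline{e}$ in the new range that is not the dimension vector of any submodule, $Gr_{\underline{e}}(M) = \emptyset$ and $\chi(\emptyset) = 0$, so the extra terms vanish. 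Together with the routine identification $u_i \leftrightarrow y_i$ and the well-definedness of $M_\gamma$ up to isomorphism (which you correctly flag), that is the whole argument; the paper itself essentially just cites \cite{dwz} and \cite{tran} here.

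The set equality you propose to establish --- that the dimension vectors of submodules of $M_\gamma$ are precisely the submodule-indexing vectors of Definition~\ref{submod-indexing} --- is genuinely substantive, but it is the core of Theorem~\ref{thm:4.2Tran}, not of Theorem~\ref{connectiontoreptheory}. Moreover, the paper's proof of Theorem~\ref{thm:4.2Tran} \emph{uses} Theorem~\ref{connectiontoreptheory} as an input (``Otherwise, \ldots\ Theorem~\ref{connectiontoreptheory} and Lemma~\ref{lemma-of-connectiontoreptheory} imply that the coefficient of $u^{\underline{e}}$ in $F_{\underline{d}}$ is $0$''), so your proposal reverses the paper's logical dependency. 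It is not strictly circular, since you propose to re-derive the set equality directly from the explicit module structure and the combinatorial lemmas rather than quote Theorem~\ref{thm:4.2Tran}, but it amounts to re-proving a substantial later result as a subroutine for a statement whose proof needs nothing beyond the convention $\chi(\emptyset) = 0$ and the trivial bound $\dim N \le \dim M$. You should strip your argument down to the two observations above and move the set-equality machinery to where it belongs, namely the proof of Theorem~\ref{thm:4.2Tran}.
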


The following lemma, proven in \cite{tran}, helps to determine when there is a subrepresentation of a given dimension. 

\begin{lemma}\label{lemma-of-connectiontoreptheory}
Let $M'$ and $M''$ be vector spaces of dimensions $d'$ and $d''$, respectively, and $\phi: M' \rightarrow M''$ be a linear map of maximal possible rank min$(d', d'')$. Let $e'$ and $e''$ be two integers such that such that $0 \leq e' \leq d'$ and $0 \leq e'' \leq d''$. Then the following conditions are equivalent:
\begin{enumerate}
    \item There exist subspaces $N' \subseteq M'$ and $N'' \subseteq M''$ such that dim$(N') = e'$, dim$(N'') = e''$, and $\phi(N') \subseteq N''$.
    \item $e' - e'' \leq [d' - d'']_+$.
\end{enumerate}

\end{lemma}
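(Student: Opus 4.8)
## Proof Plan for Lemma \ref{lemma-of-connectiontoreptheory}

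The plan is to prove this as a standard linear-algebra fact by exhibiting explicit subspaces in one direction and a dimension-counting obstruction in the other. Set $r = \min(d', d'')$, so that $\phi$ has rank $r$. I will denote $K = \ker \phi \subseteq M'$, which has dimension $d' - r$, and $I = \operatorname{im} \phi \subseteq M''$, which has dimension $r$.

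\textbf{Direction (1) $\Rightarrow$ (2).} First I would assume subspaces $N' \subseteq M'$, $N'' \subseteq M''$ exist with $\dim N' = e'$, $\dim N'' = e''$, and $\phi(N') \subseteq N''$. The key observation is that $\phi(N')$ is a subspace of $N''$, so $\dim \phi(N') \le e''$. On the other hand, $\phi(N') = N' / (N' \cap K)$ as vector spaces, so $\dim \phi(N') = e' - \dim(N' \cap K) \ge e' - \dim K = e' - (d' - r)$. Combining, $e' - (d' - r) \le e''$, i.e. $e' - e'' \le d' - r = d' - \min(d', d'')$. When $d' \le d''$ this reads $e' - e'' \le 0 \le [d' - d'']_+$, and when $d' > d''$ it reads $e' - e'' \le d' - d'' = [d' - d'']_+$; either way condition (2) holds.

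\textbf{Direction (2) $\Rightarrow$ (1).} Here I would construct the subspaces explicitly. The idea is to build $N'$ so that it uses up as much of $\ker\phi$ as possible before mapping anything nontrivially into $M''$. Concretely: if $e' \le d' - r = \dim K$, choose $N' \subseteq K$ of dimension $e'$ and $N''$ any subspace of $M''$ of dimension $e''$ (valid since $0 \le e'' \le d''$); then $\phi(N') = 0 \subseteq N''$, so we are done with no constraint needed. Otherwise $e' > d' - r$: let $s = e' - (d' - r) \ge 1$. I would choose $N'$ to contain all of $K$ together with an $s$-dimensional subspace $N_1'$ on which $\phi$ is injective (possible because $\phi$ restricted to any complement of $K$ is injective and that complement has dimension $r \ge s$ — this last inequality $s \le r$ is exactly where hypothesis (2) enters, since $s = e' - (d'-r) \le e'' + (r - d' + d') - \dots$; more directly $s = e' - d' + r \le e' - d' + r$ and using $e' \le d'$ gives $s \le r$). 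Then $\phi(N') = \phi(N_1')$ has dimension exactly $s$. The inequality from (2) guarantees $s = e' - e'' + (e'' - d' + r)$; I need $s \le e''$, which rearranges to $e' - e'' \le d' - r = [d'-d'']_+$, precisely condition (2). So $\phi(N')$ has dimension at most $e''$, and I can extend it to a subspace $N'' \subseteq M''$ of dimension exactly $e''$ (again using $e'' \le d''$). Then $\phi(N') \subseteq N''$ as required.

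\textbf{Main obstacle.} The proof is essentially routine; the one point requiring care is the bookkeeping in the (2) $\Rightarrow$ (1) direction — verifying that the dimension count $s = e' - (d'-r)$ simultaneously satisfies $s \le r$ (so that an $s$-dimensional subspace on which $\phi$ is injective exists inside a complement of $\ker\phi$) and $s \le e''$ (so that $\phi(N')$ fits inside a subspace of dimension $e''$). I expect to handle this by splitting into the two cases $d' \le d''$ and $d' > d''$ and checking the inequalities $0 \le e' \le d'$, $0 \le e'' \le d''$ together with $e' - e'' \le [d'-d'']_+$ in each case; this is purely arithmetic and poses no conceptual difficulty. I would also remark that this statement is exactly \cite{tran}, so in the paper itself it suffices to cite that reference, with the above sketch included only for completeness.
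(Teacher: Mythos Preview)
Your proposal is correct, and in fact the paper does not prove this lemma at all: it simply states it with the attribution ``proven in \cite{tran}'' and moves on. You correctly anticipated this in your final remark. Your linear-algebra argument is sound in both directions; the only cosmetic issue is that the parenthetical bookkeeping in the $(2)\Rightarrow(1)$ direction is a bit garbled (e.g.\ the tautology ``$s=e'-d'+r\le e'-d'+r$'' and the redundant rewriting ``$s=e'-e''+(e''-d'+r)$''), but the actual inequalities you need---$s\le r$ from $e'\le d'$, and $s\le e''$ from hypothesis~(2)---are verified correctly.
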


For a representation of quiver $Q = (Q_0, Q_1)$ with dimension vector $\underline{d}$, we determine when there is a subrepresentation with of a given dimension vector $\underline{e}$. We do this by adapting an argument in \cite{tran} relying on Lemma \ref{lemma-of-connectiontoreptheory} and Theorem \ref{connectiontoreptheory}.

\begin{thm} \label{thm:4.2Tran}
For vectors $\underline{d}$ and $\underline{e} = (e_1, \dots, e_n)$, the coefficient of the monomial $u_1^{e_1} u_2^{e_2} \cdots u_n^{e_n}$ in $F_{\underline{d}}$ is nonzero if and only if
\begin{enumerate}
    \item $0 \leq \underline{e} \leq \underline{d}$,
    \item all arrows in $Q$ are acceptable, and
    \item $\nu(S) \leq 1$ for all connected components $S$ of $C$.
\end{enumerate}
If all of the conditions above are satisfied, then the coefficient of $u_1^{e_1} u_2^{e_2} \cdots u_n^{e_n}$ is $2^c$, where $c$ is the number of connected components $S$ such that $\nu(S) = 0$. 
\end{thm}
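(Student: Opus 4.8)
\textbf{Proof proposal for Theorem \ref{thm:4.2Tran}.}

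The plan is to mirror Tran's argument from \cite{tran} for the acyclic case, adapting it to the once-punctured disk setting by carefully tracking the role of singular arrows. I would start from Theorem \ref{connectiontoreptheory}, which writes $F_{\underline{d}}$ as $\sum_{\underline{e}} \chi(Gr_{\underline{e}}(M)) \prod u_i^{e_i}$, so the coefficient of $u_1^{e_1}\cdots u_n^{e_n}$ is $\chi(Gr_{\underline{e}}(M))$. Since $M = M_\gamma$ is the indecomposable module attached to an arc $\gamma$, all of its structure maps $\varphi_\alpha$ have maximal rank $\min(d_{s(\alpha)}, d_{t(\alpha)})$ except for the singular arrows, which are identically zero by construction (and for a singular arrow $i \to j$ one has $d_i = d_j = 1$, so the zero map automatically satisfies the ``subspace containment'' constraint $\phi(N') \subseteq N''$ for \emph{any} choice of $N', N''$). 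Thus, by Lemma \ref{lemma-of-connectiontoreptheory} applied arrow-by-arrow, a tuple of subspaces $N_i \subseteq M_i$ with $\dim N_i = e_i$ that is compatible with all non-singular arrows exists if and only if $e_j - e_k \leq [d_j - d_k]_+$ for every non-singular arrow $j \to k$, which is exactly condition (2) (the acceptability condition; note that singular arrows impose no constraint, matching Definition \ref{submod-indexing}). So $Gr_{\underline{e}}(M)$ is nonempty iff conditions (1) and (2) hold.

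The next step is to compute $\chi(Gr_{\underline{e}}(M))$ when it is nonempty, and to show it vanishes precisely when condition (3) fails. Here I would stratify $Gr_{\underline{e}}(M)$ according to the combinatorial ``type'' of the subrepresentation, i.e., for each vertex $i$ with $(d_i, e_i) = (2,1)$ we record which line $N_i \subseteq M_i \cong \Bbbk^2$ is chosen; at all other vertices $N_i$ is forced (it is $0$ if $e_i = 0$ or $M_i$ if $e_i = d_i$ with $d_i \le 1$ or $d_i = e_i = 2$, and a unique line when $(d_i,e_i)=(2,1)$ is forced by an incident acceptable non-singular arrow to or from a vertex that pins it down). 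The only genuine moduli come from the set $C = \{i : (d_i,e_i) = (2,1)\}$, and a local analysis of a connected component $S$ of $C$ shows: if $\nu(S) = 0$, the compatible choices of lines at the vertices of $S$ — propagated along the non-singular arrows within and out of $S$ — are parametrized by a single $\mathbb{P}^1$ (all the vertices of $S$ must carry ``the same'' line, via the identity-type maps along the tree $Q^{\supp_2}$ from Lemma \ref{lemma:d_vector_2}), contributing a factor $\chi(\mathbb{P}^1) = 2$; if $\nu(S) = 1$, the single critical arrow rigidifies the choice to a point, contributing a factor $1$; and if $\nu(S) \geq 2$, the two incompatible rigidifications force $Gr_{\underline{e}}(M)$ to be empty, giving coefficient $0$. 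Multiplying over connected components of $C$ yields the coefficient $2^c$ with $c = \#\{S : \nu(S) = 0\}$, which is exactly the claimed formula. For the genuinely cyclic situations — a component $S$ of $C$ that is a single vertex sitting on a $3$-cycle in a type $A_m$ piece, or on the central cycle in type IV — one checks the same $\mathbb{P}^1$-versus-point dichotomy still holds, using that $Q^{\supp_2}$ is a tree (so $S$ is a single vertex whenever $\underline{d}$ has a $2$), and that singular arrows out of that vertex impose no rigidity.

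The main obstacle I anticipate is the Euler-characteristic computation of the strata: a priori $Gr_{\underline{e}}(M)$ could be a nontrivial variety whose Euler characteristic is not simply a product of $\chi(\mathbb{P}^1)$'s and points, and one needs to argue that the stratification by line-choices at vertices of $C$ is by affine-space bundles (or at least that the non-$C$ directions contribute trivially to $\chi$), so that $\chi$ is additive/multiplicative in the expected way. I would handle this by showing that once the lines at $C$ are fixed compatibly, the remaining data (the choice of $N_i$ at vertices with $e_i = d_i$ or $e_i = 0$) is unique, so each stratum is just the product over components $S$ of $C$ of either a point or $\mathbb{P}^1$ minus the ``bad'' loci — and since $\chi(\mathbb{P}^1) = 2$ while $\chi(\mathrm{point}) = 1$ and $\chi$ is multiplicative over products and additive over the finite stratification, the count goes through. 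A secondary subtlety is verifying that critical arrows as defined in Definition \ref{defn:acceptablecritical} are exactly the arrows that ``pin a line'': for $j \to k$ with $(d_j,e_j)=(2,1)$ and $(d_k,e_k)=(1,0)$ the condition $\varphi_\alpha(N_j) \subseteq N_k = 0$ together with $\mathrm{rank}\,\varphi_\alpha = 1$ forces $N_j = \ker \varphi_\alpha$, a single line; dually for the other critical type; and two such constraints on the same line are generically incompatible, giving emptiness when $\nu(S) \ge 2$. This dictionary, combined with the bijection to submodule-indexing vectors already established (Theorems \ref{thm:bijection}, \ref{thm:EtoD}, \ref{thm:Dtoe}), and with Theorem \ref{thm:main}'s interpretation of $c$ as the number of cycles in the mixed dimer configuration, closes the loop.
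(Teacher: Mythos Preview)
Your proposal is correct and follows essentially the same route as the paper's proof: both arguments invoke Theorem \ref{connectiontoreptheory} and Lemma \ref{lemma-of-connectiontoreptheory} to reduce to conditions (1)--(2), then analyze $Gr_{\underline{e}}(M)$ by observing that the only moduli come from the connected components $S$ of $C$, each contributing a $\mathbb{P}^1$, a point, or the empty set according to whether $\nu(S)$ is $0$, $1$, or $\geq 2$. The paper is slightly more concrete (fixing a linear indexing of $\underline{d}$ and naming the three distinguished lines $V_{\{r-1,r\}}, V_{\{m,m-2\}}, V_{\{m-1,m-2\}}$ explicitly), while you are more careful about justifying multiplicativity of $\chi$ across components; your closing appeal to Theorems \ref{thm:bijection}--\ref{thm:Dtoe} is unnecessary here, since the paper's proof of Theorem \ref{thm:4.2Tran} is purely representation-theoretic and is used as an \emph{input} to those results rather than the reverse.
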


\begin{proof}
We may restrict attention to $\underline{e} \in \mathbb{Z}^n$ which satisfy the first two conditions of Definition \ref{submod-indexing}. Otherwise, ignoring singular arrows, Theorem \ref{connectiontoreptheory} and Lemma  \ref{lemma-of-connectiontoreptheory} imply that the coefficient of $u^{\underline{e}}$ in $F_{\underline{d}}$ is 0. If $\underline{d} \in \{ 0,1 \}^n$, then Theorem \ref{lemma-of-connectiontoreptheory} follows from Theorem 7.4 of \cite{tran} and \cite{dwz} in the acyclic case. The non-acyclic case reduces to the acyclic proof because the support of quiver will be acyclic when $\underline{d} \in \{ 0,1 \}^n$.\allowbreak
\vspace{1em}

If $d_i \geq 2$ for some $i$, then we show that $\underline{e}$ indexes a subrepresentation if and only if $\underline{e} \leq \underline{d}$ satisfies the criticality condition. Throughout this proof, we only consider arrows in $Q_1$ which are not singular. Note that $Q_1$ can only contain singular arrows when there are some $d_i \geq 2$. By the surface model, and more specifically using Lemma \ref{lemma:d_vector_2}, we can index $\underline{d}$ such that
\begin{align*}
\underline{d} = \underline{e}_p +  \cdots + \underline{e}_{r-1} + 2 \underline{e}_r + \cdots + 2 \underline{e}_{m-2} + \underline{e}_{m-1} + \underline{e}_m,
\end{align*}
for some $1 \leq p < r \leq m-2$, where $\underline{e}_i$ is the standard basis vector with $i^\text{th}$ entry 1 and all other entries 0.\allowbreak
\vspace{1em}

An indecomposable representation $M = (M_i)$ with dimension vector $\underline{d}$ can be selected in which we insist that the maps between $M_i$ and $M_{j}$ where $d_i = d_j$ are identity maps. We denote maps in the representation as $\varphi_{i,j}$ where $i$ denotes the index of the source and $j$ the index of the taret of the map.\allowbreak
\vspace{1em}

To compute $\chi(Gr_{\underline{e}}(M))$, we will construct all possible subrepresentations with dimension vector $\underline{e}$. When a representation $N = (N_i)$ has a dimension vector with $\underline{e} \leq \underline{d}$, the condition that each $N_i$ is a subspace of $M_i$ is satisfied. Therefore, to check that $N$ is a subrepresentation of $M$, it suffices to check that 
\begin{equation}
    \varphi_{i,j}(N_i) \subseteq N_j \text{ if } i \rightarrow j \text{ , and  } \varphi_{j,i}(N_j) \subseteq N_i \text{ if } j \rightarrow i
\end{equation} 
\noindent for all $i$ and $j$ adjacent in $Q$.\allowbreak 
\vspace{1em}
 
Recall from Definition \ref{defn:acceptablecritical} that $C = \{ i \in Q_0 : (d_i, e_i) = (2,1) \}$. Then for any $i \notin C$, there is only one possible subspace of $M_i$ of dimension $e_i$. For a connected component $S$ of $C$, if $i,j$ are vertices in $S$ and $i \rightarrow j$, then condition (1) above together with the fact that $\underline{e} \leq \underline{d}$ imply that $N_i \cong N_j$. Therefore, when a subspace of dimension 1 is chosen for one vertex of the component $S$, all vertices in that component must be assigned the same subspace. \allowbreak
\vspace{1em}

If $p \leq i,j \leq n$ with $i \rightarrow j \in Q_1$ are such that at least one of $i,j$ are not in $C$ and greater than or equal to $r$ and less than or equal to $m-2$, then it is straightforward to check that $\varphi_{i,j}(N_i) \subseteq N_j$. For example, if $r \leq i \leq m-2$ and $i \notin C$, then by definition of $C$, since $d_i = 2$, we have that $e_i = 0$ or $e_i = 2$. If $e_i = 0$, then $N_i = 0$ so $\varphi_{i,j}(N_i) = 0$. If $e_i = 2$, since $i \rightarrow j$ is acceptable, by Lemma \ref{lemma-of-connectiontoreptheory}, we have that $e_j = d_j$, so $N_j = M_j$, which contains $\varphi_{i,j}(N_i)$. Therefore, it only remains to show that the property (1) holds for $(i,j) = (r-1, r)$ if $r \in C$, and that the property holds for $(i,j) = (m-2, m-1), (m-2,m)$ if $m-2 \in C$. \allowbreak
\vspace{1em}

We will consider which one-dimensional subspaces can be assigned to each component. To do this, we construct three distinct one-dimensional subspaces of the 2-dimensional subspace $M_{m-2}$. For one-dimensional $M_j$ and 2-dimensional $M_k$ with  $j \rightarrow k \in Q_1$, let $V_{\{j,k\}} = Im(\varphi_{j,k})$. In the case that $k \rightarrow j \in Q_1$, let $V_{\{j,k\}} = Ker(\varphi_{k,j})$. Then $V_{\{r-1,r\}}$, $V_{\{m, m-2\}}$, and $V_{\{m-1, m-2\} }$ are three distinct one-dimensional subspaces of $M_{m-2}$.\allowbreak \vspace{1em}

We call a pair of vertices $(i,j)$ a critical pair when the arrow between them in $Q_1$ is a critical arrow. Note that when $(i,j) = (r-1, r)$ is a critical pair, property (1) is satisfied for $N_{r-1}$ and $N_r$ if and only if $N_r = V_{\{r-1, r\}}$.  When $(i,j) = (m-2, m)$ is a critical pair, property (1) is satisfied if and only if $N_{m-2} = V_{\{m, m-2\}}$. Analogously, when $(i,j) = (m-2, m-1)$, (1) is satisfied if and only if $N_{m-2} = V_{\{m-1, m-2\}}$.\allowbreak
\vspace{1em}

Let $S$ again be a connected component of $C$. Recall that once a subspace of dimension 1 is chosen for one vertex of $S$, all vertices must of $S$ must be assigned the same subspace. We consider cases for the number of critical arrows in $S$.  If $\nu(S) = 0$, then any one-dimensional subspace of $\Bbbk^2$ may be assigned to all of the vertices of $S$. This corresponds to $\mathbb{P}^1$, so $\chi(\mathbb{P}^1) = 1$. Next suppose that $\nu(S) = 1$. If $(m-2, m)$ is a critical pair and $m-2 \in S$, then the chosen subspace for $S$ must be $V_m$. Similarly, if $(m-2, m-1)$ is a critical pair and $m-2, \in S$, then the chosen subspace assigned to the vertices of $S$ must be $V_{m-1}$. Since there is no choice of subspace, the Euler characteristic is 1. Finally, if $\nu(S) \geq 2$, there is no one-dimensional subspace which can be assigned to the vertices of $S$ which will satisfy condition (1). This implies that the Euler characteristic is 0. 
\end{proof}

With this, the only thing that remains to show to prove Theorem \ref{thm:main} is to verify that the number of cycles on a mixed dimer configuration $D$ associated to a submodule-indexing vector $\underline{e}$ is given by the number of connected components $S$ of $C$ such that with $\nu(S) = 0$. 

\begin{proof}
First note that we cannot have a cycle on a tile that is a dimer configuration, i.e. we need $d_i = 2$ in order for $i$ to potentially be enclosed by a cycle. Also, note that there are no cycles in $M_-$ by the convention of the black and white coloring in the definition of $M_-$, so in order for tile $i$ to be enclosed by a cycle, we must have that $e_i \geq 1$. This tells us that the set of all tiles that can potentially be enclosed by a cycle are contained within $S = \{i \in Q~:~ (d_i,e_i) = (2,1)\}$.\allowbreak
\vspace{1em}
     
Fix $D$ associated to $(\underline{d}, \underline{e})$. Let $C$ be a connected component of $S$. Suppose that $\nu(C) \neq 0$, that is, there is some $c \in C$ such that $c$ is the vertex of a critical arrow. Namely, this $c$ must be connected to a tile outside of $S$ by definition of the $d,e$ coordinates of the other vertex of the critical arrow. Let $c'$ be the other vertex of the critical arrow involving $c$. Note that if $c \to c'$ about the edge $\alpha$, then $(d_{c'}, e_{c'}) = (1,0)$ and we have that $n_{c,c'} = \max(d_c-d_{c'},0) + (e_{c'}-e_c) = 1-1=0$. If $c' \to c$ about the edge $\alpha$, then $(d_{c'}, e_{c'}) = (1,1)$ and we have that $n_{c',c} = \max(d_c'-d_{c},0) + (e_{c}-e_{c'}) = 0$. Therefore, by Lemma \ref{lemma:MijNij}, the edge $\alpha$ cannot have an edge from $D$ which tells us that no cycle in $D$ can enclose tile $c$. This tells us that if a cycle encloses any tile or collection of tiles in $C$, then $\nu(C) = 0$.\allowbreak
\vspace{1em}
 
Now, suppose that $C$ is a connected component of $S$ with $\nu(C) = 0$, i.e. no critical arrows involve any vertex $c$ from $C$. Suppose that $C$ is comprised of the tiles $i, \dots, m$ where $i$ is the minimal tile in $C$ and $i'$ is the maximal tile in $C$ with respect to the indexing in $Q$. We aim to show that there exists a cycle in $D$ enclosing all of $C$. Note that for $i \leq j \leq i'-1$, we have that $(d_j,e_j) = (2,1)$, so there are no edges in $D$ are on the edge straddling tiles $j$ and $j+1$ by Lemma \ref{lemma:MijNij} since $n_{j,j+1} = 0$. This implies that if we can show that all the boundary edges of $C$ are in $D$, that there is a cycle enclosing all of $C$ in $D$. \allowbreak
\vspace{1em}

Note that as $e_j =1$ for all $i \leq j \leq i'$, we have that each of the tiles in $C$ have been flipped exactly once from $M_-$ using our bijection of adding 1's to the $\underline{e}$-vector coinciding with flipping the corresponding tile from Theorem \ref{thm:Dtoe}. Note that if a boundary edge $\alpha$ on tile $t$ is oriented black to white clockwise with respect to $G_2$, then by definition of $M_-$, $\alpha$ must have two edges distinguished in $M_-$. After one flip at tile $t$, $w(\alpha)$ decreases by 1. This gives that this edge now appears distinguished as a single edge in $D$. On the other hand, if $\alpha$ is oriented white to black clockwise with respect to $G_2$, then by definition of $M_-$, $\alpha$ has no edges distinguished in $M_-$. After performing a flip on tile $t$, we have that $w(\alpha)$ increases by 1. This gives that $\alpha$ now has exactly one edge distinguished in $D$. Hence, all the boundary edges on $C$ have exactly one edge on them in $D$.\allowbreak
\vspace{1em}

Now, we need to show that the cycle closes up on the interior edges of $G$, i.e. if $i > 0$ and/or $i' < m-1$, we need to verify that the edges straddling tiles $i,i-1$ and $i',i'+1$ have one edge distinguished in $D$. Note that as $i,i' \in C$ with $\nu(C) = 0$, no arrow involving $i$ or $i'$ is critical. By the structure of $\underline{d} \in \Phi_+$, we have that $d_{i-1} \geq 1$ and $d_{m+1} \geq 1$. Seeking a contradiction, suppose that $n_{i, i-1} = 0$. Then, $\max(d_i-d_{i-1},0) = e_i-e_{i-1}$, i.e. $\max(2-d_{i-1},0) = 1-e_{i-1}$. Since $e_{i-1},d_{i-1} \in \{0,1,2\}$, this only occurs if either $d_{i-1} = 1$ giving that $e_{i-1} = 0$ or if $d_{i-1} = 2$ giving that $e_{i-1} = 1$. In the first case, this gives that the arrow $i \to i-1$ is critical which contradicts that $\nu(C) = 0$. In the second case, $i-1 \in C$ which contradicts the minimality of $i$. Hence, $n_{i,i-1} >0$ and by Lemma \ref{thm:Dtoe}, this means there must be an edge straddling tiles $i, i-1$ in $D$. Note that this same argument holds for showing that there must be an edge in $D$ straddling tiles $i',i'+1$. Therefore, we see that one cycle is formed around all of $C$ when $\nu(C) = 0$. Hence, the number of cycles in $D$ is exactly the number of connected components $C$ of $S$ with $\nu(C) = 0$. 
\end{proof}

Therefore, we have shown that the $F$-polynomial is indeed given by the mixed dimer configuration generating function given in Theorem \ref{thm:main}.

\begin{subsection}{$g$-Vectors}
\label{subsection:gvectors}
As detailed in \cite{ca4}, see Definition \ref{defn:fpoly} above, a cluster variable can be reconstructed from an $F$-polynomial, but only if it is also accompanied by the extra data of a $g$-vector.
In our prequel to this paper \cite{prequel}, we were able to define a weighting scheme on the edges of the base graph and then show that the $g$-vector can be realized using weight of the minimal mixed dimer configuration in the case of an acyclic quiver of type $D_n$. We conjecture that a similar weighting scheme exists in the non-acyclic case, which would lead to a similar interpretation for $g$-vectors, in order to have a full dimer theoretic interpretation of the Laurent expansions for these cluster variables.

\begin{conj}
The $g$-vector associated to $Q$ and $\underline{d}$-vector, denoted $\underline{g} = \underline{g}(Q, \underline{d})$, is given by 

$$\underline{g} = \deg\left(\frac{\text{wt}(M_-)}{\underline{x}^{\underline{d}}}\right)$$

\noindent where $\deg(x_0^{a_0}x_1^{a_1} \cdots x_{n-1}^{a_{n-1}}) := (a_0,a_1, \dots, a_{n-1})$ and $\text{wt}(M_-)$ is some weighting scheme given on the base graph.
\end{conj}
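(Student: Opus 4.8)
The statement is a conjecture, so what follows is a proposed route rather than a reconstruction of an existing proof. The first — and genuinely creative — task is to pin down the weighting scheme $\text{wt}$, which the conjecture deliberately leaves unspecified; once it is chosen correctly everything else should be bookkeeping built on Theorem \ref{thm:main} and Theorem \ref{thm:bijection}. The natural candidate is to extend the acyclic edge-weighting of \cite{prequel}: assign to each edge $e$ of the base graph $G$ a Laurent monomial $\text{wt}(e)$ in $x_1,\dots,x_n,y_1,\dots,y_n$, reading the $x$-part off the cluster variables attached to the two tiles that $e$ straddles and the $y$-part off the principal-coefficient data, exactly as in the acyclic model. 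There are two new features to handle: the edges $e_{i,j}\in Z$ coming from singular arrows, and the $k$ spokes emanating from the $k$-star attached to a fully supported oriented cycle (Remark \ref{rmk:n-star}). A singular arrow contributes no power of a cluster variable to the exchange relation — the corresponding $2$-path vanishes in the Jacobian algebra — so its edge must be given a weight that is ``neutral,'' and the weight of a $k$-star spoke must be chosen so that a flip at any incident tile still behaves correctly. The guiding principle for all these choices is the requirement of Step~1 below.

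\textbf{Step 1 (flip covariance).} Prove that whenever $D'$ is obtained from $D$ by flipping a tile $i$ (moving up in the poset $P$), one has $\text{wt}(D') = \hat{y}_i\cdot \text{wt}(D)$, where $\hat{y}_i = x_{n+i}\prod_j x_j^{\#\{i\to j\in Q_1\}}$. This is a purely local computation on the $2k$-gon tile $i$ together with its neighbors: one checks that the product of the weights of the ``black-to-white clockwise'' edges on tile $i$, divided by the product of the weights of the ``white-to-black clockwise'' edges, equals $\hat{y}_i$. Since by Theorem \ref{thm:main} and Theorem \ref{thm:bijection} every $D\in P$ is reached from $M_-$ by flipping tile $i$ exactly $t_i$ times, iterating gives $\text{wt}(D) = \text{wt}(M_-)\prod_i \hat{y}_i^{t_i}$.

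\textbf{Step 2 (recovering the cluster variable) and Step 3 (reading off $\underline{g}$).} Combining Step~1 with the generating-function formula of Theorem \ref{thm:main} yields
$$\frac{1}{\underline{x}^{\underline{d}}} \sum_{D \in P} 2^{c}\, \text{wt}(D) \;=\; \frac{\text{wt}(M_-)}{\underline{x}^{\underline{d}}}\; F_{\underline{d}}(\hat{y}_1,\dots,\hat{y}_n),$$
where $c$ is the number of cycles in $D$. One must then identify the left-hand side with the cluster variable $x_\gamma$ itself; here I would either invoke the snake-graph expansion of $x_\gamma$ from the surface literature and match it term-by-term with the mixed dimer model (using the bijection of Theorem \ref{thm:bijection} to organize the correspondence), or verify directly that the left-hand side is the Laurent polynomial solving the relevant exchange relations, which again reduces to the acyclic case plus the singular-arrow and $k$-star analysis. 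Granting this, compare with the defining identity $x_\gamma = F_{\underline{d}}(\hat{y}_1,\dots,\hat{y}_n)\,\underline{x}^{\underline{g}}$ of Definition \ref{defn:fpoly}: since $F_{\underline{d}}$ is primitive with constant term $1$, the two expressions force $\underline{x}^{\underline{g}} = \text{wt}(M_-)/\underline{x}^{\underline{d}}$, i.e. $\underline{g} = \deg\!\big(\text{wt}(M_-)/\underline{x}^{\underline{d}}\big)$, which is the conjecture.

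The main obstacle is Step~1 together with the choice of weighting in the first paragraph, specifically the edges of $Z$ and the spokes of the $k$-star. In the acyclic model every edge straddles two genuine arrows and flip covariance is transparent; here a single consistent assignment must simultaneously respect flips of both tiles adjacent to a singular edge, and the $k$-star vertex is incident to $k$ edges at once, so a flip there interacts with several tiles of the central cycle. Checking that one assignment works uniformly across all of Vatne's types I--IV — in particular type IV, where the entire central cycle is fully supported by $1$'s whenever $\underline{d}$ contains a $2$ — is where the real work lies; once that is settled, Steps~2 and~3 are essentially formal consequences of Theorem \ref{thm:main} and Definition \ref{defn:fpoly}.
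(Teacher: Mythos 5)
This statement is a conjecture, and the paper explicitly leaves it unproved; the authors' closing remarks in Section \ref{subsection:gvectors} say only that the weighting scheme would have to differ from \cite{prequel} to accommodate singular arrows, that the edge weights ``may have to be allowed to be more complicated terms than just a single $x_i$,'' and that a proof would require extending Tran's classification of $g$-vectors to the cyclic case. So there is no ``paper proof'' to compare against, and your proposal should be judged as a plan of attack rather than as a reconstruction.

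As a plan, it is reasonable but diverges from the route the authors sketch, and it has two soft spots worth naming. First, you fix from the outset that $\text{wt}(e)$ should be a Laurent monomial read off the tiles that $e$ straddles, with singular-arrow edges in $Z$ given a ``neutral'' weight; the paper's own preliminary computations led the authors to suspect that monomials will not suffice, so the most creative part of the problem --- designing $\text{wt}$ --- is arguably being prejudged in a way the authors already tried and found wanting. Second, your Step~2 (identifying $\underline{x}^{-\underline{d}}\sum_{D\in P} 2^c\,\text{wt}(D)$ with $x_\gamma$) is doing all the remaining work and is only gestured at: there is no off-the-shelf snake-graph expansion for mixed single/double dimers on once-punctured surfaces to ``match term-by-term,'' and verifying the exchange relations directly for every flip across all of Vatne's types I--IV is exactly the sort of case analysis the authors deferred. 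Your Step~1 (flip covariance producing $\text{wt}(D) = \text{wt}(M_-)\prod_i \hat y_i^{t_i}$) is the cleaner and more promising observation; if it can be made to hold for some weighting, then Step~3 (reading off $\underline g$ from primitivity of $F_{\underline d}$ via Definition \ref{defn:fpoly}) is indeed formal. By contrast, the route the authors anticipate is to bypass the exchange-relation verification entirely by extending Tran's explicit $g$-vector formulas and then checking that $\deg(\text{wt}(M_-)/\underline{x}^{\underline d})$ reproduces them. That approach avoids your hardest step but requires the representation-theoretic classification you do not use; your approach, if the weighting can be found, would be more self-contained and would simultaneously reprove the $F$-polynomial result. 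Neither is established, and the conjecture remains open.
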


Based on preliminary computations, it appears that this weighting scheme must be different than the one in \cite{prequel} to account for singular arrows. We also conjecture that the edge weights may have to be allowed to be more complicated terms than just a single $x_i$, which differs from the acyclic case. The proof of our result for $g$-vectors in the acyclic case also relies on a classification of $g$-vectors found in \cite{tran}, that we would have to extend to the cyclic case.
\end{subsection}
\end{section}

\begin{appendix}
\begin{section}{Classification of $\underline{d}$-vectors} \label{section:d-vectors}
In this appendix, we classify all crossing vectors, $\underline{d}$-vectors, in all non-acyclic type $D_n$ cluster algebras. To accomplish this, we rely on the surface model for type $D_n$ cluster algebras - a once-punctured disk with $n$ marked points. We split our work into four types following Vatne's classification of type $D_n$ quivers \cite{vatne}. In each type, we rely on the computing the crossing vector associated to an arc $\gamma$ in a triangulation of the once-punctured disk with $n$ marked points by keeping track of the arcs that $\gamma$ crosses in the given triangulation \cite{fst}.\allowbreak
\vspace{1em}

If $\gamma$ crosses any arc in a triangulation twice, we streamline the Vatne categorization to make our catalog more concise. To this end, we describe the precise situations where this can occur. Any arc of the triangulation that is crossed twice by $\gamma$ must be a peripheral arc i.e. not incident to the puncture. Moreover, if we follow the path of $\gamma$, out of all the peripheral arcs that are crossed twice, there is a unique peripheral arc that is closest to the puncture, up to isotopy.  Without loss of generality, we let $c$ denote this arc. This arc splits the triangulation into two parts. On one side of $c$, the sub-triangulation is that of a unpunctured polygon i.e. has corresponding sub-quiver $Q'$ is of type $A_m$ with vertex $c$ is one of its endpoints.  On the other side of $c$, the sub-triangulation is that of a punctured bi-gon, with arcs $a$ and $b$ incident to the puncture such that the only peripheral arc that $\gamma$ crosses is $c$ (following Vatne's labeling of vertices in Figure \ref{fig:vatnetypes} that we repeat here for convenience). Arc $\gamma$ must cross arcs $a$ and $b$ along its path, and hence the corresponding sub-quiver consists of a fork of $a$ and $b$ attached to $c$, with any of the four orientations on the edges $a - c$ and $b - c$ depending on the four possible triangulations of the bigon.\allowbreak
\vspace{1em}

\begin{figure}[H]
    \centering
    \includegraphics[scale=.25]{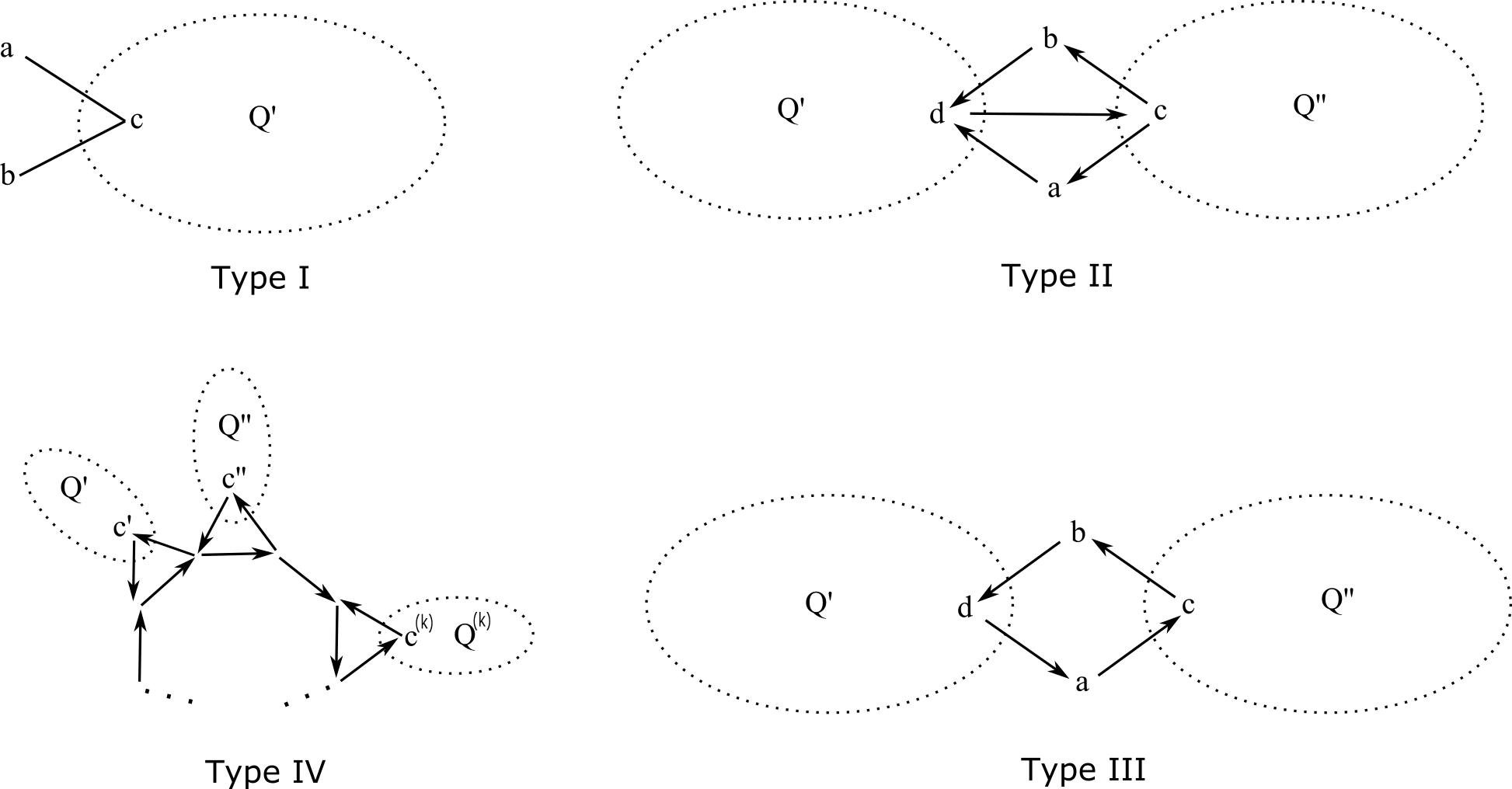}
    \caption{ }
\end{figure}

With these observations above, the types II and III surfaces degenerate to the type I surface. Moreover, arc $\gamma$ must cross at most three other arcs, each of which corresponds to a vertex of the quiver. Either the vertices $a,c,d$ and not $b$ or the vertices $b,c,d$ and not $a$, which implies the type $D_n$ parts of both of these surfaces degenerate to the fork in the type $D_n$ part of the type I surface. \allowbreak
\vspace{1em}

Furthermore, we can streamline these types of surfaces even more so. Observe that if an arc $\gamma$ is supported on some subset of $a,b,c$, but not $d$ or some subset of $a,b,d$, but not $c$, we can treat the arcs $d,c$ respectively as boundary arcs and the arc can be considered a type I arc.  \allowbreak
\vspace{1em}

The last reduction we make reduces case work for the type A parts of the surfaces. Namely, we rely on sectioning off the triangulations of our once-punctured disk into its type $A_m$ parts. In our catalog, we focus on the general structure of the $\underline{d}$-vectors and leave space for all potential type $A_m$ crossing vectors that correspond to triangulations of polygons.\allowbreak
\vspace{1em}

With those reductions, we begin by cataloging the type I surface. There are nuances between the orientation of the arrows between $a,b,c$ with what the surface model looks like; however, there are six families of arcs that are produced in any type I case. 

\begin{lemma}\label{lemma:typeIarcs}
Consider any orientation of the quiver corresponding to the type I surface. Suppose that first three coordinates of the $\underline{d}$-vector are given by $a,b,c$ respectively. The fourth coordinate is the first coordinate in the type $A_m$ part of the surface. Then, there are six families:

$$(0,0,0, \text{type }A)~~(0,0,1, \text{type }A)~~(1,1,1, \text{type }A)$$
$$(0,1,1, \text{type }A)~~(1,0,1, \text{type }A)~~(1,1,2, \text{type }A)$$
\end{lemma}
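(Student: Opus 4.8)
The plan is to argue entirely inside the surface model, leaning on the reductions set up in the paragraphs just above the lemma. Fix an ideal triangulation $T$ of the once-punctured disk whose quiver is of type I. After those reductions the relevant part of $T$ is a \emph{fork}: two arcs $a,b$ incident to the puncture $P$, attached to a peripheral arc $c$ that separates a once-punctured bigon $R_P$ (whose internal arcs are $a$ and $b$) from an unpunctured polygon carrying a type $A_m$ sub-triangulation; the four orientations of the edges $a$--$c$ and $b$--$c$ permitted in Theorem \ref{thm:vatne} are exactly the four (tagged) triangulations of $R_P$. For an arbitrary arc $\gamma\notin T$ (plain, or tagged at $P$) the goal is to show that $(d_a,d_b,d_c)$ is one of $(0,0,0),(0,0,1),(1,1,1),(0,1,1),(1,0,1),(1,1,2)$ and that the remaining entries of the crossing vector range over all type $A_m$ crossing vectors.

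First I would extract the constraints that bound the first three coordinates. As $a$ and $b$ are incident to $P$ they are not peripheral, hence are each crossed at most once; and $c$, being the innermost twice-crossed peripheral arc, is crossed at most twice, so $d_a,d_b\in\{0,1\}$ and $d_c\in\{0,1,2\}$. Next, since $a$ and $b$ lie in the interior of $R_P$ and the only essential way into $R_P$ from the polygon is across $c$ (a crossing of $\gamma$ with $a$ or $b$ that occurs near one of the two vertices of $R_P$ shared with the polygon can be removed by isotopy), we get $d_a>0$ or $d_b>0\implies d_c>0$; equivalently the support of $(d_a,d_b,d_c)$ is one of $\emptyset,\{c\},\{a,c\},\{b,c\},\{a,b,c\}$. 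For a tagged $\gamma$ this is applied to its loop substitute, whose ordinary crossing numbers equal the tagged crossing numbers of $\gamma$ by Definition \ref{defn:taggedcrossingvector}.

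Then I would split on $d_c$. If $d_c=0$ the support constraint forces $(d_a,d_b,d_c)=(0,0,0)$ and $\gamma$ lies in the polygon, giving the family $(0,0,0,\text{type }A)$. If $d_c=2$, then both ends of $\gamma$ lie on the polygon side and its sub-arc inside $R_P$ enters across $c$, winds once around $P$, and leaves across $c$; since $a$ and $b$ cannot be crossed twice, such a sub-arc must meet each of $a$ and $b$ exactly once, forcing $(1,1,2)$. If $d_c=1$ the sub-arc of $\gamma$ inside $R_P$ necessarily runs from $c$ to $P$ (ending at a shared vertex would make that $c$-crossing removable), and a finite inspection over the triangulations of $R_P$ shows that the only triples $(d_a,d_b,d_c)$ that arise are $(0,0,1),(1,0,1),(0,1,1),(1,1,1)$. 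In each case the remainder of $\gamma$ (the portion on the polygon side of $c$) is a curve in a polygon triangulation, so its crossing numbers with the $A_m$-arcs form a type $A_m$ crossing vector; conversely each of the six patterns is realized, the type $A$ part being filled in by the classical picture for polygons. Assembling these cases yields the six families.

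The crux is the finite inspection underlying the $d_c=1$ (and, to a lesser extent, $d_c=2$) case: one has to enumerate, exhaustively and without duplication, the isotopy classes of how $\gamma$ can pass through $R_P$ in each of its four triangulations, keeping the plain-versus-tagged bookkeeping consistent (resolving self-folded triangles into tagged triangles as in Figure \ref{fig:idealvstagged}), and confirm both that every triple on the list occurs and that nothing with support $\{a\}$, $\{b\}$, or $\{a,b\}$, or with a $2$ outside the $c$-slot, slips through; this last point is precisely what the choice of $c$ as the innermost twice-crossed peripheral arc, together with the reductions above, is designed to guarantee. The dependence on the orientation of the fork is mild: reversing an edge of the fork merely swaps the roles of $a$ and $b$ (and of the associated triangulations of $R_P$), leaving the unordered list of six families unchanged, so it suffices to analyze one orientation in detail.
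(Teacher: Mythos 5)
Your approach follows the same route as the paper --- reduce to the fork picture and enumerate isotopy classes of arcs passing through the punctured bigon $R_P$ --- and the support/case-split scaffolding is a cleaner organization of the same finite inspection. But the key constraint you extract, namely that $d_a>0$ or $d_b>0$ forces $d_c>0$, is false for arcs confined to $R_P$, and the counterexample is one the paper's own case analysis records. When the orientation is such that $T$ contains two plain radii $a,b$ (the paper's Type IB, arrows $a\to c$ and $b\to c$), the tagged radii $a^{\Bowtie}$ and $b^{\Bowtie}$ are arcs not in $T$; the loop around the puncture used in Definition~\ref{defn:taggedcrossingvector} to compute their tagged crossings stays entirely inside $R_P$, never meeting $c$, yet crosses the opposite radius exactly once. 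Their crossing vectors are therefore $(0,1,0,\dots,0)$ and $(1,0,0,\dots,0)$, with support $\{b\}$ and $\{a\}$ respectively, which your constraint forbids. Appealing to the loop substitute does not rescue the step: the loop is precisely the curve that lives in $R_P$ and avoids $c$, so it violates rather than satisfies the claimed implication. The paper's Type~IB subsection lists these two $\underline{d}$-vectors explicitly as additional ones beyond its Families I--IV, so your proof needs to detect and dispose of them (whether they ought to appear as degenerate seventh and eighth families in the lemma statement is a separate looseness in the paper itself).

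There is also a smaller imprecision. In the $d_c=1$ case you assert the sub-arc inside $R_P$ ``necessarily runs from $c$ to $P$,'' but it may instead cross $c$, wind around the puncture, and exit at an endpoint of $c$; these are exactly the paper's Families~II and III (arcs emanating from the marked points $x$ or $y$ and wrapping around the puncture). The resulting triples $(d_a,d_b,d_c)$ do land in your list, so the conclusion survives, but the inspection should enumerate these winding arcs rather than dismiss the corresponding $c$-crossings as removable by isotopy --- they are not.
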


In order to see the proof of this lemma, we systemically go through the possibilities for arcs when the arrows between $a$ and $c$ and $b$ and $c$ are pointed in the same direction and when they are pointing in opposite directions. 

\begin{subsubsection}{Type IA}\label{subsection: type1a}
Let type IA be the orientation of the type I quiver where $a \to c$ and $c \to b$. We begin by categorizing the families of arcs that do not cross any arc in a triangulation twice. Namely, we analyze the possibilities for the support of $a,b,c$.\\

\noindent \textbf{Family I.} Any arc that does not cross the type $D_n$ part of the surface i.e. the arcs $a,b,c$ will be of the form:  $\underline{d} = (0,0,0, \text{type }A_m)$.\\

\noindent \textbf{Family II.} Arcs that emanate from the marked point $y$ and wrap clockwise around the puncture: $\underline{d} = (0,0,1, \text{type }A)$.\\

\noindent \textbf{Family III.} Arcs that emanate from the marked point $x$ and wrap counterclockwise around the puncture: $\underline{d} = (1,1,1, \text{type }A)$.\\

\noindent \textbf{Family IV.} Arcs going into the puncture, tagged $\underline{d}_{\Bowtie}$ and untagged $\underline{d}$ are given by: $\underline{d}_{\Bowtie} = (0,1,1, \text{type }A)$ and $\underline{d} = (1,0,1, \text{type }A)$.\\

These are all the families of arcs that cross existing arcs in the triangulation at most once, pictured in Figure \ref{fig:1Acrossonce}. 

\begin{figure}[H]
    \centering
    \includegraphics[width=\textwidth]{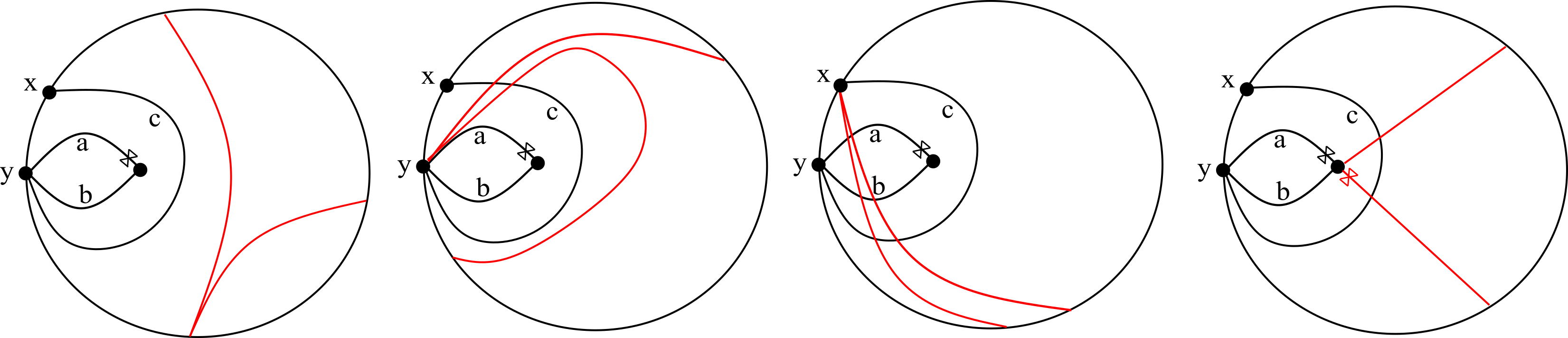}
    \caption{Examples of arcs in families for surface IA: I, II, III and IV from left to right drawn in \textcolor{red}{red}.}
    \label{fig:1Acrossonce}
\end{figure}

We now consider when arcs can cross multiple times i.e. where 2's can appear in the $\underline{d}$-vectors. Up to isotopy, the only type $D_n$ arc that can be crossed twice is $c$; namely, $a$ and $b$ cannot be crossed twice. Observe that arcs whose endpoints only involve marked points in the ``type $D_n$" part i.e. endpoints of arcs $a,b,c$ cannot cross any arc twice, so at least one of the endpoints must be involved the ``type $A_m$" part i.e. a marked point in the triangulation of the $n$-gon. In particular, any such arc must wrap counterclockwise around the puncture. This gives the last family of arcs in Type 1A: $\underline{d} = (1,1,2, \overrightarrow{\text{type }A_\alpha} + \overrightarrow{\text{type }A_\beta})$, where we split up our arc into two pieces: $\alpha$, an arc in the $n$-gon involving the crossings that appear before the first crossing of $c$ and $\beta$, an arc in the $n$-gon involving the crossings that appear after the second crossing of $c$. See Figure \ref{fig:1Acrosstwice}.

\begin{figure}[H]
    \centering
    \includegraphics[scale=.22]{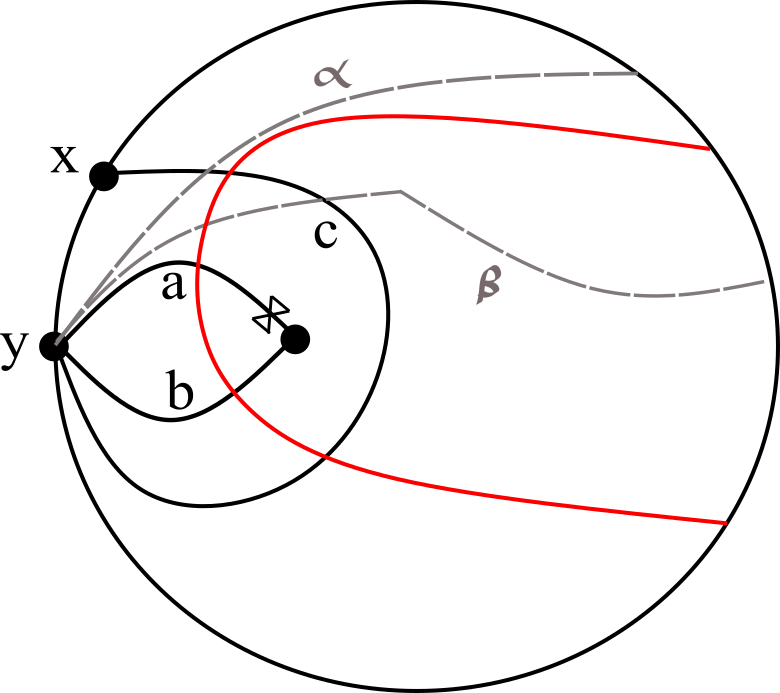}
    \caption{Example of arc in that cross twice in surface type IA drawn in \textcolor{red}{red}, where $\alpha, \beta$ components are drawn dashed in gray.}
    \label{fig:1Acrosstwice}
\end{figure}

\begin{rmk}
We note that for crossing vectors $\underline{d}$ of this format, a quiver theoretic formula for F-polynomials was explored by unpublished work of the second author and Lauren Williams where the F-polynomial was conjecturally written as a difference. The contributions being subtracted therein appear to correspond to the node polychromatic mixed dimer configurations that appear in the larger poset $\overline{P}$ defined in Definition \ref{defn:largerposet} but not poset $P$ from Definition \ref{defn:nodesposet}.
\end{rmk}

\end{subsubsection}

\begin{subsubsection}{Type IB}\label{subsection: type1c}
Let type IB be the orientation of the type I quiver where $a \to c$ and $b \to c$. We begin by categorizing the families of arcs that do not cross any arc in a triangulation twice. Namely, we analyze the possibilities for the support of $a,b,c$.\\ 

\noindent \textbf{Family I.} Any arc that does not cross the type $D_n$ part of the surface i.e. the arcs $a,b,c$ will be of the form: $\underline{d} = (0,0,0, \text{type }A)$.\\

\noindent \textbf{Family II.} Arcs that emanate from the marked point $y$ and wrap clockwise around the puncture: $\underline{d} = (0,1,1, \text{type }A)$.\\

\noindent \textbf{Family III.} Arcs that emanate from the marked point $x$ and wrap counterclockwise around the puncture: $\underline{d} = (1,0,1, \text{type }A)$.\\

\noindent \textbf{Family IV.} Arcs going into the puncture, tagged $\underline{d}_{\Bowtie}$ and untagged $\underline{d}$ that involved at least one endpoint in the ``type $A_m$" part of the surface: $\underline{d}_{\Bowtie} = (0,1,1, \text{type }A)$ and $\underline{d} = (1,0,1, \text{type }A)$.\\

We also have two additional arcs that are the tagged versions of the arcs $a,b$ which give two additional $\underline{d}$-vectors: $\underline{d}_{a\Bowtie} = (0,1,0,\dots, 0)$ and $\underline{d}_{b\Bowtie} = (1,0,\dots, 0)$.\\

These are all the families of arcs that cross existing arcs in the triangulation at most once, pictured in Figure \ref{fig:1Bcrossonce}. 

\begin{figure}[H]
    \centering
    \includegraphics[width=\textwidth]{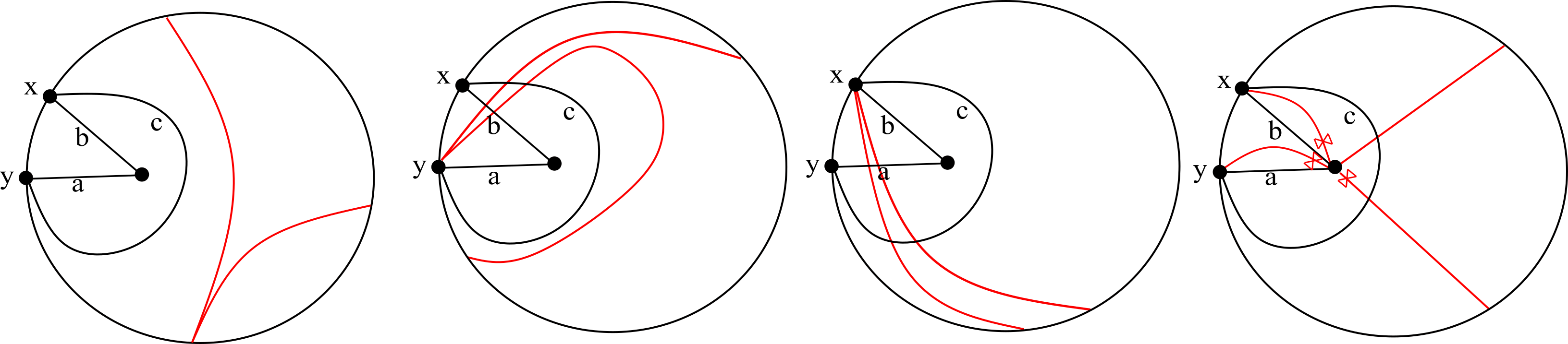}
    \caption{Examples of arcs in families for surface IB: I, II, III and IV from left to right drawn in \textcolor{red}{red}.}
    \label{fig:1Bcrossonce}
\end{figure}

We now consider when arcs can cross multiple times i.e. where 2's can appear in the $\underline{d}$-vectors. As in Type IA, we see again that the only family $\underline{d}$-vector in this case is: $\underline{d} = (1,1,2, \overrightarrow{\text{type }A_\alpha} + \overrightarrow{\text{type }A_\beta})$, where $\alpha, \beta$ are defined as in Type 1A. See Figure \ref{fig:1Bcrosstwice}.

\begin{figure}[H]
    \centering
    \includegraphics[scale=.18]{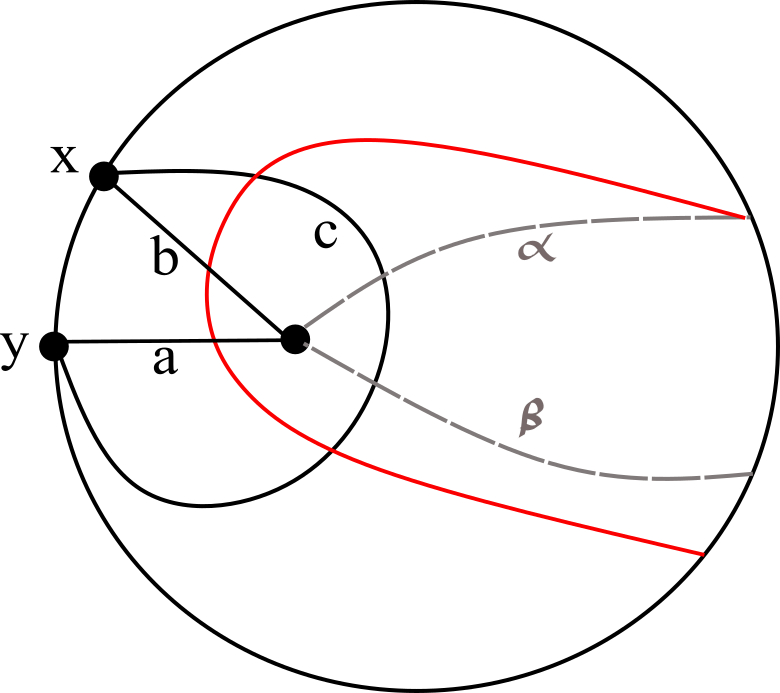}
    \caption{Example of arc in that cross twice in surface type IA drawn in \textcolor{red}{red}, where $\alpha, \beta$ components are drawn dashed in gray.}
    \label{fig:1Bcrosstwice}
\end{figure}
\end{subsubsection}

\begin{subsubsection}{Type II}
The surface model in type II has two potential type $A_m$ parts corresponding to the quivers $Q', Q''$ i.e. two triangulations of polygons $P', P''$. We begin by cataloging the families of arcs that do not cross any arc in a triangulation twice. We index the $\underline{d}$-vectors as follows: the first four coordinates in order are $a,b,c,d$, the next set of coordinates are indexed by arcs in the polygon $P'$ and the last set of coordinates are indexed by arcs in the polygon $P''$. \allowbreak
\vspace{1em}
Recall any arc that crosses an arc in the triangulation more than once or any arc that is supported on some subset of $a,b,c$, but not $d$ or some subset of $a,b,d$, but not $c$, are previously cataloged in the type I surface. So, it suffices to catalog any arcs in the type II surfaces that are supported on both $c$ and $d$. It turns out that there are only two families of arcs that are supported on both $c$ and $d$ shown in Figure \ref{fig:2allfams}. \allowbreak
\vspace{1em}

\noindent \textbf{Family I.} Arcs that start in one polygon and end in the other, wrapping clockwise around the puncture. These arcs cannot start or end at the marked points $x$ or $y$: \\ $\underline{d} = (1,1,1,1, \text{type }A , \text{type }A)$.\\

\noindent \textbf{Family II.} Arcs that start in one polygon and end in the other, wrapping counterclockwise around the puncture. These arcs cannot start or end at the marked points $x$ or $y$: $\underline{d} = (0,0,1,1, \text{type }A , \text{type }A)$.

\begin{figure}[H]
    \centering
    \includegraphics[scale=.2]{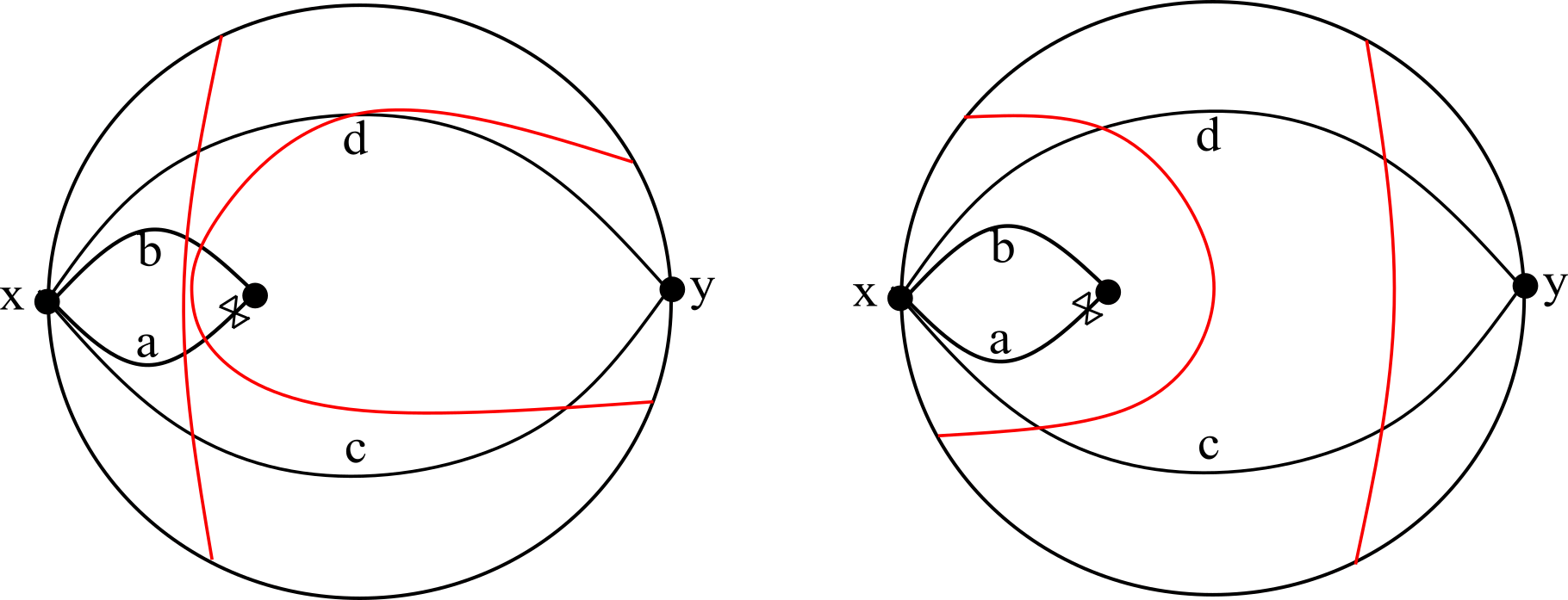}
    \caption{Examples of arcs in families I and II from left to right in the type II surface drawn in \textcolor{red}{red}.}
    \label{fig:2allfams}
\end{figure}
\end{subsubsection}

\begin{subsubsection}{Type III}
Similarly to type II, the surface model in type III has two potential type $A_m$ parts corresponding to the quivers $Q', Q''$ i.e. two triangulations of polygons $P', P''$. We begin by cataloging the families of arcs that do not cross any arc in a triangulation twice. We index the $\underline{d}$-vectors as follows: the first four coordinates in order are $a,b,c,d$, the next set of coordinates are indexed by arcs in the polygon $P'$ and the last set of coordinates are indexed by arcs in the polygon $P''$. \\

We index the $\underline{d}$-vectors as follows: the first four coordinates in order are $a,b,c,d$, the next set of coordinates are indexed by arcs in the polygon $P'$ and the last set of coordinates are indexed by arcs in the polygon $P''$. Recall that any arc that crosses an arc in the triangulation twice has already previously been cataloged in type I. Additionally, if an arc is supported on some subset of $a,b,c$, but not $d$ or some subset of $a,b,d$, but not $c$, it has also has been previously cataloged in type I. So, it suffices to catalog any arcs in the type III surfaces that are supported on both $c$ and $d$. It turns out that there are only two families of arcs that are supported on both $c$ and $d$ shown in Figure \ref{fig:3allfams}. \allowbreak
\vspace{1em}

\noindent \textbf{Family I.} Arcs that start in one polygon and end in the other, wrapping clockwise around the puncture. These arcs cannot start or end at the marked points $x$ or $y$: \\ $\underline{d} = (1,0,1,1, \text{type }A , \text{type }A)$.\\

\noindent \textbf{Family II.} Arcs that start in one polygon and end in the other, wrapping counterclockwise around the puncture. These arcs cannot start or end at the marked points $x$ or $y$: $\underline{d} = (0,1,1,1, \text{type }A , \text{type }A)$.

\begin{figure}[H]
    \centering
    \includegraphics[scale=.2]{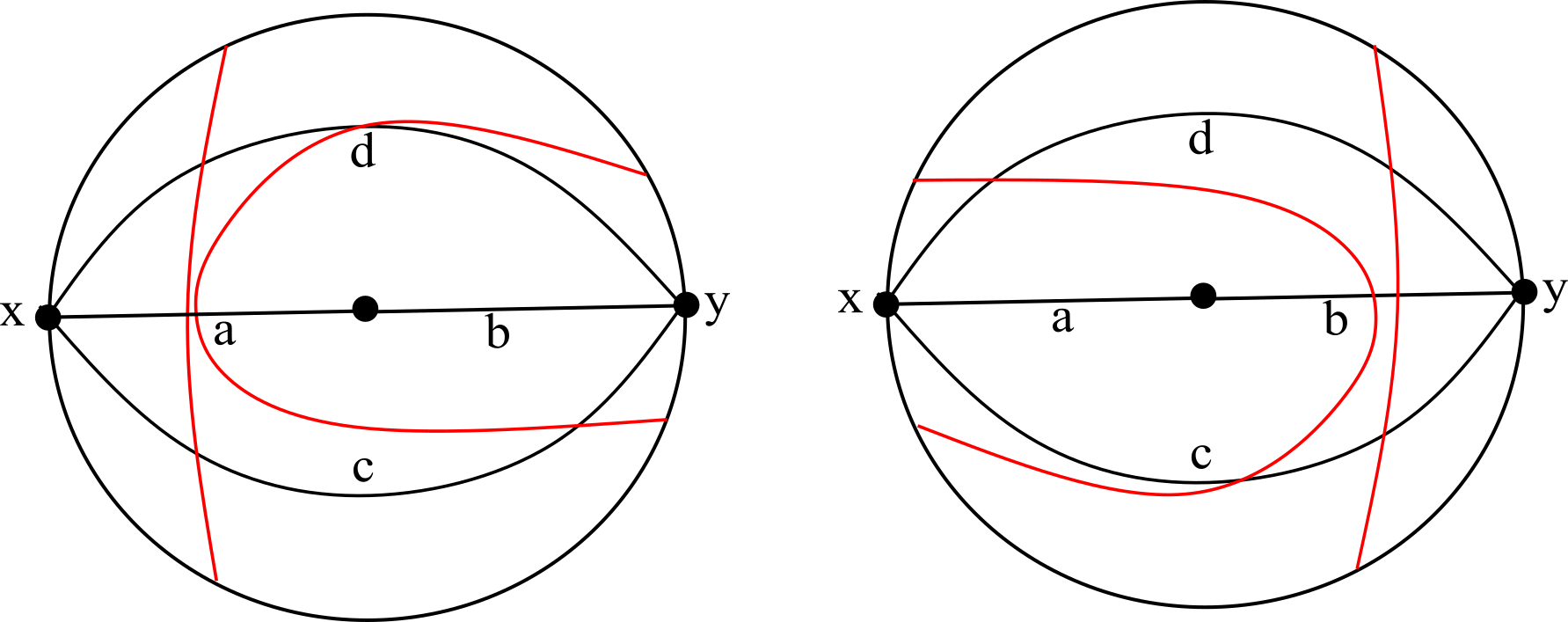}
    \caption{Examples of arcs in families I and II from left to right in the type III surface drawn in \textcolor{red}{red}.}
    \label{fig:3allfams}
\end{figure}
\end{subsubsection}

\begin{subsection}{Type IV}
The surface model in type IV has consists of a central $n$-cycle for $n \geq 3$ and can have $k \leq n$ ``spikes" i.e. $k$ attached triangulated polygons labeled $P_1, P_2, \dots, P_k$. We refer to the inner punctured disk that corresponds to the central $n$-cycle as $D$. For notational convenience, we index the $\underline{d}$-vectors based on their support and only specify where the non-zero entries occur. \\

In order to catalog arcs in a type IV surface, we need to develop more notation. Let the attached polygons of the surface be labeled in  clockwise cyclic order $P_1, P_2, \dots, P_k$. Let $a_1, a_2, \dots, a_k$ be the arcs that are shared by the punctured disk $D$ and a polygon $P_i$ - where $a_i$ is the label of the arc between $D$ and $P_i$. Let the triangles in $D$ be labeled such that in clockwise cyclic order, the triangle in $D$ using arc $a_i$ read $b_i, a_i, b_{i+1}$. Call the vertices that are the start and end of each $a_i$ ``gluing vertices."  To demonstrate this notation, see Figure \ref{fig:petalnotation}. With this notation, we are ready to catalog more families of arcs.  We begin by cataloging the families of arcs that do not cross any arc in a triangulation twice.\\

\begin{figure}[H]
    \centering
    \includegraphics[scale=.25]{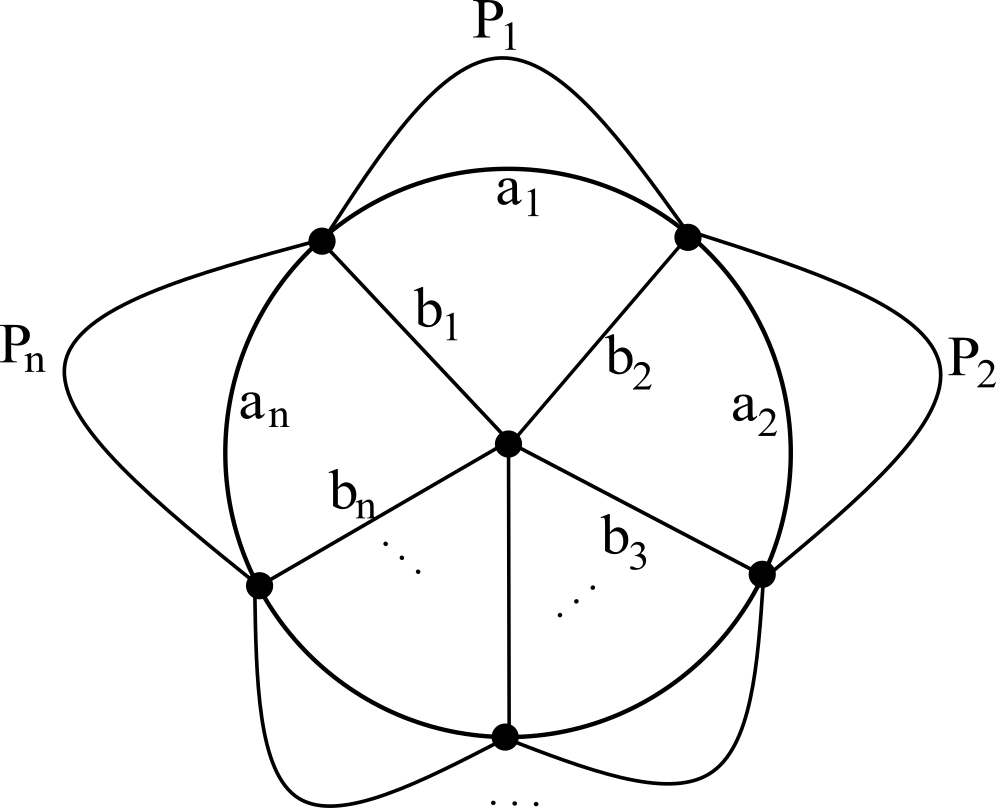}
    \caption{ }
    \label{fig:petalnotation}
\end{figure}

\noindent \textbf{Family I.} Arcs that are completely contained in a single polygon $P$: see the leftmost figure in the top row of Figure \ref{fig:4IthroughVII}.

$$\underline{d} = \begin{cases}
\text{type }A \text{ vector } &\text{if in }  P\\
0 &\text{otherwise}
\end{cases}$$

\noindent \textbf{Family II.} Arcs that start and end in different polygons $P_i, P_j$ where $i<j$ that wrap clockwise around the puncture. In addition, disallow arcs that start of end at a gluing vertex. See the second from the left figure in the top row of Figure \ref{fig:4IthroughVII}. For coordinates $\alpha$ representing an arc in the sub-triangulation of $D$, we have

$$d_{\alpha} = \begin{cases}
1 &\text{if } \alpha = b_{i+1}, b_{i+2}, \dots, b_j~\text{or } \alpha = a_i \text{ or } a_j\\
0 &\text{otherwise}
\end{cases}$$

\noindent For coordinates $\beta$ representing an arc in the sub-triangulation of one of the petal polygons, we have
$$d_{\beta} = \begin{cases}
\text{type } A \text{ vector } &\text{if } \beta \text{ in } P_i \text{ or } P_j\\
0 &\text{otherwise}
\end{cases}$$

\noindent \textbf{Family III.} Arcs that start and end in different polygons $P_i, P_j$ where $i<j$ that wrap counterclockwise around the puncture. In addition, disallow arcs that start of end at a gluing vertex. See the third from the left figure in the top row of Figure \ref{fig:4IthroughVII}. For coordinates $\alpha$ representing an arc in the sub-triangulation of $D$, we have

$$d_{\alpha} = \begin{cases}
1 &\text{if } \alpha = b_{1}, \dots, b_i, b_{j+1}, \dots b_k ~\text{or } \alpha = a_i \text{ or } a_j\\
0 &\text{otherwise}
\end{cases}$$
\noindent For coordinates $\beta$ representing an arc in the sub-triangulation of one of the petal polygons, we have
$$d_{\beta} = \begin{cases}
\text{type } A \text{ vector } &\text{if } \beta \text{ in } P_i \text{ or } P_j\\
0 &\text{otherwise}
\end{cases}$$

\noindent \textbf{Family IV.} Arcs that begin at a gluing vertex on $P_i$ and wrap counterclockwise around the puncture ending in a non-gluing vertex in $P_j$. See the rightmost figure in the top row of Figure \ref{fig:4IthroughVII}. For coordinates $\alpha$ representing an arc in the sub-triangulation of $D$, we have

$$d_{\alpha} = \begin{cases}
1 &\text{if } \alpha = b_1, \dots, b_{i-1},b_{j+1}, \dots, b_k~\text{or } \alpha = a_j \\
0 &\text{otherwise}
\end{cases}$$
\noindent For coordinates $\beta$ representing an arc in the sub-triangulation of one of the petal polygons, we have
$$d_{\beta} = \begin{cases}
\text{type } A \text{ vector } &\text{if } \beta \text{ in } P_j\\
0 &\text{otherwise}
\end{cases}$$
\noindent If such an arc began at the other gluing vertex on $P_i$, the support of $\underline{d}$ includes a 1 on the index corresponding to the arc $b_i$.\\

\noindent \textbf{Family V.} Arcs that begin at a gluing vertex on $P_i$ and wrap clockwise around the puncture ending in a non-gluing vertex in $P_j$. See the leftmost figure in the bottom row of Figure \ref{fig:4IthroughVII}. For coordinates $\alpha$ representing an arc in the sub-triangulation of $D$, we have

$$d_{\alpha} = \begin{cases}
1 &\text{if } \alpha = b_{i+1}, \dots, b_{j}~\text{or } \alpha = a_j \\
0 &\text{otherwise}
\end{cases}$$
\noindent For coordinates $\beta$ representing an arc in the sub-triangulation of one of the petal polygons, we have
$$d_{\beta} = \begin{cases}
\text{type } A \text{ vector } &\text{if } \beta \text{ in } P_j\\
0 &\text{otherwise}
\end{cases}$$
\noindent If such an arc began at the other gluing vertex on $P_i$, then $b_{i+1}=0$ rather than 1. \\

\noindent \textbf{Family VI.} Arcs that begin at the gluing vertex between $P_i$ and $P_{i-1}$ on $P_i$ and traverse clockwise around the puncture ending in $P_i$. See the center figure on the bottom row of Figure \ref{fig:4IthroughVII}.

$$d_{\alpha} = \begin{cases}
1 &\text{if } \alpha \neq b_{i+1}, \text{ or } \alpha = a_i \\
0 &\text{otherwise}
\end{cases} \hspace{3em} d_{\beta} =\begin{cases}
\overrightarrow{\text{type }A} &\text{if } \beta \text{ in } P_i\\
0 &\text{otherwise}
\end{cases}$$

\noindent \textbf{Family VII.} Similarly to Family VI, arcs that begin at the gluing vertex between $P_i$ and $P_{i+1}$ on $P_i$ and traverse counterclockwise around the puncture ending in $P_i$. See the rightmost figure on the bottom row of Figure \ref{fig:4IthroughVII}.

$$d_{\alpha} = \begin{cases}
1 &\text{if } \alpha \neq b_i, \text{ or } \alpha = a_i \\
0 &\text{otherwise}
\end{cases} \hspace{3em} d_{\beta} =\begin{cases}
\overrightarrow{\text{type }A} &\text{if } \beta \text{ in } P_i\\
0 &\text{otherwise}
\end{cases}$$

\begin{rmk}
Families I through VII are all shown in Figure \ref{fig:4IthroughVII}.
\end{rmk}

\begin{figure}[H]
    \centering
    \includegraphics[width=\textwidth]{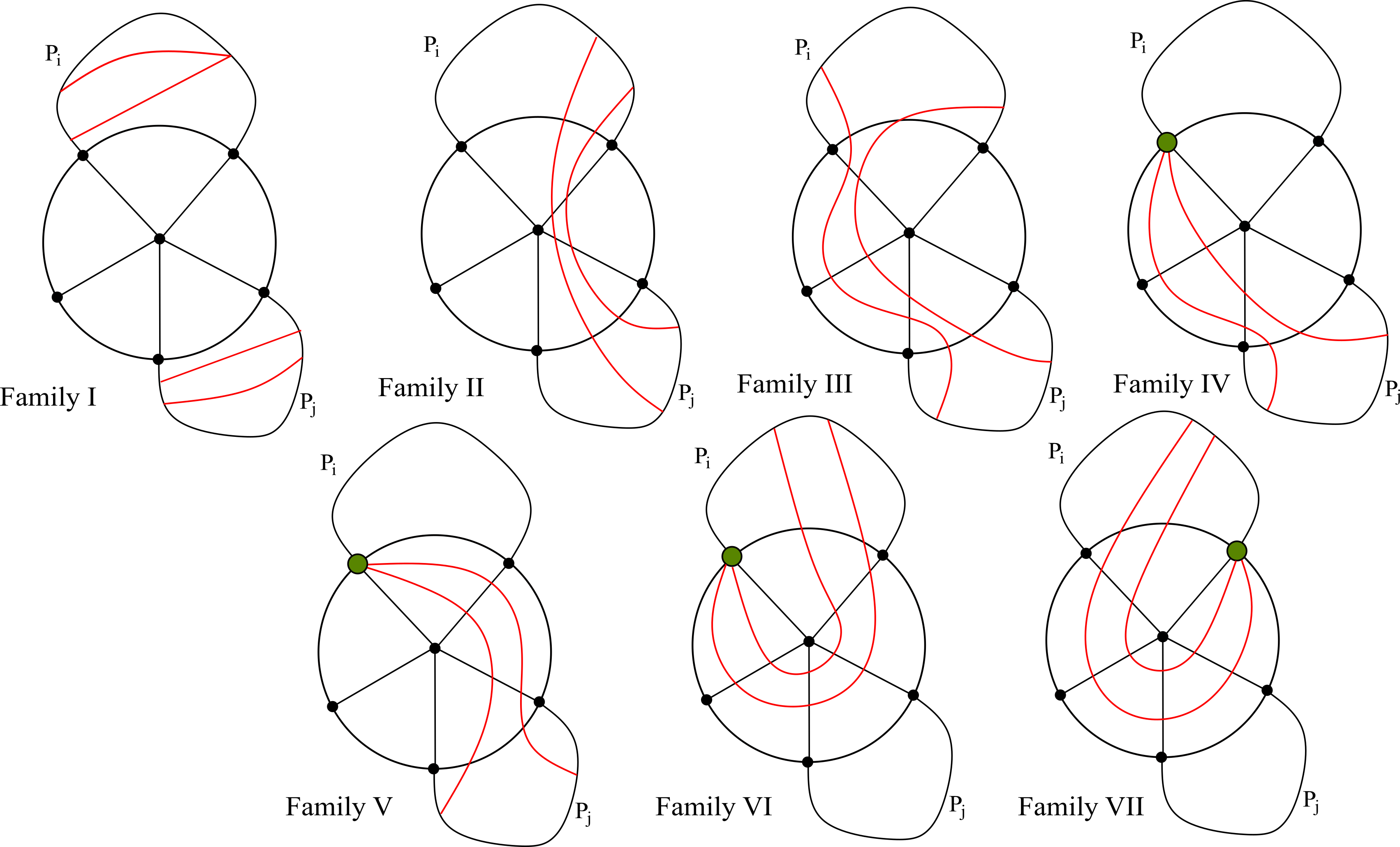}
    \caption{Families I through VII shown in order left to right in the first row and left to right in the second row. Examples of arcs in these families are drawn in \textcolor{red}{red} and the gluing vertices are shown in \textcolor{olive}{green}.}
    \label{fig:4IthroughVII}
\end{figure}

\noindent \textbf{Family VIII.} Arcs and their tagged versions that are coming into the puncture from a non-gluing vertex from polygon $P_i$. See the leftmost figure in Figure \ref{fig:4VIIIthroughend}.

$$d_{k} = \begin{cases}
1 &\text{if } k = a_i\\
\overrightarrow{\text{type }A} &\text{if } k \text{ in } P_i\\
0 &\text{otherwise}
\end{cases} \hspace{1.5em}; \hspace{1.5em} d_{\Bowtie_k} =\begin{cases}
1 &\text{if } k = b_1, \dots, b_n, \text{ or } k = a_i \\
\overrightarrow{\text{type }A} &\text{if } k \text{ in } P_i\\
0 &\text{otherwise}
\end{cases}$$

\noindent \textbf{Family IX.} Tagged arcs coming into the puncture from gluing vertices. See the second from the left figure in Figure \ref{fig:4VIIIthroughend}.

$$d_{\Bowtie_k} =\begin{cases}
1 &\text{if } k \neq b_i \text{ in the wheel} \\
0 &\text{otherwise}
\end{cases}$$

\noindent \textbf{Family X.} Arcs that start and end at gluing vertices must stay inside $D$ up to isotopy. See the third from the left figure in Figure \ref{fig:4VIIIthroughend}. This gives for some $\ell,m \in \{1,2, \dots, k\}$,

$$d_{\alpha} = \begin{cases}
1 &\text{if } \alpha = b_{\ell}, \dots, b_{m}\\
0 &\text{otherwise}
\end{cases}$$

We can now consider when arcs cross arcs in the triangulation more than once. The only way this can happen is if the arcs starts and ends in the same polygon. Notably, neither the start or end point can be a gluing vertex. This yields one final family of arcs:\\

\noindent \textbf{Family XI.} Arcs that begin in polygon $P_i$ wrap around the puncture then return to $P_i$. See the rightmost figure in Figure \ref{fig:4VIIIthroughend}. For coordinates $\alpha$ representing an arc in the sub-triangulation of $D$, we have

$$d_{\alpha} = \begin{cases}
1 &\text{if } \alpha =b_1, \dots, b_k \\
2 &\text{if } \alpha = a_i\\
0 &\text{otherwise}
\end{cases}$$
\noindent For coordinates $\beta$ representing an arc in the sub-triangulation of $P_i$, we have
$$d_{\beta} =\begin{cases}
\overrightarrow{\text{type }A} &\text{if } \beta \text{ in } P_i\\
0 &\text{otherwise}
\end{cases}$$

\begin{figure}[H]
    \centering
    \includegraphics[width=\textwidth]{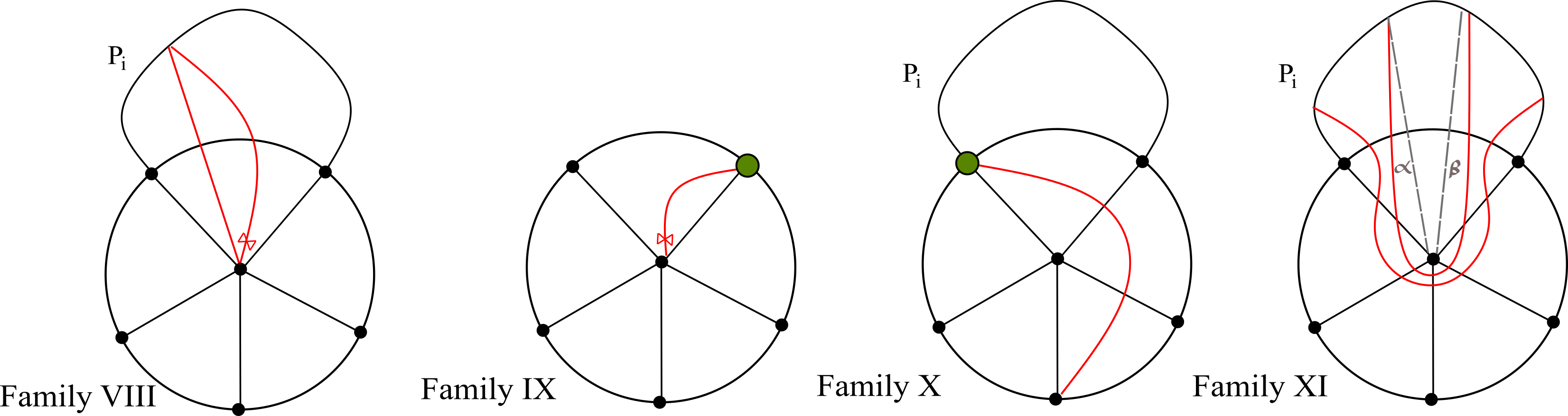}
    \caption{Families VIII through XI shown in order left to right. Examples of arcs in these families are drawn in \textcolor{red}{red} and the gluing vertices are shown in \textcolor{olive}{green}.}
    \label{fig:4VIIIthroughend}
\end{figure}

\end{subsection}
\end{section}

\begin{section}{Proofs of Lemmas}\label{section:topologylemmas}

Now that we have categorized all of the possible $\underline{d}-$vectors, we first prove the lemmas used in Definition \ref{defn: minimalmatching} as well as make some observations about how the structure of these vectors streamlines some of our case work. \allowbreak
\vspace{1em}

\noindent \textbf{Lemma 3.1.} Suppose $\gamma$ is some arc not in $T$ and let $\underline{d} = \text{cross}(\gamma)$. $Q^{\supp(\underline{d})}$, the induced subquiver using vertices $i \in Q_0$ with $d_i > 0$, is connected.\allowbreak
\vspace{1em}

\noindent \textbf{Lemma 3.2.} Suppose $\gamma$ is some arc not in $T$ such that there exists some arc $\tau \in T$ that $\gamma$ crosses twice. Let $\underline{d} = \text{cross}(\gamma)$. $Q^{\supp_2(\underline{d})}$, the induced subquiver using vertices $i \in Q_0$ with $d_i = 2$, is a connected tree.

\begin{proof}
We aim to prove Lemma \ref{lemma:d_vector_supp_connected}. Suppose that $\underline{d} = \text{cross}(\gamma) = (\gamma_{t_1}, \gamma_{t_2}, \dots, \gamma_{t_\ell})$ and suppose that $Q^{\text{supp}}$ has two connected components $R,R'$. Without loss of generality, suppose that the vertices of the quiver $R$ correspond to the section of $\gamma$ on the interval $(t_1,t_p)$ and the vertices of the quiver $R'$ correspond to the section of $\gamma$ on the interval $(t_{p'},t_\ell)$ for $1 \leq p < p' \leq \ell$. As $R \cap R' = \emptyset$, $r \neq r'$ implying that $p \neq p'$. If $r,r'$ are two sides of the same triangle, there is an arrow in $Q$ connecting these two vertices. Since $r,r' \in Q^{\supp}$, this would imply $Q^{\supp}$ must be connected in this case. If $r,r'$ are not in any triangle together, then there exists a sequences of arcs in $T$ corresponding to $\gamma_{t_p} < \gamma_{t_{p+1}} < \dots < \gamma_{t_{p'}}$. The continuity of the image of $\gamma$ guarantees there is a sequence of arrows in $Q$ connecting $r$ to $r'$ implying that $Q^{\supp}$ is connected. 
\end{proof}

\begin{proof}
We aim to prove Lemma \ref{lemma:d_vector_2}. Suppose $\gamma$ is an arc that crosses some arc in the triangulation twice. Without loss of generality, $\gamma$ is type IA or B Family V or type IV Family XI (note that type II and III degenerate to the type IV case). In any case, let $\tau_c$ be the arc in the triangulation that connects the type $D_n$ part of the surface to the type $A_m$ part of the surface that has both of $\gamma$'s endpoints. By the classification of $\underline{d}$-vectors, we know that $\tau_c$ must be crossed twice. If $\gamma$ crosses no other arcs twice, then we are done and $Q^{\supp_2}$ is a connected tree with a single vertex. If $\gamma$ crosses another arc in the triangulation twice, then the segments of the arc $\alpha$ and $\beta$ must both cross this arc. Namely, by the symmetry of $\alpha, \beta$ such an arc must be attached by a sequence of of arcs $\tau_1, \dots, \tau_r$ such that some $\tau_i$ and $\tau_c$ are in the same triangle. Therefore, the vertices $1, \dots, r, c$ must be connected. \allowbreak
\vspace{1em}
We now argue that $Q^{\supp_2}$ cannot have any oriented cycles. Note that in order for $Q^{\supp_2}$ to have a cycle in the type $A_m$ part of the quiver, $\gamma$ must cross all arcs of an internal triangle twice. However, it is impossible up to isotopy for an arc to cross more than one arc of an internal twice. In the type $D_n$ part of the surface, we saw that in types II and III, a 2 can only appear on one of vertices in $\{a,b,c,d\}$. In type IV, the type $D_n$ part of the surface that contains a cycle is the wheel triangulation. Up to isotopy, there is no way to cross the spokes of the wheel twice which implies that we cannot have 2's form a cycle on this part of the surface. Hence, the subquiver $Q^{\supp_2}$ is 2's form a connected tree as desired.
\end{proof}
\end{section}
\end{appendix}

\newpage

\bibliographystyle{alpha}
\bibliography{bibliography}
\end{document}